\DeclareMathOperator{\Arg}{Arg}
\DeclareMathOperator{\Ima}{Im}
\DeclareMathOperator{\Rea}{Re}
\DeclareMathOperator{\inte}{int}
\DeclareMathOperator{\ext}{ext}
\newcommand\mps[1]{\marginpar{\small\sf#1}}
\newcommand{\GK}{\mathbb{K}}
\newcommand{\cE}{\mathcal E}
\newcommand{\cF}{\mathcal F}
\newcommand{\R}{\mathbb R}
\newcommand{\Q}{\mathbb Q}
\newcommand{\C}{\mathbb C}
\newcommand{\N}{\mathbb N}
\newcommand{\Rc}{\mathcal{R}}
\newcommand{\pd}[2]{\frac{\partial#1}{\partial#2}}
\newtheorem{prop}{Proposition}[section]
\newtheorem{lem}[prop]{Lemma}
\newtheorem{defi}[prop]{Definition}
\newtheorem{theo}[prop]{Theorem}
\newtheorem{cor}[prop]{Corollary}
\def\emm#1,{{\em #1}}
\newcommand{\beq}{\begin{equation}}
\newcommand{\eeq}{\end{equation}}
\newcommand{\gf}{generating function}
\newcommand{\gfs}{generating functions}
\newcommand{\E}{\mathbb{E}}
\newcommand{\PP}{\mathbb P}
\newcommand{\vareps} {\varepsilon}
\newcommand{\qs}{{\mathbb Q}}  
\newcommand{\cs}{{\mathbb C}} 
\DeclareMathOperator{\cc}{c}
\DeclareMathOperator{\vv}{v}
\DeclareMathOperator{\ff}{f}
\DeclareMathOperator{\ee}{e}
\DeclareMathOperator{\Tpol}{T}
\DeclareMathOperator{\Ppol}{P}
\newcommand{\Dc}{\delta}
\newcommand{\bu}{\bar u}
\newcommand{\bz}{\bar z}
\newcommand{\rt}{\tilde \rho}
\newcommand{\rh}{\hat \rho}
\def\section{\@startsection{section}{1}%
 \z@{.7\linespacing\@plus\linespacing}{.5\linespacing}%
 {\normalfont\bfseries\scshape\centering}}
\def\subsection{\@startsection{subsection}{2}%
  \z@{.5\linespacing\@plus\linespacing}{.5\linespacing}%
  {\normalfont\bfseries\scshape}}
\def\subsubsection{\@startsection{subsubsection}{3}%
 \z@{.5\linespacing\@plus\linespacing}{-.5em}
  {\normalfont\bfseries\itshape}}
\title{Spanning forests in regular planar maps} 
\author[M. Bousquet-M\'elou]{Mireille Bousquet-M\'elou}
\author[J. Courtiel]{Julien Courtiel}
\thanks{Both authors were supported by  the French ``Agence Nationale
de la Recherche'', project A3 ANR-08-BLAN-0190. MBM also acknowledges
the hospitality of the Institute of Computer Science, 
Universit\"at Leipzig, where part of this work was carried out.}
\address{MBM \& JC:  CNRS, LaBRI, UMR 5800, Universit\'e de Bordeaux, 
351 cours de la Lib\'eration, 33405 Talence Cedex, France}
\email{bousquet@labri.fr, jcourtie@labri.fr}
\begin{document}
\begin{abstract}
We address the enumeration of $p$-valent planar maps equipped with a
  spanning forest, with a weight $z$ per face and a weight $u$ per connected
  component of the forest. Equivalently, we count $p$-valent maps
  equipped with a spanning \emm tree,, with a weight $z$ per face and a
  weight $\mu:=u+1$ per \emm internally active, edge, in the sense of
  Tutte; or  the (dual) $p$-angulations equipped with a \emm recurrent 
  sandpile configuration,, with a weight  $z$ per vertex and a
  variable $\mu:=u+1$ that keeps track of the \emm level, of the configuration.
 This enumeration problem also corresponds to
  the limit $q\rightarrow 0$ of the $q$-state Potts model on
   $p$-angulations.

Our approach is purely combinatorial. The associated \gf, denoted $F(z,u)$, is expressed in terms of
a pair of series defined  implicitly by a system involving doubly
hypergeometric series.  We derive from this system that $F(z,u)$ is
\emm differentially algebraic, in $z$, that is, satisfies a differential
equation in $z$ with polynomial coefficients in $z$ and $u$. This has
recently been proved to hold for the more general Potts model on 3-valent
maps, but via a much more involved and less combinatorial proof.

For $u\ge -1$, we study the singularities of $F(z,u)$ and the
corresponding asymptotic behaviour of its $n$th coefficient. For
$u>0$, we find the standard asymptotic behaviour of planar maps, with a
subexponential term in $n^{-5/2}$. At $u=0$ we witness a phase
transition with a term $n^{-3}$. When $u\in[-1,0)$, we obtain an
  extremely unusual behaviour in $n^{-3}(\ln n)^{-2}$. To our
  knowledge, this is a new ``universality class'' for planar maps.
\end{abstract}
\maketitle

\section{Introduction}
A \emm planar map, is a proper embedding of a connected graph in the
sphere. The enumeration of planar maps has received a continuous
attention in the past 60 years, first in combinatorics with the
pionneering work of Tutte~\cite{tutte-census-maps}, then  in
theoretical physics~\cite{BIPZ}, where maps are considered as random surfaces
modelling the effect of \emm quantum gravity,, and more recently in
probability theory~\cite{le-gall-miermont,marckert-miermont}.
General planar maps have been studied, as well as sub-families
obtained by imposing  constraints of higher connectivity, or
prescribing the degrees of vertices or faces
({\it e.g.}, triangulations). Precise definitions are given below.

Several robust enumeration methods have been designed, from Tutte's recursive
approach (e.g.~\cite{tutte-triangulations}), which leads to functional equations for the \gfs\ of maps, to
the beautiful bijections initiated by Schaeffer~\cite{Sch97},
and further developed by physicists and combinatorics alike 
~\cite{bernardi-fusy,BDG-blocked}, via the
powerful approach based on matrix integrals~\cite{DFGZJ}. See for
instance~\cite{mbm-survey} for a more complete (though non-exhaustive) bibliography.

Beyond the enumerative and asymptotic properties of  planar maps,
which are now 
well understood,  the attention has also focussed on two more
general questions: maps on higher genus
surfaces~\cite{bender-surface-I,chapuy-marcus-schaeffer}, 
and maps equipped
with an additional structure. The latter question is particularly
relevant in physics, where a surface on which nothing happens (``pure
gravity'') is of little interest. For instance, one has studied maps
equipped with a polymer~\cite{DK88}, with an Ising
model~\cite{BDG-blocked,Ka86,BK87,mbm-schaeffer-ising} or more generally a Potts model, with a proper
colouring~\cite{lambda12,tutte-differential}, with loops
models~\cite{borot1,borot2}, with a spanning tree~\cite{mullin-boisees}, or percolation on planar maps~\cite{angel-perco,bernardi-perco}.

In particular, several papers have been devoted in the
past 20 years to the
study of the Potts model on families of planar maps~\cite{baxter-dichromatic,borot3,daul,eynard-bonnet-potts,guionnet-jones,zinn-justin-dilute-potts}. In combinatorial
terms, this means counting maps equipped with a colouring in $q$
colours, according to the size (\emm e.g.,, the number of edges) and the
number of \emm monochromatic edges, (edges whose endpoints have the
same colour). Up to a change of variables, this also means counting
maps weighted by their \emm Tutte polynomial,, a bivariate
combinatorial invariant which has numerous interesting
specializations. By generalizing  Tutte's formidable solution of
properly coloured triangulations (1973-1982), it has recently been
proved that the Potts \gf\ is \emm differentially algebraic,, that is,
satisfies a (non-linear) differential equation\footnote{with respect
  to the size variable} with polynomial
coefficients~\cite{bernardi-mbm,bernardi-mbm-de,mbm-survey}. This
holds at least for general planar maps and 
for triangulations (or dualy, for cubic maps).

The method that yields these differential equations is extremely
involved, and does not shed much light on the structure of $q$-coloured
maps. Moreover, one has not been able, so far, to derive from these
equations the asymptotic behaviour of the number of coloured maps, nor
the location of phase transitions.

The aim of this paper is to remedy these problems --- so far for a
one-variable specialization of the Tutte polynomial. This
specialization is obtained by setting to $1$ one of the variables, or by taking (in an adequate way) the limit 
$q\rightarrow 0$ in the Potts model. Combinatorially, we are simply
counting maps (in this paper, $p$-valent maps) equipped with a
spanning forest. We call them \emm forested maps,. This problem has
already been studied in~\cite{sportiello} via a random matrix
approach, but with no explicit solution. The \gf\ $F(z,u)$ that we obtain
keeps track of the \emm size, of the map (the number of faces;
variable $z$) and of the number of trees in the
forest (minus one; variable $u$). The specialization $u=0$ thus
counts maps equipped with a spanning \emm tree, and was determined  a
long time ago by Mullin~\cite{mullin-boisees}.  

\medskip
Here is  an outline of the paper. We begin in Section~\ref{sec:prelim} with general definitions on maps,
and on the Tutte polynomial. We recall some of its combinatorial
descriptions, and underline in particular that the series $F(z,
\mu-1)$, once expanded in powers of $z$ and
$\mu$, has non-negative coefficients and admits several
combinatorial interpretations. This important observation implies that the natural domain
of the parameter $u$ is $[-1, +\infty)$ rather than $[0, +\infty)$. 
  In Section~\ref{sec:eq}, we obtain in a purely combinatorial
manner an expression of $F(z,u)$ in terms of the solution of a  system
of two functional equations. 
In Section~\ref{sec:de} we  derive from this system that $F(z,u)$ is
differentially algebraic in $z$, and give  explicit differential equations
for cubic  ($p=3$) and 4-valent ($p=4$) maps. Section~\ref{sec:u+1} is a
combinatorial interlude explaining why all series occurring in our
 equations, like $F(z,u)$ itself,  still have non-negative
coefficients when $u \in [-1,0]$. 

The rest of the paper is devoted to asymptotic results, still for
$p=3$ and $p=4$:
when $u>0$, 
forested maps follow the standard asymptotic  
behaviour of planar maps ($ \mu^n n^{-5/2}$) but then there is a phase
transition at $u=0$ (where one counts maps equipped with a spanning
tree), and a very unusual asymptotic behaviour in $ \mu^n n^{-3}(\ln
n)^{-2}$ holds when $u\in[-1,0)$. To our knowledge, this is
  the first time a class of planar maps exhibits this asymptotic
  behaviour. This proves in particular
that $F(z,u)$ is not \emm D-finite,, that is, does not satisfy any
\emm linear, differential equation in 
$z$ for these values of $u$ (nor for a generic value of $u$). This is
in contrast with the case $u=0$, for which the \gf\ of maps equipped
with a spanning forest is known to be D-finite.

Our key tool is the \emm singularity analysis,
of~\cite{flajolet-sedgewick}: its basic principle is to derive the
asymptotic behaviour of 
the coefficients of a series $F(z)$ from the  singular behaviour of $F$
 near its \emm dominant singularities, (\emm i.e.,, singularities of
minimal modulus). The first case we study (4-valent maps with $u>0$)
is simple: first, one of the two series involved in our system vanishes; the remaining one, denoted $R$, satisfies  an inversion equation $\Omega(R(z))=z$
for which the (unique) dominant singularity $\rho$ of $R$
is such that $R(\rho)$ lies in the domain
of analyticity of $\Omega$. 
One obtains for $R$ a ``standard'' square root singularity. This is
well understood and almost routine. Two ingredients make the other
cases significantly harder: 
\begin{itemize}
\item  when $u<0$,  $R(\rho)$  is a singularity of $\Omega$,
\item when $p=3$ (cubic maps) we have to deal with a system of two
  equations;  the analysis of systems  is 
  delicate, even in the so-called \emm positive case,, which corresponds in
  our context to $u>0$  (see~\cite{drmota-systems,burris}).
\end{itemize}
These difficulties, which culminate when $p=3$ and $u<0$, are addressed in  Sections~\ref{sec:general}
and~\ref{sec:inversion}. Section~\ref{sec:general} establishes general results on
 implicitly defined series. Section~\ref{sec:inversion} focusses on
 the inversion equation $\Omega(R(z))=z$ in the case where (up to
 translation) $\Omega$ has a  $z \ln z$ singularity  at $0$. One
 then  applies these results to the asymptotic analysis of forested maps in
Sections~\ref{sec:asympt-4} (4-valent maps) and~\ref{sec:asympt-3} (cubic maps). Section~
\ref{sec:random} exploits the results of Section~\ref{sec:asympt-4} to
study some properties of large \emm random,  maps equipped with a spanning
forest or a spanning tree. 

We conclude in Section~\ref{sec:final} with a few comments.

\section{Preliminaries}
\label{sec:prelim}
\subsection{Planar maps}
A \textit{planar map} is a proper embedding of a connected graph (possibly
with loops and multiple edges) in the oriented sphere, considered up
to continuous deformation. All maps in this paper are planar, and we
often omit the term ``planar''. A \textit{face} is a (topological)
connected component of the complement of the embedded graph. Each edge
consists of two  \textit{half-edges}, each incident to
an endpoint of the edge. A \emph{corner} is an ordered pair  $(e_1,e_2)$
of
 half-edges incident to the same vertex, such that $e_2$ immediately
 follows $e_1$ in counterclockwise order. The
\emph{degree} of a vertex or a face is the number of corners incident
to it. A vertex of degree $p$ is called
\emm $p$-valent,. One-valent vertices are also called \emm leaves,. A
map is \emph{$p$-valent} if all vertices are 
$p$-valent. A \emph{rooted map} is a map with a marked corner
$(e_1,e_2)$, called the 
\emm root, and indicated by an arrow in our figures. The \emph{root vertex} is the vertex incident to the root. The
\emph{root half-edge}  is $e_2$ and the \emm root edge, is the edge
supporting $e_2$.  This way of rooting maps is equivalent to the more
standard
 way where one  marks the root edge and orients it from
$e_2$ to its other half-edge. All maps of the paper are rooted, and we
often omit the term ``rooted''.
The \emph{dual} of a
map $M$, denoted $M^*$, is the map obtained by placing a 
vertex of $M^*$ in each face of $M$ and an edge of $M^*$ across each
edge of $M$; see Figure~\ref{fig:dual}(a). The dual of a
$p$-valent map  is a map with all faces of degree $p$, also called
\emm $p$-angulation,.

\begin{figure}[thb]
(a)\ \begin{tabular}{c}{\includegraphics[scale=0.8]{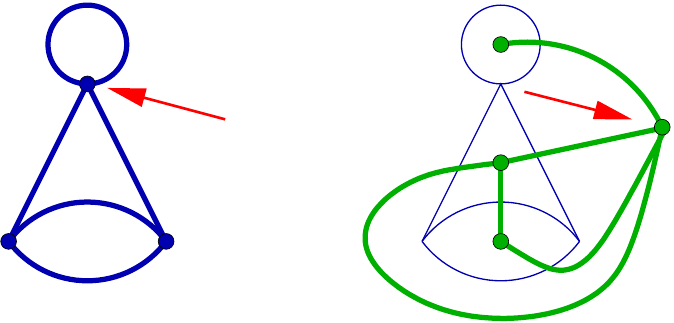}}\end{tabular}
\hskip 15mm \ \ \
(b)\ \begin{tabular}{c}{\includegraphics[scale=1.2]{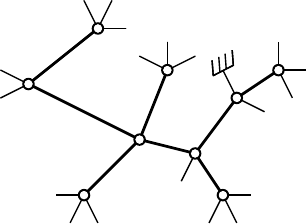}}\end{tabular}
\caption{ (a) A rooted planar map and its dual (rooted at
  the dual corner). (b) A 4-valent leaf-rooted tree.}
\label{fig:dual}
\end{figure}

 A \emph{(plane) tree} is a planar map with a unique face.
A tree is $p$-valent if all non-leaf vertices have degree $p$.
We  consider  the edges leading to the leaves as half-edges, as
suggested by Figure~\ref{fig:dual}(b). 
A \emph{leaf-rooted} tree (resp. \emph{corner-rooted}) is a tree
with a marked leaf (resp. corner). The number of $p$-valent
leaf-rooted (resp. corner-rooted) trees with $k$ leaves is denoted by
$t_k$ (resp. $t^c_k$) (the notation should be $t_{k,p}$ and
$t_{k,p}^c$, but we consider $p$ as a fixed integer, $p\ge 3$). These numbers are
well-known~\cite[Thm.~5.3.10]{stanley-vol-2}: they are $0$ unless  $k =
(p-2)\ell+2$ with $\ell\ge 1$, and in this case,
\beq
\label{deftk}
 t_k =  \frac {((p-1)\ell)!}
     {\ell!((p-2)\ell+1)!}
\quad\quad \hbox{and}\quad\quad
t^c_k =  p \frac {((p-1)\ell)!}{(\ell-1)!((p-2)\ell+2)!}.
\eeq

Let $M$ be a rooted planar map with vertex set $V$. A \emph{spanning forest}
of $M$ is a graph $F=(V,E)$
where $E$ is a subset of edges of $M$ forming no cycle. Each connected
component of $F$ is a tree, and  the \textit{root component} is
the tree containing the root vertex.
We say that the pair $(M,F)$ is a  \emph{forested map}.
We denote by $F(z,u)$ the generating function of
{$p$-valent forested maps}, counted by  faces
(variable $z$) and  non-root components (variable  $u$):
\beq\label{F-def}
F(z,u)=\sum_{M \ p-{\rm{\small valent}} \atop F\ {\rm{\small{spanning
        \ forest}}}}
z^{\ff(M)} u^{\cc(F)-1},
\eeq
where  $\ff(.)$ denotes the number of faces and $\cc(.)$  the number of components.
When $p=3$,
\beq\label{F-3}
F(z,u) = \left( 6 + 4\,u \right) {z}^{3}+ \left( 140 + 234\,u +
  144\,{u}^{2}+32\,{u}^{3} \right) {z}^{4} +  O(z^5).
\eeq
The coefficient $(6+4u)$ means that there are 10 trivalent (or \emm cubic,)
forested maps with 3 faces: 6 in which the forest is a tree, and 4 in
which it has two components (Figure~\ref{fig:tenmaps}).

\begin{figure}[thb]
{\includegraphics[scale=1.2]{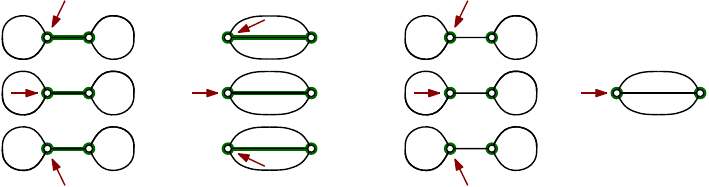}}
\caption{The 10 forested cubic maps with 3 faces.}
\label{fig:tenmaps}
\end{figure}

\subsection{Forest counting,  the Tutte polynomial, and related models}
\label{sec:tutte}
Let $G=(V,E)$ be a graph with vertex set $V$ and edge set $E$.
The \emm Tutte polynomial,
of $G$ is the following polynomial in two indeterminates
(see \emm e.g., \cite{Bollobas:Tutte-poly}):
\beq\label{Tutte-def}
\Tpol_G(\mu,\nu):=\sum_{S\subseteq
  E}(\mu-1)^{\cc(S)-\cc(G)}(\nu-1)^{\ee(S)+\cc(S)-\vv(G)},
\eeq
where the sum is over all spanning subgraphs of $G$ (equivalently,
over all subsets $S$ of edges) and  $\vv(.)$, $\ee(.)$ and $\cc(.)$ denote
respectively the number of vertices, edges and connected
components. 
The quantity $\ee(S)+\cc(S)-\vv(G)$ is the \emm cyclomatic
number of $S$,, that is, the minimal number of edges one has to delete
from $S$ to obtain a forest.

When $\nu=1$,  the only subgraphs that contribute to~\eqref{Tutte-def}
are the forests.
Hence the  \gf\ of forested maps defined by~\eqref{F-def} can
be written as
\beq\label{F-Tutte}
F(z,u)=\sum_{M \ p-\hbox{\small valent}} z^{\ff(M)} \Tpol_M(u+1,1).
\eeq
Note that we write
$\Tpol_M$ although the value of the Tutte polynomial only depends on
the underlying graph of $M$, not on the embedding.

Even though this is not clear from~\eqref{Tutte-def}, the 
polynomial $\Tpol_G(\mu,\nu)$ has non-negative coefficients in $\mu$ and
$\nu$. This was proved combinatorially by
Tutte~\cite{tutte-dichromate}, who showed that
$\Tpol_G(\mu,\nu)$ counts spanning \emm trees, of $G$ according to two
parameters, called \emm internal, and \emm external, activities. Other
combinatorial descriptions of $\Tpol_G(\mu,\nu)$, in terms of other
notions of activity, were given later. Let
us present the one due to Bernardi, which is
nicely related to maps~\cite{bernardi-tutte}. Following Mullin~\cite{mullin-boisees}, we call \emm
tree-rooted map, a map  $M$ equipped with a spanning tree $T$. Walking around
$T$ in counter-clockwise order, starting from the root, defines a
total order on the edges: the first edge that is met
is the smallest one, and so on (Figure~\ref{fig:treetour}). An edge $e$ is \emm internally active, if
it belongs to $T$ and is minimal in its cocycle; that is, all the
edges $e'\not = e$ such that $\left(T\setminus\{e\}\right) \cup \{e'\}$ is a tree
are larger than $e$. It is \emm externally active, if
it does not belong to $T$ and is minimal in the cycle created by
adding $e$ to $T$. Denoting by $\inte(M,T)$ and $\ext(M,T)$ the numbers
of internally and externally edges, one has:
$$
\Tpol_M(\mu,\nu)=\sum_{T\ \small{\rm{spanning\  tree}}} \mu^{\inte(M,T)}  \nu^{\ext(M,T)}.
$$
A non-obvious property of this description is that it
only depends on the underlying graph of~$M$. 

\begin{figure}[thb]
{\includegraphics{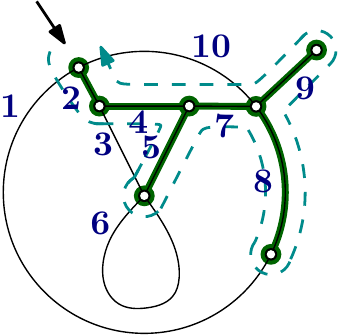}}
\caption{The edges of a tree-rooted map are naturally order by walking
  around the tree. The active edges are those labelled 1,  3, 6 and 9.}
\label{fig:treetour}
\end{figure}

Returning to~\eqref{F-Tutte}, we thus obtain a second description of $F(z,u)$:
\beq\label{F-act}
F(z,u)=\sum_{M \ p-{\rm{\small valent}} \atop T\ \small{\rm{spanning\  tree}}} z^{\ff(M)} 
(u+1)^{\inte(M,T)}.
\eeq
In particular, it makes
sense combinatorially to write $u=\mu-1$ and take $u\in[-1,\infty)$.

 We now give four more descriptions of $F(z,u)$ in terms of the dual
 $p$-angulations.
 For any planar map $M$, it is known that
$$
\Tpol_{M^*}(\mu,\nu)=\Tpol_M(\nu,\mu).
$$
Since 
$$
\Tpol_M(1,\nu)= \sum_{S \subset E, \, S \, \rm{connected}}
(\nu-1)^{\ee(S)+\cc(S)-\vv(M)},
$$
 we first derive from~\eqref{F-Tutte} that
 \begin{eqnarray}
   F(z,u)&= &\sum_{M \ p-\rm{\small{angulation}}} z^{\vv(M)}
   \Tpol_M(1,u+1)
\label{F-dual-Tutte}
\\
&=& \sum_{M \ p-\rm{\small{angulation}} \atop
  S\ \small{\rm{connected \ subgraph}}} z^{\vv(M)} 
u^{\ee(S)+\cc(S)-\vv(M)}
\nonumber 
 \end{eqnarray}
counts $p$-angulations $M$ equipped with a connected (spanning)
subgraph $S$,
by the vertex number of $M$ and the cyclomatic number of $S$.
Also, the ``dual'' expression of~\eqref{F-act} reads
\beq\label{F-dual}
F(z,u)=\sum_{M \ p-{\rm{\small angulation}} \atop T\ \small{\rm{spanning\  tree}}} z^{\vv(M)} 
(u+1)^{\ext(M,T)}.
\eeq

Our next interpretation of $F(z,u)$, which we will not entirely
detail, relies on the connection between $\Tpol_M(1,\nu)$ and the
\emm recurrent, (or: \emm critical,) configurations of the sandpile
model on $M$. It is known~\cite{merino,cori-borgne} that 
$$
\Tpol_M(1,\nu) = \sum_{C \ \rm{\small recurrent}} \nu^{\ell(C)},
$$
where the sum runs over all recurrent configurations
$C$, and  $\ell (C)$ is the \emm level, of $C$. Hence
\beq\label{F-sandpile}
 F(z,u)= \sum_{M \ p-{\rm{\small{angulation}}} \atop C \ \rm{recurrent} } z^{\vv(M)}
  (u+1)^{\ell(C)}
\eeq
also counts $p$-angulations $M$ equipped with a recurrent
configuration $C$ of the sandpile model, by the vertex number of $M$
and the level of $C$.

Our final interpretation is in terms of  the \emm Potts, model.
Take $q\in \N$. 
A \emm $q$-colouring, of the vertices of $G=(V,E)$ is a map $c : V
\rightarrow \{1, \ldots, q\}$. An edge of $G$ is \emm monochromatic,
if its endpoints share the same colour. Every loop is thus
monochromatic. The number of monochromatic edges is denoted by $m(c)$.
The \emm partition function of the  Potts model,
on $G$ counts colourings by the number of monochromatic edges:
$$
\Ppol_G(q, \nu)= \sum_{c  : V\rightarrow \{1, \ldots, q\}}
\nu^{m(c)}.
$$
The Potts model is a classical magnetism model in statistical physics, which
includes (for $q=2$) the famous Ising model (with no magnetic
field)~\cite{welsh-merino}. Of course, $\Ppol_G(q,0)$ is the chromatic
polynomial of $G$. More generally,  it is not hard to see that 
  $\Ppol_{G}(q,\nu)$ is
always a \emm polynomial, in $q$ and $\nu$, and a multiple of $q$. Let
us define the \emm reduced Potts polynomial,
$\tilde \Ppol_G(q,\nu)$ by
$$
 \Ppol_G(q,\nu)= q\, \tilde \Ppol_G(q,\nu).
$$
Fortuin and Kasteleyn established the 
  equivalence of $\tilde \Ppol_G$ 
with the Tutte polynomial~\cite{Fortuin:Tutte=Potts}: 
\begin{eqnarray*}
\tilde \Ppol_G(q,\nu)~=~\sum_{S\subseteq E(G)}q^{\cc(S)-1}(\nu-1)^{\ee(S)}~=~(\mu-1)^{\cc(G)-1}(\nu-1)^{\vv(G)-1}\,\Tpol_G(\mu,\nu),
\end{eqnarray*}
for $q=(\mu-1)(\nu-1)$. Setting $\mu=1$, we obtain, for a \emm connected,
graph $G$
$$
\tilde \Ppol_G(0,\nu)~=(\nu-1)^{\vv(G)-1}\,\Tpol_G(1,\nu).
$$
Returning to~\eqref{F-dual-Tutte} finally gives
\beq\label{F-Potts}
F(z,u)= u\sum_{M \ p-\rm{\small{angulation}} }(z/u)^{\vv(M)}\, \tilde
\Ppol_M(0,u+1).
\eeq

\subsection{Formal power series}

Let $A=A(z) \in \mathbb K[[z]]$ be a power series in  one variable with
coefficients in a field $\GK$. We say that $A$ is \emm D-finite, if it satisfies
a (non-trivial) linear differential equation with coefficients in
$\GK[z]$ (the ring of polynomials in $z$). More generally, it is \emph{D-algebraic}
if there exist a positive
integer $k$ and a non-trivial polynomial $P \in \mathbb
K[z,x_0,\dots,x_k]$ such that $P\left(z,A,\frac {\partial A} {\partial
    z},\dots,\frac {\partial^k A} {\partial z^k}\right) =
0$. 

A $k$-variate power series $A=A(z_1,\ldots, z_k)$ with coefficients in
$\GK$ is \emph{D-finite} if its partial derivatives (of all orders) span a
finite dimensional vector space over 
$\mathbb{K}(z_1,\ldots, z_k)$.


\section{Generating functions for forested maps}
\label{sec:eq}
Fix $p\ge 3$. 
We give here a system of equations that defines the \gf\
$F(z,u)$ of $p$-valent forested maps, or, more precisely, the series $zF'_z(z,u)$ that counts forested maps with a
marked face. We also give 
 simpler systems for two variants of
$F(z,u)$,  involving no derivative.

\subsection{$p$-Valent  maps}

\begin{theo} \label{thm:equations} 
Let $\theta$, $\Phi_1$ and $\Phi_2$
  be the following doubly hypergeometric series:
$$
\theta(x,y)  = \sum_{i \geq 0} \sum_{j \geq 0} t^c_{2i+j} {2i + j
  \choose i,i,j} x^i y^j,  
$$
\beq\label{phi}
\Phi_1(x,y) = \sum_{i\geq 1} \sum_{j \geq 0} t_{2i+j} { 2i + j - 1 \choose i - 1,i,j} x^i y^j, \ \ \ \ 
\Phi_2(x,y) = \sum_{i\geq 0} \sum_{j \geq 0} t_{2i+j+1} { 2i + j  \choose i,i,j} x^i y^j ,
\eeq
where $t_k$ and  $t^c_k$ 
are given by~\eqref{deftk} and ${a+b+c} \choose {a,b,c}$ denotes the trinomial
coefficient $(a+b+c)!/(a!b!c!)$.

There exists a unique pair $(R,S)$ 
of power series in $z$ with constant term $0$ and coefficients in $\qs[u]$ that
 satisfy
\begin{eqnarray}
 R &=& z + u\,\Phi_1(R,S),\label{defR}  \\ 
S &=& u\,\Phi_2(R,S).\label{defS} 
\end{eqnarray}

The generating function $F(z,u)$ of
  $p$-valent forested maps is characterized by 
  $F(0,u)=0$ and 
\beq\label{frs}
F'_z(z,u) = \theta(R,S).
\eeq
\end{theo}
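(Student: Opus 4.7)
My plan is combinatorial, built on the Bernardi reformulation~\eqref{F-act}: $F(z,u)=\sum_{(M,T)} z^{\ff(M)}(1+u)^{\inte(M,T)}$ over $p$-valent tree-rooted maps. Expanding $(1+u)^{\inte(M,T)}=\sum_A u^{|A|}$, with $A$ ranging over subsets of internally active edges of $(M,T)$, recasts $F(z,u)$ as the \gf\ for triples $(M,T,A)$. Differentiating in $z$ further marks a face of~$M$, so $F'_z(z,u)$ enumerates quadruples $(M,T,A,f)$, which is what~\eqref{frs} should encode.

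I would then apply a Schaeffer-type blossoming bijection to encode such a quadruple by a \emph{$p$-valent plane tree with paired buds}: cut each non-tree edge of $M$ in half into two buds, turning $T$ into a $p$-valent tree whose leaves are these buds; the contour of $T$ induces a non-crossing matching of the buds that recovers $M$. The marked face becomes a marked corner of this decorated tree, and the active set $A$ becomes a decoration of some tree edges by a local condition on their surrounding buds.

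Next, define $R(z,u),S(z,u)\in z\mathbb Q[u][[z]]$ as \gfs\ for two kinds of \emph{arch-blocks} cut from the decorated tree at matched bud pairs: $R$ counts a block whose outermost arch sits above an $A$-marked tree edge (so $u$ appears with it), while $S$ counts one without such a marker. Decomposing an arch-block at its topmost $p$-valent vertex produces a leaf-rooted $p$-valent subtree on $2i+j$ leaves (contributing $t_{2i+j}$), of which $2i$ leaves form $i$ matched pairs carrying nested $R$-blocks (factor $R^i$, plus a fresh $u$ for the newly closed arch) and $j$ leaves carry $S$-blocks (factor $S^j$); the trinomial $\binom{2i+j-1}{i-1,i,j}$ counts the interleavings of these three classes of leaves (the two distinguishable sides of the $R$-arches, plus the $S$-attached leaves) once the root-leaf has been placed as one side of an $R$-arch. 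This yields~\eqref{defR}; equation~\eqref{defS} follows identically with the root-leaf instead fixed as an $S$-attachment, producing $t_{2i+j+1}$ and $\binom{2i+j}{i,i,j}$. Finally,~\eqref{frs} is obtained by marking a \emph{corner} rather than a leaf at the outermost level, which replaces leaf-rooted by corner-rooted subtrees (so $t_k\to t_k^c$) and absorbs the root-leaf shift, giving $\theta(R,S)$ exactly.

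The main obstacle is faithfully translating Bernardi's internal-activity condition --- a global condition on the contour order of $T$ --- into a local condition on the bud matching that is preserved under the recursive cut into $R$- and $S$-blocks. Getting this correspondence right is what forces the asymmetric trinomial $\binom{2i+j}{i,i,j}$ with its two distinguishable classes of $i$ leaves: each $R$-arch has two distinguishable halves (opener and closer in the contour of $T$) that must be tracked separately to ensure the $u$-weights tally exactly the elements of $A$. Once this local characterisation is established, the uniqueness of $(R,S)$ as power series with vanishing constant term is automatic, since $\Phi_1$ and $\Phi_2$ have no constant term, so~\eqref{defR}--\eqref{defS} determine the coefficients of $R$ and $S$ inductively on the order in~$z$.
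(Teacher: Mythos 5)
Your route is genuinely different from the paper's: you start from the Bernardi/Tutte reformulation~\eqref{F-act} and propose a direct blossoming bijection for tree-rooted maps decorated by a set $A$ of active edges, whereas the paper first contracts the spanning forest (Lemma~\ref{lem:cont}), reducing $F(z,u)$ to the generating function $\bar M$ of maps counted by face number and vertex-degree distribution, and then invokes the Bouttier--Guitter bijective formulas~\eqref{relM}--\eqref{relS} for $\bar M'_z$. The paper's argument is thus short and modular; yours, if completed, would be a self-contained bijective proof.

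However, as it stands the proposal has a genuine gap, and you name it yourself: you never establish that Bernardi's internal-activity condition, which depends on the global contour order of $T$, can be recast as a \emph{local} condition on the bud matching that is compatible with the recursive cut into $R$- and $S$-blocks. This is not a detail to be filled in later: it is the entire content of the claimed identities~\eqref{defR}--\eqref{defS}. Without a precise definition of an ``arch-block'' (what its root is, what $z$ marks, why $R$ carries charge $1$ and $S$ charge $0$ in the sense needed, why the outermost $u$-weight tallies exactly one element of $A$, and why the trinomial coefficients $\binom{2i+j-1}{i-1,i,j}$ and $\binom{2i+j}{i,i,j}$ arise) and a lemma characterising activity locally, the key step is an assertion, not a proof. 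Note also that your justification of uniqueness (``$\Phi_1$ and $\Phi_2$ have no constant term, hence induction works'') is too coarse: if $\Phi_2$ had a pure-$y$ term, the induction on the order in $z$ for $S$ would be circular. It does not, but only because $t_2=0$, and this must be checked --- the paper's induction is phrased more carefully to make this visible. I would therefore not accept the proposal as a proof; it is a plausible programme whose central bijective lemma remains to be formulated and established.
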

\noindent{\bf Remarks}\\
1. These equations allow us to compute the first terms in the
expansion of $F(z,u)$, for any fixed $p\ge 3$. This is how we
obtained~\eqref{F-3}.\\ 
2. When $p$ is even, then $t_{2i+1}=0$ for all $i$. In particular, all
terms occurring in the definition~\eqref{phi} of $\Phi_2$ are multiples
of $y$, so that  $S=0$.  The simplified system reads:
\beq\label{system-simple}
F'_z(z,u) = \theta(R) \quad \quad \hbox{and}\quad \quad  R = z + u\,\Phi(R),
\eeq
with 
$$
\theta(x)  = \sum_{i \geq 0}  t^c_{2i} {2i 
  \choose i} x^i \quad \quad  \hbox{and}\quad \quad
\Phi(x) = \sum_{i\geq 1}  t_{2i} { 2i  - 1 \choose i} x^i .
$$
\\
3. When $u=0$, an even more drastic simplification follows from~(\ref{defR}-\ref{defS}): not only
$S=0$, but also $R=z$, so that~\eqref{frs} becomes an explicit
expression of $F'_z$:
$$
F'_z(z,0) =\sum_{i \geq 0}  t^c_{2i} {2i   \choose i}z^i,
$$
or equivalently,
\beq\label{Fz0}
F(z,0) =\sum_{i \geq 0}  t^c_{2i} {2i  \choose i}\frac{z^{i+1}}{i+1}
=\sum_{\ell \ge 1}\frac{p((p-1)\ell)!}{(\ell-1)! (1+(p-2)\ell/2)!
  (2+(p-2)\ell/2)!} z^{2+(p-2)\ell/2},
\eeq
where we require $\ell $ to be even if $p$ is odd.
This series counts $p$-valent maps equipped with a spanning tree, and
this expression was already proved by Mullin~\cite{mullin-boisees}.
\\
4. The series  $\theta$ and  $\Phi_i$  are explicited when $p=4$ and
$p=3$ in Sections~\ref{sec:de4} and~\ref{sec:cubic-de}, respectively.

\medskip
In order to prove Theorem~\ref{thm:equations}, we first relate $F(z,u)$ to the \gf\ of planar
maps counted by the distribution of their vertex degrees. More precisely, let
$\bar M\equiv \bar M(z,u; g_1, g_2, \ldots ; h_1,h_2,\ldots)$ be the generating
function of rooted planar maps,
with a weight $z$ per face, $u g_k$  per \emm non-root, vertex
of degree $k$ and $h_k$ if the root vertex has degree~$k$. 
\begin{lem}  \label{lem:cont} 
The series $F(z,u)$ is related to $M$ through:  
$$
 F(z,u)=  \bar M(z,u;t_1, t_2, \ldots; t^c_1, t^c_2,
 \ldots).
$$
\end{lem}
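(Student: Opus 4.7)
The plan is to establish a weight-preserving bijection between $p$-valent forested maps $(M,F)$ and rooted planar maps $N$ decorated with a $p$-valent tree at each vertex, from which the identity reads off directly.

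Given $(M,F)$, I would contract each connected component of $F$ to a single vertex, producing a planar map $N$. The vertices of $N$ correspond bijectively to the components of $F$ (with the root component $T_0$ giving the root vertex $v_0$ of $N$), the edges of $N$ are those of $M \setminus F$, and $\ff(N)=\ff(M)$ since contracting trees preserves faces. Each component $T$ of $F$, together with its dangling half-edges (the half-edges of $M\setminus F$ incident to its vertices), is a $p$-valent tree whose leaves are those dangling half-edges and whose leaf count equals the degree of the corresponding vertex of $N$. The root corner of $M$ lies inside $T_0$, canonically distinguishing one internal corner of $T_0$ and making it a corner-rooted $p$-valent tree; a canonical convention (e.g.\ following the contour walk of $T_0$ from this marked corner to the first external leaf encountered) then transfers this data to a root corner for $N$.

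The heart of the proof is a local counting argument: for a fixed rooted planar map $N$ with root vertex of degree $k_0$ and non-root vertices of degrees $k_1, k_2, \ldots$, the number of forested maps $(M, F)$ mapping to $N$ is exactly $t^c_{k_0} \cdot \prod_{i\ge 1} t_{k_i}$. At $v_0$ the decorations are corner-rooted $p$-valent trees with $k_0$ leaves, which gives $t^c_{k_0}$ options. At each non-root vertex $v_i$ the decoration is a pair $(T_i, \alpha)$ with $T_i$ an unrooted $p$-valent tree with $k_i$ leaves and $\alpha$ a cyclic-order-preserving alignment of its leaves with the half-edges of $v_i$, counted modulo automorphism of $T_i$. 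The key combinatorial input is the following freeness lemma: any orientation-preserving planar automorphism of a $p$-valent tree fixing a single leaf must be the identity, because the fixed leaf pins down the cyclic rotation at the adjacent internal vertex, and then propagates through the whole tree by connectivity. From this, $\mathrm{Aut}(T_i)$ acts freely on both the $k_i$ alignments and the $k_i$ leaves; summing $k_i/|\mathrm{Aut}(T)|$ over unrooted trees then gives $t_{k_i}$ on both sides of the comparison.

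Once the local counts are in place, multiplying by $z^{\ff(N)} \cdot u^{|V(N)|-1}$ and summing over all rooted planar maps $N$ reproduces exactly $\bar M(z,u;t_1,t_2,\ldots;t^c_1,t^c_2,\ldots)$. The main obstacle is the freeness lemma above and the numerical identity $\sum_T k/|\mathrm{Aut}(T)| = t_k$ that it entails; once that combinatorial fact is granted, the contraction/inflation pair is routine and the weight bookkeeping is immediate.
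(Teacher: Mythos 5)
Your proposal is correct, but it takes a genuinely different route from the paper. After the common contraction step, the two arguments diverge on how they identify the tree decorating a non-root vertex of $M'$ with a leaf-rooted tree counted by $t_k$. The paper exploits the rigidity of rooted planar maps: since a rooted map has no non-trivial automorphism, one may fix once and for all a total order on the half-edges of $M'$, and then root each non-root tree $T_v$ at the leaf attached to the smallest half-edge incident to $v$. Leaf-rooted trees being rigid, each non-root vertex of degree $k$ contributes exactly $t_k$ with no counting subtlety whatsoever. You instead keep the trees unrooted, count inflations as pairs (unrooted plane tree, cyclic alignment) modulo $\mathrm{Aut}(T)$, and close the gap with a freeness lemma (no non-trivial orientation-preserving planar automorphism of a plane tree fixes a leaf) plus the orbit-counting identity $\sum_T k/|\mathrm{Aut}(T)| = t_k$. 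Both are sound. The paper's version is tidier precisely because it never leaves the rooted world, so automorphisms never enter; your version is more intrinsic (it needs no externally imposed half-edge order) but pays for it by having to state and use the freeness lemma. Your convention for transferring the root corner to $M'$ (walk from the marked corner of $T_0$ to the first external leaf) is a valid alternative to the paper's implicit one of simply letting the arrow sit in the corner of $M'$ that contains it. The remaining bookkeeping---$\ff(M)=\ff(M')$, $\cc(F)-1 = |V(M')|-1$, corner-rooting of $T_0$ contributing $t^c_{k_0}$---matches the paper.
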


\begin{figure}[thb]
\begin{center}
(a)\  \begin{tabular}{c}{\includegraphics{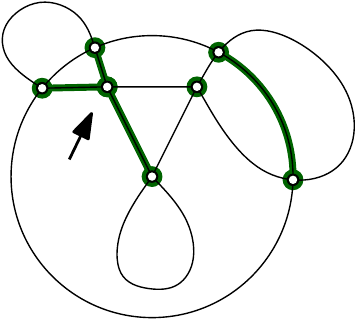}}
\end{tabular}\ \ \
(b)\    \begin{tabular}{c}{\includegraphics{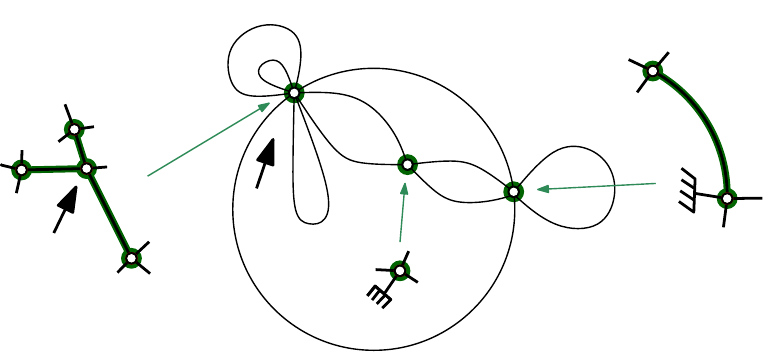}}
\end{tabular}\ \ \
\\
(c)\    \begin{tabular}{c}{\includegraphics{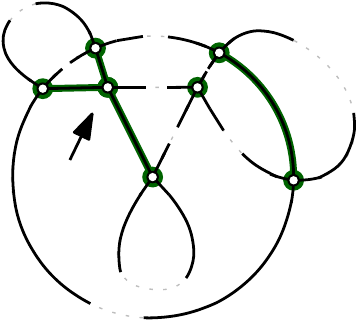}}
\end{tabular}
\end{center}
\caption{(a) A 4-valent forested map with 9 faces and 2
  non-root components. 
(b) The same map,
  after contraction of the forest. (c) Assembling the 3 trees gives the
  original forested map.}
\label{fig:forcou}
\end{figure}

\begin{proof} 
The idea is to contract each tree of a spanning forest, incident to $k$ half-edges, into a
  $k$-valent vertex. It is adapted from~\cite[Appendix
  A]{BDG-blocked}, where the authors study  4-valent forested maps for which
  the root edge is not in the forest. It can also be
  seen as an extension of Mullin's construction for maps equipped with
  a spanning tree~\cite{mullin-boisees}. Finally, it
  also appears in~\cite{sportiello}.

Let us now get into the details. First, let us recall that rooted maps
have no symmetries: all vertices, edges and half-edges are
distinguishable. In particular, one can fix, for every rooted planar map $M'$
(with arbitrary valences) a total order on its half-edges. This order
may have a combinatorial significance --- a good choice is
the order in which half-edges are visited when applying the
construction of~\cite{bdg2002} --- but can also be arbitrary.

We now describe a bijection $\Phi$, illustrated in Figure~\ref{fig:forcou},  between forested $p$-valent maps $(M,F)$
and pairs formed of a  map $M'$ and a collection $(T_v, v \in V(M'))$
of $p$-valent trees associated with the vertices of $M'$, such that the tree
associated with the root vertex of $M'$ is corner-rooted,  the
others are leaf-rooted, and the number of leaves of $T_v$ is the
degree of $v$ in $M'$. 

The map $M'$ is obtained by contracting all edges of the forest
$F$ (Figure~\ref{fig:forcou}(b)). The arrow that marks the root corner remains at the same place. Now split
into two half-edges each edge of $M$ that is not in $F$: this gives a
collection of $p$-valent trees, each of them being naturally associated with
a vertex $v$ of $M'$. The half-edges of these trees form together the
edges of $M'$ (Figure~\ref{fig:forcou}(c)). If $v$ is the
root vertex of $M'$, then $T_v$ inherits the corner-rooting of
$M$. Otherwise, we root $T_v$ at the smallest of its half-edges, for
the total order on half-edges of $M'$.

The following properties are readily checked:
\begin{itemize}
\item $T_v$ has $k$ leaves if $v$ has degree $k$ in $M'$,
\item $M$ and $M'$ have the same number of faces,
\item the number of vertices of $M'$ is the number of components of
  $F$.
\end{itemize}

Let us now prove that $\Phi$ is bijective. 
To recover the forested map $(M,F)$ from the contracted map $M'$ and
the associated collection of trees, we  inflate each vertex $v$ of
$M'$ into the corresponding tree $T_v$. If $v$ is  the root vertex of
$M'$, the root corner of $T_v$ must coincide with the root corner of
$M'$. Otherwise, the root half-edge of $T_v$ is put on the smallest of the
half-edges incident to $v$ in $M'$. This proves the injectivity of
$\Phi$. Since this reverse construction can be applied to any map
$M'$ with a corresponding collection of trees, $\Phi$ is also
surjective.
\end{proof}

\begin{proof}[Proof of Theorem~{\rm\ref{thm:equations}}]
In a recent paper,  Bouttier and Guitter~\cite{bg-continuedfractions}
have expressed the series 
$\bar M$ via a system of equations,
established bijectively\footnote{Strictly
  speaking, they do not take the vertex or face number into account,
  but both are
  prescribed by the distribution of vertex degrees.}. Their
expression    is actually fairly
complicated~\cite[Eq.~(1.4)]{bg-continuedfractions}, but the series
$z\bar M'_z$, which counts maps with a marked face, has a much simpler
expression~\cite[Eq.~(2.6)]{bg-continuedfractions}: 
\beq\label{relM} 
\bar M'_z = \sum_{i \geq 0} \sum_{j \geq 0} h_{2i+j} {2i + j \choose i,i,j} R^i S^j, 
\eeq
where $h_0=0$ and, by~\cite[Eq.~(2.5)]{bg-continuedfractions},
\beq
R = z + u\sum_{i\geq 1} \sum_{j \geq 0} g_{2i+j} { 2i + j - 1 \choose i
  - 1,i,j} R^i S^j, \ \ \ \ 
S =u \sum_{i\geq 0} \sum_{j \geq 0} g_{2i+j+1} { 2i + j  \choose i,i,j}
R^i S^j. 
\label{relS} 
\eeq 
Theorem~\ref{thm:equations} follows by specialization, using Lemma~\ref{lem:cont}. 

It remains to check that~(\ref{defR}--\ref{defS}) defines a
unique pair of series $R$ and $S$ in $z$ with constant terms $0$. This is
readily proved by observing that~\eqref{defR} determines $R$ up to
order $n$ if we know $R$ and $S$ up to order $n-1$; and
that~\eqref{defS} determines $S$ up to order $n$ if we know $R$ up to
order $n$ and $S$ up to order $n-1$.
\end{proof}

\noindent{\bf Remark.}
The expression of $\bar M$ given in~\cite[Eq.~(1.4)]{bg-continuedfractions} leads to an
explicit expression of $F(z,u)$ in terms of $R$ and $S$. However, this
expression involves a triple sum (a double sum when $p$ is even, see
for instance~\eqref{Fzu-expl}).
This is why we prefer handling the  expression of $F'$. We discuss
this further in the final section.

\subsection{Quasi-$p$-valent  maps ($p$ odd)}

A map is said to be \emph{quasi-$p$-valent} if all its vertices have
degree $p$, apart from one vertex  which is a leaf.
Such maps exist only when $p$ is odd. They are
naturally rooted at their leaf:  the root
corner is  the unique corner incident to the leaf and the root edge is
the unique edge incident to the leaf. Let  $G(z,u)$ denote the
generating function of quasi-$p$-valent forested maps counted by faces
($z$) and  non-root components ($u$) (see Figure~\ref{fig:quasic}).

\begin{figure}[h!]
\begin{center}
\includegraphics{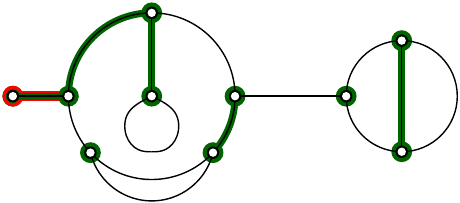}
\end{center}
\caption{A quasi-cubic 
forested map with 6
  faces and 4 non-root components.} 
\label{fig:quasic}
\end{figure}


\begin{prop}\label{prop:G}
The generating function of quasi-$p$-valent forested maps  is
\beq\label{G-expr}
 G(z,u) = ( 1 +  \bu) \left(zS - u \sum_{i \geq 2}\sum_{j \geq 0} t_{2i+j-1} { 2 i + j - 2 \choose i-2,i,j } R^i S^j \right), 
\eeq
where $\bu=1/u$, the series $R$ and $S$ are  defined
by~\rm{(\ref{defR}-\ref{defS})} and the numbers $t_k$ by \eqref{deftk}.  Also,
$$
G'_z(z,u)=(1+\bu)S.
$$
\end{prop}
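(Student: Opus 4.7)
My plan is to first establish the simpler identity $G'_z(z,u) = (1+\bar u)S$ via a bijective adaptation of Lemma~\ref{lem:cont} combined with the Bouttier--Guitter formula used in the proof of Theorem~\ref{thm:equations}, and then to verify the closed expression for $G$ by checking that it has the correct $z$-derivative and vanishes at $z=0$.

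For the derivative identity, I would run the contraction $\Phi$ of Lemma~\ref{lem:cont} on a quasi-$p$-valent forested map $(M,F)$. Non-root components still yield $p$-valent leaf-rooted trees, so every non-root vertex of $M'$ of degree $k$ receives weight $u\,t_k$. The root tree $T_{v_0}$ requires a case split depending on whether the unique leaf of $M$ lies in a non-trivial forest component (\emph{Case A}) or is isolated in $F$ (\emph{Case B}). In Case A, this leaf becomes one of the $k{+}1$ leaves of a $p$-valent tree, with $k=\deg_{M'}(v_0)$; since a leaf carries a single corner, corner-rooting at it is the same as leaf-rooting, giving a contribution $t_{k+1}$. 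In Case B, $T_{v_0}$ is the trivial tree consisting of just the root leaf, forcing $k=1$ with weight $1$. The root weight is therefore $h_k = t_{k+1} + \mathbbm{1}_{\{k=1\}}$, the two cases not overlapping because $t_2=0$.

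Applying the Bouttier--Guitter formula~\eqref{relM} with these $h_k$ and the same $g_k=t_k$ as in Theorem~\ref{thm:equations}, and recognising $\Phi_2(R,S)=S/u$ by~\eqref{defS}, one has $\sum_{i,j\ge 0}t_{2i+j+1}\binom{2i+j}{i,i,j}R^iS^j = S/u$, while the indicator sum contributes only the term $(i,j)=(0,1)$, namely $\binom{1}{0,0,1}S=S$. Adding the two gives $G'_z = S/u + S = (1+\bar u)S$.

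For the closed form of $G$, let $W$ stand for the double sum in the statement, so the target reads $G=(1+\bar u)(zS-uW)$. Since $R,S$ vanish at $z=0$, $G(0,u)=0$ is immediate, and it remains to prove $zS'_z = uW'_z$. I would derive this from two formal identities, viewing $\Phi_1,\Phi_2,W$ as bivariate series in $(R,S)$: $\partial_RW = R\,\partial_R\Phi_2$ and $\partial_SW = R\,\partial_S\Phi_2 - \Phi_1$. Both reduce to elementary trinomial-coefficient manipulations after an index shift $i\mapsto i+1$ (the prototype being $i\binom{2i+j}{i,i,j} = (i+1)\binom{2i+j}{i-1,i+1,j}$). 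Combining them with the chain rule on $S=u\Phi_2(R,S)$---which yields $\partial_R\Phi_2\cdot R'_z + \partial_S\Phi_2\cdot S'_z = S'_z/u$---and with $z=R-u\Phi_1$ from~\eqref{defR} gives $uW'_z = RS'_z - u\Phi_1 S'_z = zS'_z$, as required. The main obstacle I anticipate is checking the second $W$-identity, whose $i=1$ boundary contribution must precisely cancel the unshifted $\Phi_1$.
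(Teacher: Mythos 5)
Your proof is correct, but it takes a noticeably different route from the paper's. The paper isolates the bijective lemma $G = (1+\bar u)\,\Gamma_1(z,u;t_1,t_2,\ldots)$ by comparing quasi-$p$-valent forested maps whose root edge is in or out of the forest (removing/adding the root edge accounts for the $\bar u$ factor), and then simply \emph{cites} the closed expression for $\Gamma_1$ from Bouttier--Di Francesco--Guitter (their Eq.~(2.6)) to obtain~\eqref{G-expr}; the derivative formula follows because $\Gamma_1 = [h_1]\bar M$, so $\Gamma_1'_z = S$ by~\eqref{relM}. You instead run the contraction directly on the quasi-$p$-valent forested map, keeping the two cases (root edge in/out of $F$) inside a single root weight $h_k = t_{k+1} + \mathbbm{1}_{\{k=1\}}$, which feeds into~\eqref{relM} and immediately yields $G'_z = \Phi_2(R,S) + S = (1+\bar u)S$. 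That bundles the paper's lemma and its ``$\Gamma_1'_z = S$'' step into one computation; the underlying combinatorics is the same case split, repackaged. The genuinely new content of your proposal is the second half: you \emph{re-derive} the closed form~\eqref{G-expr}, rather than quoting it, by proving $zS'_z = uW'_z$ through the two formal identities $\partial_x W = x\,\partial_x\Phi_2$ and $\partial_y W = x\,\partial_y\Phi_2 - \Phi_1$. I checked both (including the boundary terms: the $i=1$ coefficient of $x\,\partial_y\Phi_2 - \Phi_1$ is $(j+1)t_{j+2} - (j+1)t_{j+2} = 0$, matching the absence of an $i=1$ term in $\partial_y W$); they are correct. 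Combined with the chain rule $\partial_x\Phi_2\,R' + \partial_y\Phi_2\,S' = S'/u$ and $z = R - u\Phi_1(R,S)$, this indeed gives $uW'_z = zS'_z$, and since both sides of~\eqref{G-expr} vanish at $z=0$ the identity follows. The trade-off is clear: the paper's proof is shorter because it leans on Bouttier \emph{et al.}'s formula for $\Gamma_1$, whereas yours is more self-contained (only~\eqref{relM} and~(\ref{defR}--\ref{defS}) are needed) at the price of the trinomial-coefficient verification. Both routes are valid.
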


As in the previous subsection, the key of this result is to relate
$G(z,u)$ to a well-understood \gf\ of maps --- here, the
\gf\ $\Gamma_1\equiv \Gamma_1(z,u;g_1, g_2, \ldots )$ that counts planar
maps rooted at  leaf,  with a weight $z$ per face and $ug_k$ per $k$-valent \textit{non-root} vertex.

\begin{lem} The following analogue of Lemma~{\rm\ref{lem:cont}} holds for
  quasi-$p$-valent forested maps:
$$
 G(z,u) = ( 1 +  \bu ) \, \Gamma_1 (z,u;t_1, t_2, \ldots)
$$
with $\bu=1/u$.
\end{lem}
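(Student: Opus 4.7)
The plan is to mimic the proof of Lemma~\ref{lem:cont}, but to first peel off the root edge of the map so that the root leaf ends up isolated in the forest; this will manufacture the factor $(1+\bu)$, after which the contraction bijection of Lemma~\ref{lem:cont} yields the desired identification.

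Given a quasi-$p$-valent forested map $(M,F)$, let $e$ be the unique edge incident to the root leaf, and set $F'' := F \setminus \{e\}$. Because $e$ is the only edge incident to the leaf, the leaf is always an isolated vertex of $F''$; conversely, from any forested map $(M,F'')$ with $e \notin F''$ one may either keep $F=F''$ or take $F = F'' \cup \{e\}$, the latter being automatically a forest since adding $e$ cannot create a cycle. This exhibits $(M,F) \mapsto (M, F \setminus \{e\})$ as a 2-to-1 surjection onto quasi-$p$-valent forested maps $(M,F'')$ with $e \notin F''$. Moreover, when $e$ is added back to $F''$, the trivial root component (the leaf alone) merges with the non-root component containing the other endpoint of $e$; the resulting component still contains the leaf, hence remains the root component, so the number of non-root components drops by exactly one. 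Consequently the weight of $(M, F'' \cup \{e\})$ equals that of $(M,F'')$ divided by $u$, and summing both preimages yields
$$
G(z,u) \;=\; (1 + \bu)\, H(z,u),
$$
where $H(z,u)$ enumerates quasi-$p$-valent forested maps whose root edge is not in the forest.

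The next step is to identify $H$ with $\Gamma_1(z,u;t_1,t_2,\ldots)$ by applying verbatim the contraction bijection $\Phi$ from the proof of Lemma~\ref{lem:cont}. Starting from $(M,F'')$ with $e \notin F''$, contract each tree of $F''$ to a single vertex to obtain a map $M'$. Since the root leaf of $M$ is isolated in $F''$, it survives untouched as a $1$-valent vertex of $M'$ still carrying the root corner; thus $M'$ is leaf-rooted. Every non-root component of $F''$ is disjoint from the leaf, hence is a genuine $p$-valent tree with $k$ external half-edges, where $k$ is the degree of the corresponding vertex in $M'$; it is canonically leaf-rooted via the total order on the half-edges of $M'$ used in Lemma~\ref{lem:cont}. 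Faces are preserved, while each non-root vertex of $M'$ of degree $k$ contributes the weight $u\cdot t_k$ ($u$ for its component, $t_k$ for the choice of $p$-valent leaf-rooted tree), which matches exactly the definition of $\Gamma_1(z,u;t_1,t_2,\ldots)$.

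The one delicate point is the bookkeeping around $e$: one must ensure that the deletion/insertion of $e$ always affects the count of \emph{non-root} components in the expected way, which relies crucially on the leaf being the root vertex, so that the merged component is always the root one. Beyond this, no new combinatorial idea is needed; the map $\Phi$ and its inverse are those of Lemma~\ref{lem:cont}, merely restricted to forests in which $e$ is absent.
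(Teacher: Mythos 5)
Your proof is correct and follows essentially the same two-step argument as the paper: reduce to the subcase where the root edge $e$ is absent from the forest, which produces the factor $(1+\bu)$, and apply the contraction bijection of Lemma~\ref{lem:cont} to identify that subcase with $\Gamma_1(z,u;t_1,t_2,\ldots)$, the root leaf surviving as the $1$-valent root vertex of $M'$. You spell out a few points the paper leaves implicit (that adding $e$ cannot create a cycle because the root leaf is isolated, and that the merged component is still the root component so exactly one \emph{non-root} component disappears), but the route is the same.
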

\begin{proof}
  The bijection used in the proof of Lemma \ref{lem:cont}  shows that the
series  $\Gamma_1(z,u;t_1, t_2, \ldots)$ counts quasi-$p$-valent
forested maps such that the 
root edge \emm is not, in the forest. (With the notation used in that proof,
the root vertex of $M'$, of degree  1, remains a trivial tree during
the inflation step). To each such forested map, we can add the root edge
to the forest. The resulting  forested map has  one less component, hence the factor $\bu=1/u$.
\end{proof}

\begin{proof}[Proof of Proposition~{\rm\ref{prop:G}}]
The series $\Gamma_1$ has also been expressed by  Bouttier \emm et
al., in terms of the series $R$ and $S$ of~\eqref{relS}
(see~\cite[Eq.~(2.6)]{bdg2002}):
\beq\label{G1}
\Gamma_1= zS- u \sum_{i\ge2}\sum_{j\ge 0} g_{2i+j-1} {{2i+j-2} \choose
  {i-2,i,j}}R^i S^j. 
\eeq
This gives the first part of Proposition~\ref{prop:G}. For the second
part, we observe that $\Gamma_1$ is by definition the coefficient of
$h_1$ in the series $\bar M(z,u;g_1, \ldots ; h_1, \ldots)$ defined
above Lemma~\ref{lem:cont}. Hence it follows from~\eqref{relM} that $\Gamma_1'=S$
(this can also be derived combinatorially from~\cite{bdg2002}). 
\end{proof}

\subsection{When the root edge is outside  the forest}
  \label{subsec:outside}
We now focus on 
 forested maps such that  the root edge is
outside the forest. Let  $H(z,u)$ denote the associated generating
function.

\begin{prop}\label{prop:H}
The generating function 
 of $p$-valent forested maps where the
root edge is outside the forest is 
\begin{multline}
 H(z,u) =\bu zR +\bu zS^2 -\bu z^2
\label{H-expr}\\
-2S  \sum_{i \geq 2}\sum_{j \geq 0} t_{2i+j-1}
 { 2 i + j - 2 \choose i-2,i,j } R^i S^j -
 \sum_{i \geq 3}\sum_{j \geq 0} t_{2i+j-2} { 2 i + j - 3 \choose i-3,i,j } R^i S^j,
\end{multline}
where $\bu=1/u$, the series $R$ and $S$ are defined
by~{\rm{(\ref{defR}-\ref{defS})}} and the 
numbers $t_k$ by~\eqref{deftk}. 

  When $p$ is even, then $S=0$  and the first double sum disappears.
In this case, we also have a very simple expression of $H'_z(z,u)$:
\beq\label{Hprime-R}
H'_z(z,u)= 2\bu (R-z).
\eeq
\end{prop}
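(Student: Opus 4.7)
The strategy mirrors the proofs of Lemma~\ref{lem:cont} and Proposition~\ref{prop:G}: I would relate $H(z,u)$, via the contraction bijection $\Phi$, to a generating function of standard planar maps with vertex-degree weights, then invoke a Bouttier--Guitter-type formula. Under $\Phi$, a $p$-valent forested map $(M,F)$ whose root edge lies \emph{outside} $F$ corresponds to a pair $(M',(T_v))$ in which the root edge of $M'$ is the non-contracted image of the root edge of $M$. Equivalently, within the corner-rooted root tree of $M'$, the half-edge immediately following the root corner (in counterclockwise order) is a leaf. Hence $H(z,u)$ coincides with a suitable specialisation $\Gamma_H(z,u; t_1,t_2,\ldots)$ of a \gf\ of rooted planar maps carrying a distinguished root edge, weighted by $z$ per face and $u t_k$ per non-root $k$-valent vertex.

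The series $\Gamma_H$ can be read off from the BDG equations of~\cite{bdg2002}, or their variants in~\cite{bg-continuedfractions}, by adapting them to this rooting. In the BDG formalism, the half-edges at the root vertex of $M'$ are classified into ``left'', ``right'' and ``down'' legs (yielding factors $R^i$, $R^i$, $S^j$ with a trinomial coefficient), and the ``leaf-following'' constraint at the root corner selects specific compatible leg configurations. The two double sums in~\eqref{H-expr} arise from these generic configurations (the shifts $2i+j-1$ and $2i+j-2$ in the indexing reflect that one or two legs at the root are fixed by the rooting), while the polynomial part $\bu z R + \bu z S^2 - \bu z^2$ handles boundary cases (loop root edges, small root-vertex degrees in $M'$), and the prefactor $\bu = 1/u$ comes from an ``invert-the-component-count'' manipulation similar in spirit to Proposition~\ref{prop:G}.

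For the derivative formula when $p$ is even, note that $t_{2k+1}=0$ so $S=0$, and~\eqref{H-expr} collapses to $H(z,u) = \bu z R - \bu z^2 - \sum_{i\ge 3} t_{2i-2}\binom{2i-3}{i-3,i,0} R^i$. Differentiating in $z$ and shifting the summation via $k = i-1$ expresses $\sum_{i\ge 3} t_{2i-2}\binom{2i-3}{i-3,i,0}\, i\, R^{i-1}$ as $\sum_{k\ge 2}(k-1) t_{2k}\binom{2k-1}{k} R^k = R\Phi'(R) - \Phi(R)$, where $\Phi$ is the series from~\eqref{system-simple}. The defining relation $R = z + u\Phi(R)$ gives both $u\Phi(R) = R - z$ and $R'_z = (1 - u\Phi'(R))^{-1}$; substituting these, the $R'_z$-terms cancel and one is left with $H'_z = 2\bu(R-z)$.

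The main obstacle is the second paragraph: correctly pinning down the Bouttier--Guitter-type expression for $\Gamma_H$ and verifying that its specialisation at $g_k = t_k$ reproduces~\eqref{H-expr} exactly, including the interpretation of the boundary terms and the careful bookkeeping of the $(i,i,j)$ leg decomposition under the ``leaf-following'' constraint at the root. Once~\eqref{H-expr} is in hand, the derivative identity~\eqref{Hprime-R} follows from routine power-series manipulation.
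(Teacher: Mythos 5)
Your high-level plan is right---contract the forest, observe that having the root edge outside $F$ means the corner-rooted tree at the root vertex of $M'$ is effectively leaf-rooted, then quote a Bouttier--Guitter-type formula---but the crucial step of actually \emph{identifying} the precise auxiliary generating function and the precise formula is missing. You yourself flag this as ``the main obstacle'' and leave it unresolved, so the proof of~\eqref{H-expr} is incomplete. What the paper does concretely is first prove the clean identity $H(z,u)=\bu\,M(z,u;t_1,t_2,\dots)$, where $M$ is the generating function of rooted planar maps weighted by $z$ per face and $ug_k$ per vertex of degree $k$, the weight being applied to \emph{every} vertex, root included. The factor $\bu=1/u$ is then just the correction from $u^{c(F)}$ (one $u$ per component of $F$, since every vertex of $M'$ carries a $u$) down to $u^{c(F)-1}$; your description of $\bu$ as an ``invert-the-component-count'' manipulation in the spirit of Proposition~\ref{prop:G} is not the right mechanism here. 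Once $H=\bu M$ is established,~\eqref{H-expr} follows by plugging the specialization $g_k=t_k$ into the known Bouttier--Di Francesco--Guitter expression $M=(\Gamma_1^2+\Gamma_2)/z-z^2$ from~\cite[Eq.~(2.1)]{bdg2002}, which is exactly the step you do not carry out; the ``boundary'' polynomial $\bu zR+\bu zS^2-\bu z^2$ and the two double sums then appear mechanically from $\Gamma_1$, $\Gamma_2$ and the $-z^2$ term, not from a separate leg-configuration argument.

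On the derivative identity~\eqref{Hprime-R}, your computation is correct \emph{assuming}~\eqref{H-expr}: reindexing
$\Lambda'(R)=\sum_{i\ge3}t_{2i-2}\binom{2i-3}{i-3,i,0}\,i\,R^{i-1}=R\Phi'(R)-\Phi(R)$
and then substituting $\Phi(R)=\bu(R-z)$ and $R'=(1-u\Phi'(R))^{-1}$ does make everything collapse to $2\bu(R-z)$. This is a legitimate route and in fact differs from the paper's, which derives~\eqref{Hprime-R} \emph{independently} of~\eqref{H-expr}: it uses $M=u\,\bar M(z,u;g_1,\dots;g_1,\dots)$, notes $S=0$ when $p$ is even, and combines~\eqref{relM} with~\eqref{relS} to get $\bar M'_z=2\bu(R-z)$ directly. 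Since your derivation of~\eqref{Hprime-R} rests on the unestablished~\eqref{H-expr}, the overall argument still has a hole where the paper's does not.
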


Again, the key of this result is to relate
$H(z,u)$ to a well-understood \gf\ of maps --- here, the
\gf\ $M\equiv M(z,u;g_1, g_2, \ldots )$ that counts rooted planar
maps with a weight $z$ per face and $ug_k$ per  vertex of degree $k$.

\begin{lem} The following analogue of Lemma~{\rm\ref{lem:cont}} holds:
$$
 H(z,u) = \bu \, M(z,u;t_1, t_2, \ldots).
$$
\end{lem}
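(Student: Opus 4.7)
The plan is to reuse the contraction bijection $\Phi$ from the proof
of Lemma~\ref{lem:cont}, restricted to those forested maps whose root
edge lies outside the forest~$F$. Recall that $\Phi$ sends a forested
$p$-valent map $(M,F)$ to a pair $(M',\{T_v\}_{v\in V(M')})$, where
$M'$ is obtained by contracting $F$ in~$M$ and each $T_v$ is the
$p$-valent tree contracted to form the vertex~$v$, completed by the
half-edges coming from the non-forest edges of $M$ incident to it. In
that lemma, $T_{v_0}$ at the root vertex is corner-rooted at the
inherited root corner, while all other $T_v$ are leaf-rooted via a
prescribed total order on the half-edges of~$M'$.

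The key observation I would use is this: the root edge of $M$ lies
outside $F$ if and only if the root half-edge $e_2$ of $M$ survives
after contraction, that is, if $e_2$ is a leaf of $T_{v_0}$ rather
than an internal half-edge. Because the rooting convention $(e_1,e_2)$
places $e_2$ immediately counterclockwise after the root corner of
$T_{v_0}$, this leaf $e_2$ is then the first leaf of $T_{v_0}$
encountered counterclockwise after the corner-root, and therefore
coincides with the root half-edge of~$M'$. Thus, on this subfamily,
the corner-rooting of $T_{v_0}$ inherited from $M$ can be replaced
without loss of information by a leaf-rooting of $T_{v_0}$ at $e_2$,
with the additional compatibility that this root leaf is precisely
the root half-edge of~$M'$ (so that no extra rooting choice has to be
made on the $M'$ side).

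Translating this restricted bijection into generating functions is
then routine. A leaf-rooted $p$-valent tree with $\deg(v)$ leaves
contributes $t_{\deg(v)}$, and using $\cc(F)=\vv(M')$ together with
$\ff(M)=\ff(M')$, the weight $z^{\ff(M)} u^{\cc(F)-1}$ of a forested
map rewrites as $\bu\,z^{\ff(M')} u^{\vv(M')}$. Summing over all
admissible pairs yields
\[
H(z,u)\;=\;\bu\sum_{M'\text{ rooted}} z^{\ff(M')}\,u^{\vv(M')}\prod_{v\in V(M')} t_{\deg(v)}
\;=\;\bu\, M(z,u;t_1,t_2,\ldots),
\]
as claimed.

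The main obstacle is the bookkeeping in the second paragraph: one has
to verify carefully that, under the restriction to forested maps with
root edge outside~$F$, the corner-rooting of $T_{v_0}$ inherited from
$M$ and the leaf-rooting of $T_{v_0}$ at the root half-edge of~$M'$
carry exactly the same information, so that the restricted $\Phi$ is
genuinely one-to-one and no rooting choice is lost, doubled, or
silently transferred to the rooting of $M'$.
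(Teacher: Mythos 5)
Your proof is correct and follows exactly the paper's approach: restrict the contraction bijection of Lemma~\ref{lem:cont} to forested maps whose root edge lies outside $F$, note that the inherited corner-root $(e_1,e_2)$ of $T_{v_0}$ then has $e_2$ a leaf (which survives contraction as the root half-edge of $M'$), and trade the corner-rooting of $T_{v_0}$ for a leaf-rooting at $e_2$. The paper compresses this re-rooting equivalence into a single sentence; you unpack the same bookkeeping in more detail.
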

\begin{proof}
Let us consider again  the bijection used in the proof of
Lemma~\ref{lem:cont}:   the fact that the root edge of $M$ is not in
the forest $F$
means that, in the corner-rooted tree associated with the root vertex
of $M'$, the root half-edge is a leaf. It is then
equivalent to root this tree at this leaf.
\end{proof}
\begin{proof}[Proof of Proposition~{\rm\ref{prop:H}}] 
The first part of the proposition follows from the known characterization of $M$
(see~\cite[Eq.~(2.1)]{bdg2002}):
$$
M= \frac{\Gamma_1^2+\Gamma_2}{z} -z^2,
$$
where $\Gamma_1$ is given by~\eqref{G1} and 
$$
\Gamma_2=z^2R-uz\sum_{i\ge 3}\sum_{j\ge 0}{{2i+j-3}\choose {i-3,i,j}}
R^i S^j
- u^2 \left( \sum_{i\ge2}\sum_{j\ge 0} g_{2i+j-1} {{2i+j-2} \choose
  {i-2,i,j}}R^i S^j\right)^2,
$$
with $R$ and $S$ satisfying~\eqref{relS}. This gives the first part of
the proposition.  

Observe that $M(z,u;g_1, g_2, \ldots)=u  \bar M(z,u;g_1,g_2, \ldots;
g_1,g_2, \ldots)$ 
where $\bar M$ is defined just above Lemma~\ref{lem:cont}. When $p$ is
even,  the maps obtained by contracting forests have even degrees  ($g_{2k+1}=0$
for all $k$), the series $S$ given by~\eqref{relS} vanishes,
and the combination of~\eqref{relM} and~\eqref{relS} gives $\bar
M'_z(z,u;g_1,g_2, \ldots; g_1,g_2, 
\ldots)=2\bu (R-z)$.  Thus $H'_z= 
\bu M'_z= \bar M'_z=2\bu (R-z)$, as stated in~\eqref{Hprime-R}.
\end{proof}

\section{Differential equations}
\label{sec:de}
The equations established in the previous section imply that series
counting regular forested maps are D-algebraic. We compute explicitly
a few differential equations.

\subsection{The general case}
\label{sec:da-general}
\begin{theo}\label{Dalg}
The generating function $F(z,u)$ of $p$-valent forested maps is
D-algebraic (with respect to $z$). The same holds for
the series $G(z,u)$ and $H(z,u)$ of Propositions~\ref{prop:G} and~\ref{prop:H}.
\end{theo}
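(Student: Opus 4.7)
The plan is to derive D-algebraicity of $R$ and $S$ directly from the system~(\ref{defR})--(\ref{defS}) by exploiting the fact that $\Phi_1$, $\Phi_2$ and $\theta$ are \emph{bivariate D-finite} series, and then to transfer this property to $F$, $G$ and $H$ via the formulas of Theorem~\ref{thm:equations} and Propositions~\ref{prop:G} and~\ref{prop:H}.

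First I would check that $\Phi_1(x,y)$, $\Phi_2(x,y)$ and $\theta(x,y)$ are D-finite in each of the variables $x$ and $y$. Their coefficients are elementary products of factorials, so the ratios of consecutive coefficients in $i$ and in $j$ are rational functions of $i, j$; the standard translation of such hypergeometric recurrences yields linear PDEs in $x$ (respectively $y$) with polynomial coefficients in $\mathbb{Q}[x, y]$. In particular, for $k \in \{1, 2\}$ the $\mathbb{Q}(x, y)$-vector space $\mathcal{V}_k$ spanned by all iterated partial derivatives $\partial_x^a \partial_y^b \Phi_k$ has finite dimension $N_k$, and likewise for the space $\mathcal{V}_\theta$ associated with $\theta$.

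Next I would fix a basis of each $\mathcal{V}_k$ and consider, for each basis element $\psi(x, y)$, the specialisation $\widetilde\psi(z) := \psi(R(z), S(z))$. Let $\mathcal{R}$ be the subring of $\mathbb{Q}(u)((z))$ generated over $\mathbb{Q}(u, z)$ by $R$, $S$ and all these specialisations (for $k = 1, 2$ and for $\theta$). The chain rule gives $\widetilde\psi\,'(z) = R'\cdot\widetilde{\partial_x\psi} + S'\cdot\widetilde{\partial_y\psi}$, and since $\partial_x\psi, \partial_y\psi$ again lie in $\mathcal{V}_k$, this derivative belongs to $\mathcal{R}$ as soon as $R', S'$ do. Now, differentiating the defining relations $R - z = u\Phi_1(R, S)$ and $S = u\Phi_2(R, S)$ yields a $2\times 2$ linear system for $R', S'$ whose coefficients involve only the four specialisations $\widetilde{\partial_x \Phi_k}, \widetilde{\partial_y \Phi_k}$; since $R$ and $S$ vanish at $z=0$, the underlying matrix is a small perturbation of the identity, hence invertible over the field of fractions of $\mathcal{R}$. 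Inverting it shows that $R', S'$ belong to this field, so by induction the differential field generated by $\mathcal{R}$ is stable under $d/dz$ while remaining of transcendence degree at most $2 + N_1 + N_2 + N_\theta$ over $\mathbb{Q}(u, z)$. Consequently the sequence $R, R', R'', \ldots$ is algebraically dependent over $\mathbb{Q}(u, z)$, which after clearing denominators is exactly the D-algebraicity of $R$; the same argument gives D-algebraicity of $S$.

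The extension to $F$, $G$ and $H$ is then mechanical. The identity $F'_z = \theta(R, S)$ exhibits $F'_z$ as yet another element of (the field of fractions of) $\mathcal{R}$, hence $F'_z$ is D-algebraic, and so is $F$ since D-algebraicity is stable under integration. For $G$ and $H$, the expressions~(\ref{G-expr}) and~(\ref{H-expr}) realise them as polynomials in $z, R, S$ and specialisations at $(R, S)$ of bivariate hypergeometric series of the same shape as $\Phi_1, \Phi_2$, which after suitably enlarging $\mathcal{R}$ again lie in a differential ring of bounded transcendence degree. The one delicate point in this plan, and the one I would write up carefully, is the verification that $\mathcal{R}$ (or its field of fractions) really is closed under $d/dz$ --- equivalently, that the linear system for $R', S'$ can be solved inside that ring; once this is established, the D-algebraicity of all three series follows uniformly from the transcendence-degree bound.
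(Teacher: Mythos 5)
Your proposal follows essentially the same strategy as the paper's own proof: observe that $\theta$, $\Phi_1$, $\Phi_2$ are D-finite, differentiate the implicit system to solve rationally for $R'$ and $S'$ in terms of specialised partial derivatives at $(R,S)$, and conclude by a transcendence-degree bound that forces algebraic dependence among successive $z$-derivatives. The only (cosmetic) difference is that you first establish D-algebraicity of $R$ and $S$ and then transfer it to $F$, $G$, $H$, whereas the paper works directly with the tower $F^{(1)},\dots,F^{(d+3)}$ inside the field $\Q(u)(R,S,\varphi_1,\dots,\varphi_d)$; both routes rest on the identical transcendence-degree count.
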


\begin{proof} We start from the expression~\eqref{frs} of $F'(z,u)$ (as we always differentiate with respect
  to $z$, we simply denote $F'(z,u)$ for $F'_z(z,u)$).
We first observe that the doubly hypergeometric series  $\theta$, $\Phi_1$,
$\Phi_2$  are D-finite (this follows from the closure properties of
D-finite power series~\cite{lipshitz-df}).

Then, by differentiating  (\ref{defR}) and \eqref{defS} with respect to $z$, we obtain
rational expressions of $R'$ and $S'$  in terms of $u$ and the partial
derivatives $\partial \Phi_\ell/\partial x$ and  $\partial
\Phi_\ell/\partial y$, evaluated at $(R,S)$, for $\ell=1,2$. (Indeed,  differentiating
\eqref{defR} and \eqref{defS} gives  a linear system in $R'$ and
$S'$. Its determinant  is a power series in $z$ with coefficients in
$\qs[u]$. It is non-zero, since it equals $1$ at  $u=0$.)

 Let $\GK$ be the field $\Q(u)$. Using~\eqref{frs} and the previous
 point, it is now easy to prove by induction that for all $k\geq 1$,
 there exists a rational expression of $F^{(k)}(z,u)$ in
 terms of 
$$
\left\{\frac{\partial^{i+j}\Phi_\ell}{\partial x^i \partial
   y^j}(R,S),\frac{\partial^{i+j}\theta}{\partial x^i \partial
   y^j}(R,S)\right\}_{i \geq 0,j \geq 0, \ell \in \{1,2\}}
$$
with coefficients in  $\GK$. But since  $\theta$, $\Phi_1$ and
$\Phi_2$  are D-finite, the above set of series spans a 
vector space of finite dimension $d$ over $\GK(R,S)$. Therefore there exist $d$
elements $\varphi_1,\dots,\varphi_d$ in this space, and rational
functions $A_k \in \GK(x,y,x_1,\dots,x_d)$, such that $F^{(k)} (z,u)
= A_k(R,S,\varphi_1,\dots,\varphi_d)$ for all $k \geq 1$.  

Since the transcendance degree \cite[p.~254]{lang}
of 
$\GK(R,S,\varphi_1,\dots,\varphi_d)$ over $\GK$ is (at most) $d+2$,
the $d+3$ 
series  $F^{(k)} (z,u)$, for $1\le k\le
d+3$, are algebraically dependent, so that $F'$ (and thus $F$) is
D-algebraic.

The proof is similar for the series $G(z,u)$ and $H(z,u)$, with
$\theta$ replaced by the adequate D-finite series derived
from~\eqref{G-expr} and~\eqref{H-expr}. Moreover, since these two
expressions involve $z$ explicitly, the field $\qs(u)$ used in the
above argument must be replaced by $\qs(z,u)$.
 \end{proof}

\subsection{The 4-valent case}
\label{sec:de4}
We specialize the above argument to the case $p=4$. As explained in
the second remark following Theorem~\ref{thm:equations}, the series
$S$ vanishes and $F'(z,u)$ is 
given by the system~\eqref{system-simple}, with
\beq\label{theta-phi-4V}
 \theta(x) = 4 \sum_{i \geq 2} \frac{(3i-3)!}{(i-2)!i!^2} x^i
 \quad \hbox{and} \quad \Phi(x) = \sum_{i \geq 2}
 \frac{(3i-3)!}{(i-1)!^2i!} x  ^i. 
\eeq
The series
$\theta(x)$, $\Phi(x)$ and their derivatives lie in a 3-dimensional vector
space over $\qs(x)$ spanned (for instance) by $1$, $\Phi(x)$ and
$\Phi'(x)$. This follows  from the following equations, which are
easily checked:
\beq\label{phi-second}
x (27x-1 ) \Phi'' ( x )+6\Phi( x )  +6x = 0,
\eeq
\beq \label{thetrat} 
 3\theta(x)= 2(27x-1 ) \Phi'(x) -42\Phi(x) +  12x.
\eeq
By the argument described above, we can now express first $R'$, and
then $F'$ and all its
derivatives as rational functions of $u$, $R$, $\Phi(R)$ and 
$\Phi'(R)$. But since $R=z+u\Phi(R)$, this means a rational function
of $u$, $z$, $R$ and $\Phi'(R)$. We compute the explicit  expressions of
$F'$, $F''$ and $F'''$, eliminate  $R$ and $\Phi'(R)$ from these
three equations, and this gives a differential equation of order 2
and degree 7 satisfied by $F'$, the details of which are not particularly
illuminating:
$$
  \substack{9\,{F'}^{2}{F''}^{5}{u}^{6}+36\,{F'}^{2}{F''}^{3
}F'''\,{u}^{5}z+144\,{F'}^{2}{F''}^{4}{u}^{5}-12\,
 \left( 21\,z-1 \right) F'\,{F''}^{5}{u}^{5}+432\,{F'
}^{2}{F''}^{2}F'''\,{u}^{4}z-48\, \left( 24\,z-1 \right)
F'\,{F''}^{3}F'''\,{u}^{4}z
\\ +864\,{F'}^{2}{F''
}^{3}{u}^{4}-96\, \left( 27\,z-2 \right) F'\,{F''}^{4}{u}^{
4}+4\, \left( 27\,z-1 \right)  \left( 15\,z-1 \right) {F''}^{5}{u
}^{4}+1728\,{F'}^{2}F''\,F'''\,{u}^{3}z-288\, \left( 21
\,z-2 \right) F'\,{F''}^{2}F'''\,{u}^{3}z
\\
+10368\,F'\,{F'''}^{2}{u}^{2}{z}^{3}+16\, \left( 27\,z-1 \right)  \left( 
21\,z-1 \right) {F''}^{3}F'''\,{u}^{3}z+2304\,{F'}^{2}{
F''}^{2}{u}^{3}-288\, \left( 31\,z-4 \right) F'\,{F''}^{3}{u}^{3}
\\
-64\, \left( 6\,uz-162\,{z}^{2}+33\,z-1 \right) {F''}^
{4}{u}^{3}+2304\,{F'}^{2}F'''\,{u}^{2}z-2304\, \left( 6\,z-1
 \right) F' \,F''\,F'''\,{u}^{2}z \\
-192\, \left( 8\,uz-54\,{z}^{2}+29\,z-1 \right) {F''}^{2}F'''\,{u}^{2}z-768\,
 \left( 2\,u+189\,z-7 \right) {F'''}^{2}u{z}^{3}+2304\,{F'}^
{2}F''\,{u}^{2}-3072\, \left( 3\,z-1 \right) F'\,{F''}
^{2}{u}^{2}
\\-192\, \left( 24\,uz-27\,{z}^{2}+55\,z-2 \right) {F''}
^{3}{u}^{2}
-1536\, \left( 21\,z-2 \right) F'\,F'''\,uz-768\,
 \left( 12\,uz+81\,{z}^{2}+24\,z-1 \right) F''\,F'''\,uz+1536
\, \left( 9\,z+2 \right) F'\,F''\,u\\
-512\, \left( 39\,uz+81
\,{z}^{2}+51\,z-2 \right) {F''}^{2}u+36864\,F'\,z-1024\,
 \left( 12\,uz-162\,{z}^{2}+33\,z-1 \right) F'''\,z-1024\, \left( 
36\,uz+27\,z-1 \right) F''-24576\,z
=0.}
$$
As discussed in  Section~\ref{sec:final}, we conjecture that $F$ does not
satisfy a differential equation of order~2.

We have applied the same method to the series $H$ of
Proposition~\ref{prop:H}: 
$$
H(z,u)=\bu z R-\bu z^2-\Lambda(R)
$$
where
$$
\Lambda(x)= \sum_{i\ge 3} \frac{(3i-6)!}{(i-3)!(i-2)!i!}x^i
$$
satisfies
$$
30\Lambda(x)=x(27x-1) \Phi'(x)+(1-24x)\Phi(x)+3x^2.
$$
 This gives for $H$ an equation of order 2 and degree 3:
\begin{multline*}
3\, ( u+1 ) {u}^{2}{H'}^{2}H'' +  12\,{u}^{2}zH' H''  
+6\, ( u-8 ) u{H'}^{2}
+240\,H  \\
+4\, ( 6\,uz-54\,z+1 ) H'
  +4\, (3\,u{z}^{2}+30\,uH  +27\,{z}^{2}-z )H''+24\,{z}^{2}=0 .
\end{multline*}
One reason explaining this more modest size is the simplicity of the
expression~\eqref{Hprime-R} of  $H'$. 

\subsection{The cubic case}
\label{sec:cubic-de}
We start from the expression of $F'$ given in
Theorem~\ref{thm:equations}. We now have to deal with series 
$\theta$, $\Phi_1$ and $\Phi_2$ in two variables:
$$
 \theta(x,y) = 3 \sum_{i \geq 0} \sum_{\substack{j \geq 0 \\ 2i+j \geq 3 }} {\frac { \left( 4\,i+2\,j-4 \right) !}{ \left( 2i+j-3 \right) !\,
  i!^{2}j!}}
 x^iy^j, 
$$
\beq\label{phi1}
 \Phi_1(x,y) = \sum_{i \geq 1}\sum_{\substack{j \geq 0 \\ 2i+j \geq 3 }} \frac{(4i+2j-4)!}{(2i+j-2)!\,(i-1)!\,i!\,j!}x^iy^j,
\eeq
\beq\label{phi2}
 \Phi_2(x,y) = \sum_{i \geq 0}\sum_{\substack{j \geq 0 \\ 2i+j \geq 2 }} \frac{(4i+2j-2)!} {(2i+j-1)!i!^2j!}x^iy^j.
\eeq
Let us first observe that 
$$
\theta(x,y)=-2\Phi_1(x,y)+(1-y)\Phi_2(x,y)-2x-y^2.
$$
Consequently, Theorem~\ref{thm:equations} gives:
\beq \label{F-cubic}
F'=2z\bu +\bu S -(1+\bu)(2R+S^2).
\eeq
Then, the summations over the variable $j$ that occur in $\Phi_1$ and
$\Phi_2$ can be performed explicitly,
which gives to the cubic case   a one-variable flavour. Indeed,
\begin{eqnarray}
  \Phi_1(x,y) &=& \left( 1-4y \right) ^{3/2}\,{\Psi_1} \left(
t\right) -x,\label{pp1}
\\
\Phi_2(x,y) &=& \sqrt {1-4y}\,{\Psi_2} \left(t \right) 
+ \frac 1 4\left({1-\sqrt{1-4y}}\right)^2, \label{pp2}
\end{eqnarray}
where $t=x/(1-4y)^2$ and
$$
\Psi_1(z) = \sum _{i\ge 1}{\frac { \left( 4\,i-4 \right) !}{
 \left( 2\,i-2 \right) !\,i!\, \left( i-1 \right) !}}\,{z}^{i}, \quad
\Psi_2(z) = \sum_{i\ge 1}{\frac { \left( 4\,i-2 \right) !}{
 \left( 2\,i-1 \right) !\, i! ^{2}}}\,{z}^{i}.
$$
Our system thus reads:
\begin{eqnarray}
 \label{phi1enpsi}
R &=&z+u \left( 1-4\,S \right) ^{3/2}\,{\Psi_1} \left(
T\right) -uR,
\\
 \label{phi2enpsi}
S&=&u \sqrt {1-4S}\,{\Psi_2} \left(
T
 \right) 
+ \frac u 4\left({1-\sqrt{1-4S}}\right)^2,
\end{eqnarray}
with
$T=R/(1-4S)^2$.

The series $\Psi_1(z)$, $\Psi_2(z)$ 
and their derivatives lie in a 3-dimensional vector space over
$\qs(z)$ spanned (for instance) by 1,  $\Psi_1(z)$ and
$\Psi_2(z)$. This follows from the following identities, which are
easily checked:
\beq\label{ed-cubic}
(1-64z) \Psi_1'(z)+ 48\Psi_1(z)+ 2\Psi_2(z)=1 ,
 \quad \quad 
z (1-64  z) \Psi_2'(z)+ 6\Psi_1(z)+ 16  z\Psi_2(z)=8  z.
\eeq
By the argument of Section~\ref{sec:da-general}, we can now express
$R'$ and $S'$ as rational functions of $u$, $R$, $S$, $\Psi_1(T)$ and 
$\Psi_2(T)$.
But $\Psi_1(T)$ and $\Psi_2(T)$ can be expressed rationally in
terms of $z$, $u$, $R$ and $\sqrt{1-4S}$
using~\eqref{phi1enpsi} and~\eqref{phi2enpsi}. Hence
we obtain rational expressions in $u$, $z$, $R$ and $\sqrt{1-4S}$. In
fact no square root  occurs: 
\begin{eqnarray}
  R'&=& \frac{R(48z-1+16(u+1)R+2(3+u)S-8(u+1)S^2)}{D} ,\nonumber
\\
S'&=&\frac{2(3z+(u-3)R-12zS+4(u+1)RS)}D,\nonumber
\end{eqnarray}
with
$$
D=36z^2+(24z-1+24uz)R+4(u+1)RS-4(u+1)^2RS^2+4(u+1)^2R^2.
 $$
Combining  these two equations with~\eqref{F-cubic}, one can now
express $F'$, $F''$ and $F'''$ in terms of
$u$, $z$, $R$ and $S$, and then eliminate $R$ and $S$ to obtain a
differential equation of order 2 satisfied by $F'$ (of degree 17).
 For the \gf\ $G(z,u)$ of
quasi-cubic forested maps (Proposition~\ref{prop:G}), we
replace~\eqref{F-cubic} by
$$
10G= (1+\bu) \left(z-R+6zS+2(u+1)RS\right),
$$
and obtain a
differential equation of order 2 and degree 5. It becomes a bit simpler
when we rewrite $G=(W+z\bu)/2$:
\begin{multline*}
 0= \left( 3\,{u}^{4}z {{W}'} ^{4}-{u}^{3} ( 5\,W
      u-uz+z )  {{W}'} ^{3}+4\, ( u+1 ) (
      5\,W u-uz+z ) ^{2} \right) {W}''\\
 -48\,{u}^{2}z ( u+1)  {{W}'} ^{3}+8\,u ( u+1 )  ( 5\,W u-uz+z )  {{W}'} ^{2}+4\, ( u^2-1) ( 5\,W u-uz+z ){{W}'}.
\end{multline*}
 Introducing the series $W$ is natural in the solution of the
 Potts model presented in~\cite{bernardi-mbm-de}, where the above
 equation was first obtained.

\section{Combinatorics of forested trees}
\label{sec:u+1}

Equation~\eqref{F-Tutte}, and the positivity of the Tutte
coefficients, show that
the series $F(z,u)$ that counts $p$-valent forested maps has non-negative
coefficients when expanded in $(1+u)$. We say that it is \emm
$(u+1)$-positive,. Section~\ref{sec:tutte} presents several combinatorial
descriptions of $F(z,\mu-1)$ (see~\eqref{F-act},~\eqref{F-dual},~\eqref{F-sandpile},~\eqref{F-Potts}).
 This will lead us to study the asymptotic
behaviour of the coefficient of $z^n$ in $F(z,u)$ not only for $u\ge 0$, but 
for $u\ge  -1$. 
%
In this study, we will need to know
that several other series
are also $(u+1)$-positive. We  prove this thanks to a combinatorial argument
that applies to certain classes of \emm forested trees,.

\subsection{Positivity in $(1+u)$}
\label{sec:++}
Let $T$ be a tree having at least one edge,
and $\cF$ a set of spanning forests of $T$. We
define  a property of  $\cF$ that guarantees that the  
 generating function $A_\cF(u)$ that counts forests of  $\cF$  by the
 number of components 
 is $(u+1)$-positive (after division by $u$). 

Let $F\in \cF$, and let $e$ be an edge of
$T$. By \emm flipping, $e$ in the forest $F$, we mean adding $e$ to $F$ if it is not in
$F$, and  removing it from $F$ otherwise. This gives a new forest $F'$
of $T$. We
say that $e$ is \emm flippable, for $F$ if  $F'$
 still belongs to $\cF$.
We say that $\mathcal F $ is
\emph{stable} if for each $F \in \mathcal F$,
\begin{enumerate}
\item[(i)] every edge of $T$ not belonging to $F$ is flippable,
\item[(ii)] flipping a flippable
edge gives a new forest with the same set  of flippable edges.
\end{enumerate}
We say that a set $\cE$ of forested trees is \emm stable, if,
for each tree $T$, the set  of forests $F$ such that $(T,F)\in \cE$ is stable.
We consider below a \gf\ $E(z,u)$ of $\cE$, where each forested tree $(T,F)$ is
weighted by $z^n u^k$, where $n$ is the size of $T$ (number of edges,
of leaves...) and $k$  the number of components of $F$, minus $1$.
\begin{lem} \label{pose}
With the above notation, assume $\mathcal F $ is stable. Then all elements of $\cF$ have the
same number, denoted by $f$, of flippable edges. The \gf\ of forests of $\cF$,
counted by components, is
$
A_\cF(u)= u(1+u)^{f}.
$
Consequently, if $\cE$ is a stable set of forested trees, then $E(z,u)$ is $(u+1)$-positive.
\end{lem}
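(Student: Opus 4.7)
The plan is to parametrize $\cF$ explicitly via subsets of a canonical set of flippable edges. First I would observe that if $F \in \cF$ has $c(F)$ components, then $F$ has $|V(T)| - c(F)$ edges, so exactly $c(F) - 1$ edges of $T$ are missing from $F$. By hypothesis (i), every such missing edge is flippable, and adding any one of them produces a forest $F' \in \cF$ with $c(F') = c(F) - 1$. Iterating this procedure $c(F) - 1$ times reaches the full tree $T$ itself (viewed as a spanning forest with one component), which therefore lies in $\cF$; moreover every $F \in \cF$ is connected to $T$ by a chain of flips.

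Then I would apply (ii) along this chain. Since each flip preserves the set of flippable edges, the set of flippable edges of $F$ agrees with that of $T$. I denote this common set by $\Phi \subseteq E(T)$ and set $f = |\Phi|$; in particular every $F \in \cF$ has exactly $f$ flippable edges, as claimed.

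Next I would establish the bijection $\cF \simeq 2^{\Phi}$ given by $S \mapsto T \setminus S$. For the $\supseteq$ inclusion, I start from $T$ and remove the edges of $S$ one by one in any order: at each step the edge to be removed lies in $\Phi$, which equals the flippable set of the current forest, so each intermediate forest belongs to $\cF$. For the reverse inclusion, reversing the chain constructed in the first paragraph expresses any $F \in \cF$ as $T \setminus \{e_1, \ldots, e_{c(F)-1}\}$ where each $e_i$ was flippable in the corresponding intermediate forest, hence lies in $\Phi$; thus $E(T) \setminus F \subseteq \Phi$.

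Finally I would sum: since $c(T \setminus S) = 1 + |S|$, one gets
\[
A_\cF(u) \;=\; \sum_{S \subseteq \Phi} u^{1+|S|} \;=\; u(1+u)^f.
\]
For a stable set $\cE$ of forested trees, grouping the sum by the underlying tree yields $E(z,u) = \sum_T z^{n(T)} (1+u)^{f(T)}$, which is manifestly $(u+1)$-positive. The principal subtle point is recognizing that (i) is exactly what is needed to make the flip graph on $\cF$ connected with $T$ as a common hub, so that (ii) can then transport the flippable-edge set to a single invariant $\Phi$; once this is done the remainder is combinatorial bookkeeping.
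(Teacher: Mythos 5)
Your proof is correct and follows essentially the same route as the paper's: use condition (i) to show the full tree $T = F_{\max}$ is reachable from any $F \in \cF$ by adding flippable edges, invoke condition (ii) to transport the flippable-edge set to a single invariant $\Phi$, and then parametrize $\cF$ by subsets of $\Phi$. You have merely written out in full the bijection $\cF \simeq 2^{\Phi}$ and the component count that the paper states more tersely.
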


\begin{proof} Condition (i) implies that the forest $F_{\max}$
  consisting of all edges of $T$ belongs to~$\cF$. Moreover, we can
  obtain $F_{\max}$ from any forest $F$ of $\cF$ by adding iteratively
  flippable edges. By Condition (ii), this implies that any forest $F$ of
  $\cF$ has the same set of flippable edges as $F_{\max}$. It also implies
  that, to construct a forest $F$ of $\cF$, it suffices to choose, for each
  flippable edge of $F_{\max}$, whether it belongs to $F$ or not. Since
  $F_{\max}$ has a unique component, and since deleting  an edge from a forest
  increases by 1 the 
  number of components, the expression of $A_\cF(u)$ follows.
\end{proof}

\subsection{Enriched blossoming trees}
\label{sec:enriched}
We now apply the above principle to establish $(u+1)$-positivity
properties for the series  $R$ and $S$ given
by~\rm{(\ref{defR}--\ref{defS})}, and for the series $\tilde S\equiv \tilde S(z,u)$
defined by 
\beq \label{defStilde} 
\tilde S(0,u)=0, \quad \tilde S = u\,\Phi_2(z,\tilde S),
\eeq
where $\Phi_2$ is given by~\eqref{phi}.

We consider 
 plane trees rooted at a half-edge, which we draw hanging from their root as in Figure~\ref{enrichi}.
 A vertex of degree $d$ is thus seen as the parent of $d-1$  children.
A \emm subtree, 
consists of a vertex $v$ and all its descendants. It is naturally rooted at the
half-edge incident to $v$ and located just above it.
A \emm blossoming tree,
is a leaf-rooted plane tree with two
 kinds of childless vertices: 
\emm{leaves},, represented by white arrows, and
\emm{buds},, represented by black arrows.
The edges that carry leaves and buds 
 are considered as half-edges. (This means that leaves and buds are not
 actual nodes of the tree, so that a spanning forest of
 a blossoming tree does not contain any of its half-edges.)
The root half-edge does not carry any leaf or bud.
Each leaf is assigned  a \emm charge, $+1$ while each bud is
assigned a charge $-1$.  The \emm 
charge, of a blossoming tree is the difference
between the number of leaves and buds that it contains. This
definition is extended to subtrees.

\begin{defi}\label{def:enriched}
Let $p\ge 3$.  A $p$-valent blossoming tree $T$ equipped with a spanning
forest $F$ is an  \emm enriched R- (resp. S-) tree, if
\begin{enumerate}
\item[(i)] its  charge is $1$ (resp. $0$),
\item[(ii)] any subtree rooted at an edge \emm not in, $F$
has charge $0$ or $1$. 
\end{enumerate}
We also consider as an enriched R-tree a single root half-edge
carrying at a leaf (Figure~\ref{enrichi}, left).

The pair $(T,F)$ is an \emm enriched \~S-tree, if
each component of $F$  is incident to as many leaves as buds.
In this case it is also an enriched S-tree.
\end{defi}
An enriched R-tree is shown in Figure~\ref{enrichi}.
The readers who are familiar with the R- and S-trees of~\cite{bdg2002}
will recognize that 
our enriched R- and S-trees  are obtained from them by inflating each vertex of  degree $k$ into a (leaf rooted) $p$-valent tree with $k$
leaves. The following proposition should not come as a surprise for
them.
\begin{prop}\label{prop:RSSt}
Let $p\ge 3$. The series $R$, $S$ and  $\tilde S$ defined
by~\eqref{defR},~\eqref{defS} and~\eqref{defStilde} 
 count respectively enriched R-, S- and \~S-trees, by the number of leaves
 ($z$) and the number  of components in the forest ($u$). 
\end{prop}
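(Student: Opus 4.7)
The plan is to show that the generating functions $\hat R,\hat S,\hat{\tilde S}$ of enriched R-, S- and $\tilde S$-trees (weighted by leaves $z$ and forest components $u$) satisfy, respectively, \eqref{defR}, \eqref{defS} and \eqref{defStilde}. A charge count (\#leaves $-$ \#buds equals $1$ or $0$) forces the number of internal vertices, and hence the number of forest components, of such a tree with $n$ leaves to be bounded by a function of $n$; so each of these series lies in $\qs[u][[z]]$ with zero constant term. The degree-by-degree recursion used at the end of the proof of Theorem~\ref{thm:equations} shows that \eqref{defR}--\eqref{defS} admit a unique such solution, and the same argument works for \eqref{defStilde}. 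Hence $\hat R=R$, $\hat S=S$, $\hat{\tilde S}=\tilde S$.

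The combinatorial heart is a decomposition of a non-trivial enriched R-tree $(T,F)$. Let $v$ be the internal vertex adjacent to the root half-edge and $C$ the component of $F$ containing $v$. Inside the blossoming tree, $C$ is itself a leaf-rooted $p$-valent subtree with, say, $\ell\ge 1$ internal vertices and thus $(p-2)\ell+2$ half-edges: one is the root half-edge of $T$, the other $(p-2)\ell+1$ are the \emph{slots} of $C$. Each slot is either (a) a real leaf of $T$, (b) a real bud of $T$, or (c) a non-$F$ edge below which hangs a subtree; by condition~(ii), such a subtree is an enriched R- or S-tree.

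The key observation is that (a) and the "R" case of (c) contribute identically to the generating function: both add a factor $z$ (the leading term of $\hat R$) and no new forest component. Pooling them gives a uniform three-way classification of slots into \emph{R-slots} (weighted $\hat R$), \emph{S-slots} (weighted $\hat S$) and \emph{buds}. Writing $i,j,b$ for their cardinalities, condition~(i) reduces to $i-b=1$, so $b=i-1$; the slot-count identity $i+j+b=(p-2)\ell+1$ then becomes $2i+j=(p-2)\ell+2$. The shape of $C$ is counted by $t_{2i+j}$, the placement of the three slot types among the $2i+j-1$ non-root leaves of the shape by $\binom{2i+j-1}{i-1,i,j}$, and the outer factor $u$ records $C$ as a new forest component (components inside the R- and S-subtrees being already counted by their own $u$'s within $\hat R$ and $\hat S$). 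Summing over $(i,j)$ yields $\hat R=z+u\,\Phi_1(\hat R,\hat S)$.

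The same argument gives $\hat S=u\,\Phi_2(\hat R,\hat S)$, the charge-$0$ condition forcing $b=i$ (there is no trivial S-tree, so no term "$z$" appears outside the sum). For $\hat{\tilde S}$, the per-component balance condition forbids any non-trivial R-subtree at an R-slot: separating such a subtree along its non-$F$ root edge makes its forest components be full components of the ambient forest, hence balanced, so its total charge would be $0$; but an enriched R-tree has total charge $+1$, a contradiction. Thus every R-slot in a $\tilde S$-tree is a real leaf, contributing $z$ instead of $\hat R$; this replaces $\hat R$ by $z$ in the $\Phi_2$-sum and gives $\hat{\tilde S}=u\,\Phi_2(z,\hat{\tilde S})$. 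The main subtlety to be verified carefully is the identification "real leaf = trivial R-subtree", together with a clean accounting of the $u$-weight so that each component of $F$ is counted exactly once.
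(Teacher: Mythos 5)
Your proof is correct and follows essentially the same route as the paper: decompose the tree at the root component of the forest, classify its incident half-edges as buds, attached R-trees or attached S-trees (the convention in Definition~\ref{def:enriched} that a single leaf \emph{is} the trivial R-tree makes your case~(a) and the R-case of~(c) a single slot type, so there is nothing left to ``verify carefully''), impose the charge constraint to fix the number of buds, and read off $\Phi_1$, $\Phi_2$. The one spot where you supply more detail than the paper is the charge-decomposition argument showing that in an \~S-tree every R-slot is a genuine leaf; the paper states this as an immediate observation.
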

\begin{proof}
 The equations follow from a recursive decomposition of enriched trees. For instance,
 an enriched R-tree 
is either reduced to a single leaf, with no forest at all (contribution: $z$),
or contains a root node. This node belongs to a component of the
forest. This component is incident to  several
 half-edges (not belonging to the forest), one of them being the
 root half-edge. Each of the other incident half-edges can be a leaf, a bud, or
 the root of a non-trivial subtree. In this case, the  definition of
 enriched R-trees implies that this subtree is itself 
 an enriched R-tree (of charge~1)  or an enriched S-tree (of charge
 0). Since a single leave is considered as an R-tree, we can say that
 every half-edge incident to the root component of the forest carries
 either a bud, or an R- or S-tree. If there are $i$ attached
  R-trees, we must have $i-1$ buds for the tree charge to be
 1, and an arbitrary number $j$ of 
 S-trees. The root component of the forest is then a leaf-rooted tree
 with $k=2i+j$ leaves. This gives~\eqref{defR}, where the multinomial
 coefficient occurring in $\Phi_1$ describes the order in which the $i$ R-trees, the $i-1$
 buds and the $j$ S-trees are organized.

The proof of~\eqref{defS} is similar, but now as many buds as R-trees
must be attached to the root component of the forest to make the charge 0.

Finally, an \~S-tree is an S-tree in which all attached R-trees are
actually leaves. This explains that~\eqref{defStilde} is obtained
from~\eqref{defS} by replacing each occurrence of $R$ by $z$.
\end{proof}

\begin{figure}[htb]
\begin{center}
\begin{tabular}{c}
\includegraphics[scale=1.3]{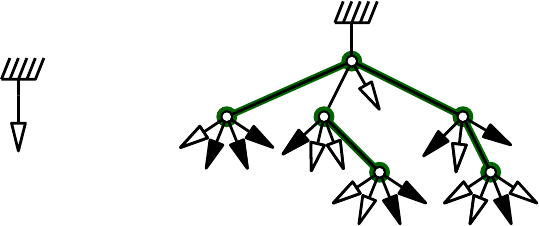}
\end{tabular}
\end{center}
\vskip -4mm
\caption{\emph{Left:}  The smallest   enriched R-tree. \emph{Right:} An
  enriched 5-valent R-tree having 10 leaves (white; charge 
  $+1$) and 9 buds (black; charge $-1$). }
\label{enrichi}
\end{figure}

We now come back to $(u+1)$-positivity.
\begin{prop}
The set of $p$-valent enriched R- (resp.~S-, \~S-) trees  having at
least one edge 
is stable, in the sense of Section~{\rm\ref{sec:++}}.
%
\end{prop}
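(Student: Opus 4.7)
My plan is to exhibit, for each fixed blossoming tree $T$ with at least one edge, a set $\mathrm{Flip}(T)$ of edges of $T$ depending only on $T$ (and on the tree type), such that for every spanning forest $F$ making $(T,F)$ an enriched tree of the relevant type, the flippable edges of $F$ are exactly those in $\mathrm{Flip}(T)$, and every non-$F$ edge lies in $\mathrm{Flip}(T)$. Condition~(i) of Section~\ref{sec:++} then follows from the latter property, and condition~(ii) from the former (since $\mathrm{Flip}(T)$ is $F$-independent). For each edge $e$ of $T$, let $T_e^{\downarrow}$ denote the subtree rooted at the endpoint of $e$ opposite the root, and $\chi(T_e^{\downarrow})$ its charge (leaves minus buds). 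I would take $\mathrm{Flip}(T)=\{e:\chi(T_e^{\downarrow})\in\{0,1\}\}$ in the R- and S-cases, and $\mathrm{Flip}(T)=\{e:\chi(T_e^{\downarrow})=0\}$ in the $\tilde S$-case.

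For $e\notin F$, adding $e$ preserves the class: the total charge of $T$ is unchanged; the family of subtrees rooted at non-forest edges only shrinks, so the charge condition of Definition~\ref{def:enriched} passes automatically from $F$ to $F\cup\{e\}$; and in the $\tilde S$-case, $e$ merges two distinct components of $F$ (distinct because $T$ is a tree) of zero charge into a single component of zero charge. Hence every non-$F$ edge is flippable. Moreover $\chi(T_e^{\downarrow})\in\{0,1\}$ in the R- and S-cases by the very defining condition of the class, and $\chi(T_e^{\downarrow})=0$ in the $\tilde S$-case because $T_e^{\downarrow}$ is partitioned into components of $F$ lying entirely below $e$, each of zero charge. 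Thus non-$F$ edges lie in $\mathrm{Flip}(T)$.

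For $e\in F$, removing $e$ splits the component $C$ of $F$ containing $e$ into a piece $C_1$ (above $e$) and a piece $C_2$ (below $e$); all other components of $F$ are unchanged. For R- and S-trees, the total charge of $T$ is unchanged, and the only new subtree rooted at a non-forest edge is $T_e^{\downarrow}$ itself, so $(T, F\setminus\{e\})$ remains in the class iff $\chi(T_e^{\downarrow})\in\{0,1\}$. For $\tilde S$-trees we need $\chi(C_1)=\chi(C_2)=0$; using $\chi(C_1)+\chi(C_2)=\chi(C)=0$ this reduces to $\chi(C_2)=0$, and the key identity $\chi(C_2)=\chi(T_e^{\downarrow})$ holds because all other components of $F$ contained in $T_e^{\downarrow}$ carry zero charge. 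In all three cases the flippability of $e$ is thus equivalent to $e\in\mathrm{Flip}(T)$, completing the argument.

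The main subtlety is exactly the telescoping identity $\chi(C_2)=\chi(T_e^{\downarrow})$ in the $\tilde S$-case, which is what makes $\mathrm{Flip}(T)$ intrinsic to $T$ and delivers both stability conditions at once. The R- and S-cases are substantially easier, since the charge condition on non-forest subtrees built into Definition~\ref{def:enriched} directly encodes the intrinsic flippability criterion; no auxiliary telescoping is needed.
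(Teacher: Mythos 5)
Your proposal is correct and takes essentially the same route as the paper: the paper's proof consists precisely of the assertion that an edge is flippable iff the subtree hanging below it has charge in $\{0,1\}$ (R/S case) or charge $0$ ($\tilde S$ case), a condition intrinsic to $T$. You have simply supplied the verification — in particular the telescoping $\chi(C_2)=\chi(T_e^{\downarrow})$ in the $\tilde S$ case — that the paper leaves implicit.
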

\begin{proof}
 For enriched R- and S-trees, an edge is flippable if and only if the
attached subtree has charge 0  or 1, and this condition is independent of the forest.

For enriched \~S-trees, an edge is flippable if and only if the
attached subtree is incident to as many leaves as buds, and this
condition is again independent of the forest.
\end{proof}

By combining this proposition with Lemma~\ref{pose} and
Proposition~\ref{prop:RSSt},  we obtain:
\begin{cor}
\label{positiv}
The series $\bu({R-z})$, $ \bu S$  {and} $\bu{\tilde S}$ are
$(u+1)$-positive.  When $u=\mu-1$, they count respectively
(non-empty)  enriched R-, S- and \~S-trees, by the number of leaves
 ($z$) and the number  of flippable edges ($\mu$). 
\end{cor}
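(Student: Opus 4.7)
My plan is to combine the combinatorial interpretation of $R$, $S$ and $\tilde S$ given by Proposition~\ref{prop:RSSt} with the stability property established by the proposition preceding the corollary, and with Lemma~\ref{pose}, in order to repackage the ``component'' statistic $u^{c(F)}$ as a ``flippable-edge'' statistic $(1+u)^{f(T)}$.

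First I would observe that, by Proposition~\ref{prop:RSSt}, $R-z$ is the generating function of non-trivial enriched R-trees $(T,F)$ with weight $z^{\ell(T)} u^{c(F)}$, where $\ell(T)$ is the number of leaves of $T$ and $c(F)$ the number of components of $F$; subtracting $z$ removes the single-leaf case, which is the unique enriched R-tree with an empty forest. Since every enriched S- or $\tilde S$-tree is necessarily non-empty (its root half-edge carries neither a leaf nor a bud, so an internal root node is present), no analogous correction is needed for $S$ or $\tilde S$.

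Next, for any of the three types and any non-trivial blossoming skeleton $T$, let $\cF_T$ be the set of spanning forests of $T$ such that $(T,F)$ is an enriched tree of the given type. The preceding proposition asserts that $\cF_T$ is stable, so Lemma~\ref{pose} applies and gives
\[
\sum_{F\in\cF_T} u^{c(F)} \;=\; u\,(1+u)^{f(T)},
\]
with $f(T)$ the common number of flippable edges. Grouping the sum by skeleton and dividing by $u$ then yields
\[
\bar u\,(R-z) \;=\; \sum_{T} z^{\ell(T)}\,(1+u)^{f(T)},
\]
and entirely analogous formulas for $\bar u\,S$ and $\bar u\,\tilde S$, the sums ranging over non-trivial enriched R-, S- and $\tilde S$-skeletons respectively. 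The right-hand side is manifestly a power series in $z$ and $1+u$ with non-negative coefficients, which proves $(u+1)$-positivity; substituting $u=\mu-1$ turns each term into $z^{\ell(T)}\mu^{f(T)}$, giving the advertised counting of non-empty enriched trees by leaves and flippable edges.

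The argument is essentially assembly, and no step seems to be a genuine obstacle. The only minor point that deserves attention is the legitimacy of grouping the sum by skeleton: this relies on the fact (implicit in the proof of the stability proposition) that both the enriched-tree condition and the notion of flippability of an edge are determined by $T$ alone, independently of the particular $F\in\cF_T$, so that $f(T)$ is a well-defined function of the skeleton.
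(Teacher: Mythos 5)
Your proposal is correct, and it takes exactly the route the paper intends: the paper supplies no written proof beyond the one sentence ``By combining this proposition with Lemma~\ref{pose} and Proposition~\ref{prop:RSSt}, we obtain,'' and you have unpacked precisely that combination (grouping by skeleton, applying Lemma~\ref{pose} per skeleton, dividing by $u$, and noting that $f(T)$ is a skeleton statistic). The only tiny point you pass over is that for $p$ odd a one-vertex enriched S- or $\tilde{\mathrm S}$-tree (and for $p$ even a one-vertex R-tree) has no edges, hence falls outside the literal hypothesis ``having at least one edge'' of Lemma~\ref{pose}; but for those the forest set is a singleton, the count is $u(1+u)^0$, and the formula holds anyway, so this is a harmless technicality already glossed over by the paper itself.
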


\noindent When $p=3$ for instance,  we have, writing $\mu=u+1$,
\begin{eqnarray*}
  \bu(R-z)&=& 2(2\mu+1)z^2+ 4(10\mu^3+12\mu^2+9\mu+4)z^3 + O(z^4),
\\
\bu S&=&
2z+6(2\mu^2+2\mu+1)z^2+ 8 (16\mu^4+28\mu^3+30\mu^2+22\mu+9)z^3+ O(z^4),
\\
\bu \tilde S &=&2z+2(2\mu^2+8\mu+5)z^2+8(2\mu^4+12\mu^3+33\mu^2+40\mu+18)z^3
+ O(z^4).
\end{eqnarray*}
We will also need the following variant of these results.

\begin{lem}
\label{positiv-bis}
Define $\Phi_2$ by~\eqref{phi} and  ${\tilde S}$
by~\eqref{defStilde}. The series $\pd {\Phi_2} y (z, \tilde S)$ is
$(u+1)$-positive.   
\end{lem}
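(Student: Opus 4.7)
The plan is to mimic the proof of Corollary~\ref{positiv}: interpret $\pd{\Phi_2}{y}(z,\tilde S)$ as the generating function of a suitable class of pointed blossoming trees, and then apply Lemma~\ref{pose} to each underlying tree. Concretely, I would call a triple $(T^*,m,F)$ a \emph{pointed enriched \~S-tree} if $T^*$ is a $p$-valent leaf-rooted blossoming tree carrying one distinguished non-root half-edge $m$ (the \emph{marker}) that bears neither a leaf nor a bud, and $F$ is a spanning forest of $T^*$ such that every component is incident to equally many leaves as buds \emph{and} $m$ lies on the root component of $F$. Differentiating $\tilde S=u\,\Phi_2(z,\tilde S)$ with respect to $y$ corresponds combinatorially to singling out and emptying one of the \~S-subtrees attached to the root component of an enriched \~S-tree; the recursive decomposition of Proposition~\ref{prop:RSSt} then yields
\[
u\,\pd{\Phi_2}{y}(z,\tilde S) = \sum_{(T^*,m,F)} z^{\ell(T^*)}\,u^{\cc(F)},
\]
where $\ell(T^*)$ denotes the number of leaves of $T^*$ and the sum runs over all pointed enriched \~S-trees.

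Next I would fix a pointed blossoming tree $(T^*,m)$ and study the set $\mathcal F(T^*,m)$ of admissible forests. The key observation is that ``$m$ lies on the root component of $F$'' is equivalent to ``$F$ contains the unique $T^*$-path $P$ from the root vertex to the vertex bearing $m$''. The flippability criterion from the proof of Corollary~\ref{positiv} then carries over: an edge $e$ is \emph{intrinsically flippable} when the subtree of $T^*$ on one side of $e$ is balanced (equal leaves and buds), and this is precisely what is needed for flipping $e$ to preserve the \~S-condition on components. From this I would deduce, first, that every non-intrinsically-flippable edge must belong to every $F\in\mathcal F$ (otherwise its unbalanced side would decompose into balanced forest components, a contradiction) and, second, that every edge of $P$ is forbidden from being flipped, since removing it would disconnect $m$ from the root component. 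Hence the flippable set of every $F\in\mathcal F$ equals the fixed set $\Phi(T^*,m)$ of intrinsically flippable edges lying outside $P$, so conditions (i) and (ii) of Lemma~\ref{pose} both hold.

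Applying Lemma~\ref{pose} then yields $\sum_{F\in\mathcal F(T^*,m)}u^{\cc(F)}=u(1+u)^{|\Phi(T^*,m)|}$, and summing over pointed blossoming trees and dividing by $u$ gives
\[
\pd{\Phi_2}{y}(z,\tilde S) = \sum_{(T^*,m)}(1+u)^{|\Phi(T^*,m)|}\, z^{\ell(T^*)},
\]
which is manifestly $(u+1)$-positive. The main obstacle is the stability step: the constraint ``$m$ lies on the root component'' is \emph{a priori} non-local in $F$, and one must first reformulate it as the edge-path condition $P\subseteq F$ before the intrinsic-flippability argument already developed for \~S-trees becomes applicable.
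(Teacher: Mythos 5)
Your proposal is correct and takes essentially the same route as the paper: you introduce the same object (an \~S-enriched tree carrying one chargeless marked half-edge constrained to lie on the root component, which is the paper's ``dangling half-edge''), you make explicit the paper's implicit observation that the root-component constraint is equivalent to the path condition $P\subseteq F$, and you arrive at the same characterization of flippable edges (intrinsically flippable in the \~S-sense and not on $P$) before invoking Lemma~\ref{pose}.
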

\begin{proof}
  Let us extend the definition of \~S-enriched trees to $p$-valent blossoming trees that, in addition to leaves and
 buds, contain also one dangling half-edge, having no charge
 (Figure~\ref{fig:dang}). Using the arguments of Proposition~\ref{prop:RSSt}, one can prove that the series 
  $u\pd {\Phi_2} y (z, \tilde S)$ counts such \~S-enriched trees for which
 the half-edge is incident to the root component (as before, $z$
 counts leaves and $u$ components).

\begin{figure}[h!]
\begin{center}
\includegraphics[scale=1.2]{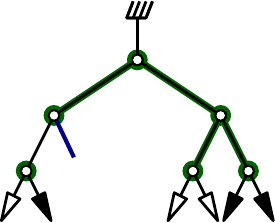}
\end{center}
\caption{A cubic enriched \~S-tree with a dangling half-edge incident to the
  root component.}
\label{fig:dang}
\end{figure}

The set of such trees is again stable:
indeed, an edge is flippable if
it is flippable in the \~S-sense, and is not on the path from the root
to the dangling half-edge.
\end{proof}

\section{Implicit functions: some general results}
\label{sec:general}
The singular behaviour of a series $Y(z)$ defined by an implicit
equation $H(z,Y(z)) = 0$ is well-understood when the singularities of
$Y$ occur at a point $z$ such that  $H$ is  analytic at $(z,Y(z))$, but $H'_y(z,
Y(z))=0$.  A typical situation is  the so-called  \emm smooth
implicit schema, of~\cite[Sec.~VII.4]{flajolet-sedgewick}, which leads
 to square root singularities in $Y$. 

However, in our asymptotic analysis of 4-valent and cubic forested
maps, we will have to deal with implicit functions $Y$ that become
singular at a point $z$  such that  $H$ ceases to be  analytic at
$(z,Y(z))$. Our series $Y$ 
 have non-negative real
coefficients, which implies that their radius is also a dominant
singularity, and leads us to pay a special attention to the behaviour
of $Y$ along the positive real axis.

In this section, we thus examine how far a real series $Y$ defined by an implicit equation
can be extended along the positive real axis. We first establish a
general result for equations of the form $H(z,Y(z)) = 0$
(Proposition~\ref{lemmeasympt2}), which will 
apply for instance to the series $\tilde S$ defined by~\eqref{defStilde}. We
then specialize this proposition  to an \emm inversion
equation, of the form $\Omega(Y(z))=z$ (Corollary~\ref{lemmeasympt}). This
corollary will apply in particular
to the series $R$ defined, in the 4-valent case, by $R=z+u\Phi(R)$ (see~\eqref{system-simple}).
\begin{prop} \label{lemmeasympt2}
Let  $H(x,y)$ be a real bivariate power series, analytic in a
neighbourhood of $(0,0)$, satisfying $H(0,0) = 0$ and $ H'_y (0,0) >
0$. 
Let $Y\equiv Y(z)$ be the unique  power series satisfying  $Y(0)=0$
and $H(z,Y(z)) = 0$.
Then $Y$ has a non-zero  radius of convergence. Moreover,  there
exists $\rho > 0$  such that:
\begin{enumerate}
	\item[(a)] $Y$ has an analytic continuation, still denoted by
          $Y$, in a neighbourhood of  $[0,\rho]$, 
which is real valued,
	\item[(b)]  
$H$ has an analytic continuation, still denoted by $H$, in a neighbourhood
          of $\{(z,Y(z)), z \in [0,\rho]\}$      ,
	\item[(c)] $H(z,Y(z)) = 0$ for $z \in [0,\rho]$,
	\item[(d)]  $H'_ y (z,Y(z)) > 0$ for $z \in [0,\rho]$.
\end{enumerate}
Moreover, if $\rt$ is the supremum (in $\R\cup \{+\infty \}$) of these
values $\rho$, at least one of the  following properties holds:
\begin{itemize}
\item[(i)] $\rt = +\infty$,
\item[(ii)] $\liminf_{z \rightarrow \rt^-} H'_y  (z,Y(z)) = 0$,
\item[(iii)] for each $y \in [\liminf_{z \rightarrow \rt^-}
  Y(z),\limsup_{z \rightarrow \rt^-} Y(z)]$, $H$ is singular at $(\rt,y)$,
\item[(iv)] $\limsup_{z \rightarrow \rt^-} |Y(z)| = + \infty$.
\end{itemize}
\end{prop}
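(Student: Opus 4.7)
My plan is to apply the analytic implicit function theorem (IFT) iteratively, extending $Y$ as far as possible along the positive real axis, and then to identify which of (i)--(iv) must obstruct the process at the supremum $\tilde\rho$. The hypothesis $H'_y(0,0)>0$, together with the analytic IFT at $(0,0)$, produces an analytic germ $Y$ with $Y(0)=0$ satisfying $H(z,Y(z))=0$, and by continuity of $H'_y$ properties (a)--(d) hold on some initial interval $[0,\varepsilon]$. Observe that if (a)--(d) hold for some $\rho$ they automatically hold for every $\rho'\le\rho$, so the set of admissible $\rho$ is an interval with left endpoint $0$; its supremum is $\tilde\rho$, which is therefore strictly positive.

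To prove the dichotomy I argue by contradiction: assume $\tilde\rho<\infty$ and that none of (ii), (iii), (iv) holds. The negation of (iv) makes $L_-:=\liminf_{z\to\tilde\rho^-}Y(z)$ and $L_+:=\limsup_{z\to\tilde\rho^-}Y(z)$ both finite; by continuity of $Y$ on $[0,\tilde\rho)$, connectedness forces the set of left limit points of $Y$ at $\tilde\rho$ to be exactly $[L_-,L_+]$. The negation of (iii) provides some $y^*\in[L_-,L_+]$ at which $H$ is analytic. Picking a sequence $z_n\nearrow\tilde\rho$ with $Y(z_n)\to y^*$ and using the negation of (ii) together with continuity of $H'_y$ near $(\tilde\rho,y^*)$, one obtains
\[
H'_y(\tilde\rho,y^*) \;=\; \lim_{n\to\infty} H'_y(z_n,Y(z_n)) \;\ge\; \liminf_{z\to\tilde\rho^-} H'_y(z,Y(z)) \;>\; 0.
\]

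A second application of the analytic IFT, now at $(\tilde\rho,y^*)$, then yields an analytic function $\tilde Y$ on a complex disk $U$ around $\tilde\rho$ with $\tilde Y(\tilde\rho)=y^*$ and $H(z,\tilde Y(z))=0$, together with a uniqueness window $U\times V$ in which the equation $H(z,y)=0$ forces $y=\tilde Y(z)$. For $n$ large, $(z_n,Y(z_n))\in U\times V$, so $Y(z_n)=\tilde Y(z_n)$. A local IFT at some such $z_{n_0}$, combined with the identity theorem applied on the connected component of the common analyticity domain of $Y$ and $\tilde Y$ containing $z_{n_0}$, gives $Y\equiv\tilde Y$ on an interval $(\tilde\rho-\eta,\tilde\rho)$ for some $\eta>0$. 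Gluing $Y$ and $\tilde Y$ thus extends $Y$ analytically past $\tilde\rho$, and properties (a)--(d) are inherited on $[0,\tilde\rho+\delta]$ by continuity, contradicting the maximality of $\tilde\rho$.

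The main obstacle is handling the case $L_-<L_+$, where $Y$ has no limit at $\tilde\rho^-$: a priori one might fear that the IFT-extension $\tilde Y$ coincides with $Y$ only along the particular subsequence converging to $y^*$, not on a whole interval. The resolution, which is the crux of the argument, is that IFT uniqueness forces the pointwise equality $Y(z_n)=\tilde Y(z_n)$ and the identity theorem then propagates it to a genuine left neighborhood of $\tilde\rho$; as a byproduct, the proof retroactively establishes $L_-=L_+=y^*$, so that $Y$ does have a limit at $\tilde\rho^-$.
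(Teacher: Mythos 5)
Your argument is correct and follows essentially the same plan as the paper's: start from the IFT at the origin, observe that the admissible $\rho$ form an interval $(0,\tilde\rho)$, assume $\tilde\rho<\infty$ and that (ii)--(iv) all fail, produce a point $y^*$ where $H$ is analytic with $H'_y(\tilde\rho,y^*)>0$, apply the IFT at $(\tilde\rho,y^*)$, and show the local solution $\tilde Y$ continues $Y$. The one place where you diverge is the last step. You establish $Y(z_n)=\tilde Y(z_n)$ only along the subsequence $z_n\to\tilde\rho$, then invoke a local IFT at one $z_{n_0}$ and the identity theorem to upgrade pointwise agreement to agreement on an interval. The paper instead exploits the nonvanishing of $H'_y$ at $(\tilde\rho,y^*)$ directly to produce a local Lipschitz bound $|H(x,y)-H(x,y')|\ge\delta|y-y'|$ on a complex neighbourhood $V$ of $(\tilde\rho,y^*)$; combined with the continuity of $Y$, this gives $(z,Y(z))\in V$ on a whole subinterval $[z_0,z_1]\subset(0,\tilde\rho)$, and hence $Y=\tilde Y$ there in one stroke, sidestepping any discussion of connected components of the common analyticity domain. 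Both routes work; the paper's is a bit more streamlined, while yours makes the role of IFT-uniqueness more visible. Two small points you should tighten: (1) you never state that $H(\tilde\rho,y^*)=0$, which is needed to apply the IFT at that point --- it follows by taking limits in $H(z_n,Y(z_n))=0$ using the continuity of $H$ at $(\tilde\rho,y^*)$; (2) when you appeal to ``the connected component of the common analyticity domain,'' you should check it actually contains a real left-neighbourhood $(\tilde\rho-\eta,\tilde\rho)$, which holds because both the complex domain of $Y$ and the disk $U$ contain such an interval around $z_{n_0}$, but this deserves a sentence.
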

We begin with a simple lemma.
\begin{lem}\label{lem:real}
Let $a<0<b$ and let $Y$ be a function analytic in a neighbourhood of $[a,b]$, whose Taylor expansion
  at $0$ has real coefficients. Then $Y$ takes real values on $[a,b]$. 
\end{lem}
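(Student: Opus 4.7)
The plan is to use the principle of analytic continuation (the identity theorem), applied to a function that detects non-reality of $Y$ on the real axis. The only slightly delicate point is making sure the domain on which we argue is connected and symmetric with respect to the real axis; this is a standard trick but worth spelling out.

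First I would fix an open neighbourhood $U$ of $[a,b]$ on which $Y$ is analytic. Since $[a,b]\subset \R$, the set $V:=U\cap \overline{U}$, where $\overline{U}=\{\bar z : z\in U\}$, is still an open neighbourhood of $[a,b]$, and it is now symmetric with respect to the real axis. Let $V_0$ be the connected component of $V$ containing the (connected) segment $[a,b]$; then $V_0$ is an open, connected, real-symmetric neighbourhood of $[a,b]$. On $V_0$ the function
\[
g(z) := Y(z) - \overline{Y(\bar z)}
\]
is well-defined and analytic, because $z\mapsto \overline{Y(\bar z)}$ is analytic whenever $Y$ is.

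Next I would show that $g$ vanishes on a non-empty open subset of $V_0$. Let $D$ be the disk of convergence of the Taylor series of $Y$ at $0$, which is contained in $U$ and meets $V_0$ in an open neighbourhood of $0$. Writing $Y(z)=\sum_{n\ge 0} a_n z^n$ with $a_n\in\R$, we get for $z\in D$
\[
\overline{Y(\bar z)} = \overline{\sum_{n\ge 0} a_n \bar z^n} = \sum_{n\ge 0} a_n z^n = Y(z),
\]
hence $g\equiv 0$ on $D\cap V_0$.

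The main (and only) step is then to invoke the identity theorem: since $g$ is analytic on the connected open set $V_0$ and vanishes on the open subset $D\cap V_0$, it must vanish identically on $V_0$. In particular, for every $z\in[a,b]\subset V_0$, $Y(z)=\overline{Y(\bar z)}=\overline{Y(z)}$, so $Y(z)\in\R$. I do not expect any real obstacle here; the only point that requires minor care is the symmetrisation $V=U\cap \overline{U}$, which is needed so that $\overline{Y(\bar z)}$ makes sense on the domain where we want to apply the identity theorem.
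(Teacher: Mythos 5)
Your proof is correct and follows essentially the same idea as the paper's: compare $Y(z)$ with $\overline{Y(\bar z)}$, observe that they agree near $0$ because the Taylor coefficients are real, and extend by the identity theorem. The only difference is that you spell out the domain symmetrisation, which the paper leaves implicit.
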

\begin{proof} 
The functions $z\mapsto Y(z)$ and  $z\mapsto \overline{ Y(\bar z)}$
are analytic and coincide in the
neighbourhood of 0 where $Y(z)$ is given by its Taylor expansion. Hence
they coincide everywhere, and $Y(z)$ is real when $z$ is real.
\end{proof}

\begin{proof}[Proof of Proposition~\ref{lemmeasympt2}]
The uniqueness of $Y$ comes from the fact that its coefficients can be
computed by induction using the equation $H(z,Y(z))=0$ and the initial
condition $Y(0)=0$ (the assumption $H'_y(0,0)\not =0$ is crucial here). Note that
these coefficients are real, so that Lemma~\ref{lem:real} will
apply. But let us first prove that $Y$ has a positive radius of convergence.
Since $H'_y  (0,0) > 0$, the analytic implicit function theorem at
$z=0$ implies the existence of a locally analytic solution $\hat Y$
to the implicit equation $H(z,\hat Y(z)) = 0$ satisfying $\hat Y(0)=0$.
The expansion of $\hat Y$ around $0$ must satisfy this equation as well (in
the world of formal power series), and thus coincides with $Y$. Hence
$Y$ has a positive radius.

Now consider the set 
$$
I = \left\{\rho >0 \ \left|\ \rho\textrm{ satisfies conditions
  (a), (b), (c), (d)} \right.\right\}.
$$ 
This is clearly an open interval of the form $(0, \rt)$, and it is
non-empty since  (a), (b), (c) and (d) hold in the neighbourhood of 0. 
Assume that none of the properties (i), (ii), (iii)
and (iv) hold at $\rt$. In particular, $\rt$ is finite. We will reach a
contradiction by proving that $\rt \in I$.

Since (iv) does not hold, $Y$ is  bounded on $[0,\rt)$. By
continuity, the set of accumulation points of $\{Y(z), z \in [0,
  \rt)\}$ is an interval, which coincides with $[\liminf_{z\rightarrow \rt^-}
  Y(z),\limsup_{z\rightarrow \rt^-} Y(z)]$.  For each $y$
  in this interval,  the point  $(\rt,y)$ is in the closure of the
  set $\{(z,Y(z)), z \in [0,\rt)\}$     where $H$ is known to be
    analytic. Since (iii) does not   hold, there exists an element  $\tilde y $
  in this interval such that $H$ is analytic at
  $(\rt,\tilde y )$. 
In particular, it is continuous at this point, and (c)
implies that  $H(\rt, \tilde y )=0$.
Finally, since (d) holds, but  (ii) does not,  $H'_ y (\rt,\tilde y ) > 0$.

 These three properties allow us to apply the analytic implicit function
 theorem: there exists an analytic function $\tilde Y$ defined in a
 neighbourhood of $\rt$ such that $H(z,\tilde Y(z)) = 0$ and $\tilde
 Y(\rt)=\tilde y $.  We want to prove that $\tilde Y$ is an analytic
 continuation  of $Y$ at $\rt$, so that, in particular, the interval
 $[\liminf_{z\rightarrow \rt^-}   Y(z),\limsup_{z\rightarrow \rt^-} Y(z)]$ is reduced to the point
 $\tilde y$.

Since $H'_y(\rt,\tilde y )>0$, there exists $\delta>0$ and a complex
neighbourhood $V$ of $(\rt,\tilde y )$ such that for $(x,y)$ and  $(x,y')$ in $V$,
$$
|H(x,y)-H(x,y')|\ge \delta |y-y'|.
$$
We can also assume that $\tilde Y(x)$ is well-defined for
$(x,y)\in V$. 

Since $(\rt,\tilde y  )$ is an accumulation point of $\{(z, Y(z)), z \in (0, \rt)\}$, and $Y$
is continuous, there exists an interval $[z_0,z_1]\subset (0, \rt)$ such that
$(z,Y(z))\in V$ for $z\in [z_0,z_1]$. Then for $z$ in this interval,
$$
0=|H(z,Y(z))-H(z,\tilde Y(z))|\ge \delta |Y(z)-\tilde Y(z)|,
$$
which shows that the analytic functions $Y$ and $\tilde Y$ coincide on
$[z_0,z_1]$. So they coincide where they are both defined, and $\tilde Y$
is an analytic continuation of $Y$ at $\rt$. This tells us that (a)
holds at $\rt$. Now (b) also holds by the choice of $\tilde y$, (c)
holds by construction of $\tilde Y$, and (d) holds as well, as argued
above.
Thus $\rt$ belongs to $I$, which cannot be true since it is  the supremum of the open
interval $I$. Hence one of the properties  (i), (ii), (iii)
and (iv) must hold.
\end{proof}

\begin{cor} \label{lemmeasympt}
Let  $\Omega(y)$ be a real power series 
such that $\Omega(0) = 0$ and $\Omega'(0) >0$. 
Let $\omega\in (0, +\infty ]$ be the first singularity of $\Omega$
on the positive real axis, if it exists, and $+\infty$ otherwise.
Let $Y\equiv Y(z)$ be the unique  power series satisfying $Y(0)=0$
and $\Omega(Y(z)) = z$. Then $Y$ has a non-zero radius of convergence. 
Moreover, there exists $\rho \in (0, \infty]$ such that:
\begin{enumerate}
	\item $Y$ has an analytic continuation, still
          denoted by $Y$, in a neighbourhood of $[0,\rho)$, 
which is real valued,
	\item $Y$ is increasing on $[0,\rho)$,
\item 
$Y(z) \in [0, \omega)$ for  $z \in [0,\rho)$,
	\item $\Omega(Y(z)) = z$ for $z \in [0,\rho)$,

	\item   $\lim_{z\rightarrow \rho ^- }Y(z)= \tau$ and $\lim_{y\rightarrow \tau^-}\Omega(y)= \rho$,
\end{enumerate}
where 
$$
\tau = \min \{ y \in [0,\omega)\ |\ \Omega'(y) = 0\}
$$ 
if this set is non-empty, and $\tau=\omega$ otherwise. 
\end{cor}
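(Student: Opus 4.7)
The approach is to apply Proposition~\ref{lemmeasympt2} to the bivariate series $H(z,y) := \Omega(y) - z$. We have $H(0,0)=0$ and $H'_y(0,0)=\Omega'(0)>0$, so the proposition yields a supremum $\tilde{\rho} \in (0,+\infty]$ such that properties (a)--(d) hold on $[0,\rho]$ for every $\rho<\tilde{\rho}$, and at $\tilde{\rho}$ at least one of (i)--(iv) is satisfied. Observe that $H$ is analytic at $(z,y)$ if and only if $\Omega$ is analytic at $y$, and $H'_y(z,y)=\Omega'(y)$, so properties (a)--(d) translate directly into analyticity and positivity statements for $\Omega$ along the curve $y=Y(z)$.

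I next check that $Y$ is strictly increasing on $[0,\tilde{\rho})$. Differentiating $\Omega(Y(z))=z$, which is valid by~(c), gives $\Omega'(Y(z))\,Y'(z)=1$, and (d) ensures $\Omega'(Y(z))>0$; hence $Y'(z)>0$. Consequently the limit $L := \lim_{z\to\tilde{\rho}^-} Y(z)$ exists in $(0,+\infty]$ and $Y([0,\tilde{\rho}))=[0,L)$. Since $\Omega$ is analytic with $\Omega'>0$ on the whole image $[0,L)$, and $\tau$ is by definition the least point of $[0,\omega)$ where $\Omega'$ vanishes (or $\omega$ if no such point exists), we immediately get $L\le\tau$.

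The reverse inequality $L\ge\tau$ is the main obstacle and is where the four-way dichotomy of Proposition~\ref{lemmeasympt2} enters. Suppose for contradiction that $L<\tau$. Then $L<\omega$, so $\Omega$ is analytic at $L$ with $\Omega'(L)>0$, which means $H$ is analytic at $(\tilde{\rho},L)$ with $H'_y(\tilde{\rho},L)>0$. Because $Y$ is monotone with finite limit $L$, the interval $[\liminf_{z\to\tilde{\rho}^-} Y(z),\ \limsup_{z\to\tilde{\rho}^-} Y(z)]$ reduces to $\{L\}$, so alternatives (ii) and (iii) both fail, and (iv) fails since $L<\infty$. Hence (i) must hold, i.e.\ $\tilde{\rho}=+\infty$; but then $z=\Omega(Y(z))\to\Omega(L)<\infty$ as $z\to+\infty$, a contradiction. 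Thus $L=\tau$, and setting $\rho := \tilde{\rho}$ the conclusions (1)--(4) follow from (a)--(d) together with the inclusion $Y(z)\in[0,\tau)\subseteq[0,\omega)$. Finally, (5) follows because $\Omega$ is continuous and strictly increasing on $[0,\tau)$, so passing to the limit in $\Omega(Y(z))=z$ as $z\to\rho^-$ (using $Y(z)\to\tau^-$) yields $\lim_{y\to\tau^-}\Omega(y)=\rho$.
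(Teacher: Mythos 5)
Your proof is correct and takes essentially the same approach as the paper: specialize Proposition~\ref{lemmeasympt2} to $H(z,y)=\Omega(y)-z$, translate conditions (a)--(d) into properties (1)--(4), obtain $\lim_{z\to\rho^-}Y(z)\le\tau$ from (d), and then rule out strict inequality by checking that each of the four alternatives (i)--(iv) leads to a contradiction. The only cosmetic difference is that you funnel the contradiction through ``(i) must hold, hence $\tilde\rho=+\infty$, which is absurd,'' whereas the paper directly observes that all four alternatives fail simultaneously.
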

\noindent This result is stated as an existence result for $\rho$,
but~(5) actually \emm determines, the value of  $\rho$. 
\begin{proof} 
We specialize Proposition~\ref{lemmeasympt2} to $H(x,y)= \Omega(y)
-x$. Clearly, $H$ is analytic around $(0,0)$, $H(0,0)=0$ and
$H'_y(0,0)=\Omega'(0)>0$. 
We take for  $\rho$ the value $\rt$ of Proposition~\ref{lemmeasympt2}. Then (1) 
 follows from (a).
Conditions (b) and (c) tell us that $\Omega$ has an analytic continuation on
$\{Y(z), z \in [0, \rho)\}$, such that $\Omega(Y(z))=z$ for $z\in [0, \rho)$.
By differentiating this identity, we obtain $Y'(z)
\Omega'(Y(z))=1$, so that (2) now follows from (d). Thus the existence of
an analytic continuation of $\Omega$ on
$\{Y(z), z \in [0, \rho)\}$ now translates into  (3).
%
The monotonicity of $Y$ also  allows us to define $Y(\rho):=\lim_{z\rightarrow \rho ^- }Y(z)$,
which is not necessarly finite.

Let us now derive (5) from the second series of properties of
Proposition~\ref{lemmeasympt2}. We have already seen (this is (3))
that $Y(\rho)\le \omega$. By Condition (d) of
Proposition~\ref{lemmeasympt2}, and by definition of $\tau$, the value
$Y(\rho)$ is also
less than or equal to $\tau$. Assume $Y(\rho) < \tau$. Then $\Omega$ is
analytic at $Y(\rho)$, and by continuity of $\Omega$ and $Y$, $\rho=\Omega(Y(\rho))<+\infty$,
so that (i) cannot hold. By definition of $\tau$, we cannot have
 (ii). It is easy to see that Conditions (iii) and (iv)  do not
hold either. So we have reached a contradiction, and $Y(\rho) =
\tau$. Returning to (4)  gives  
$\rho = \Omega(Y(\rho)) = \Omega(\tau)$.
\end{proof}

\section{Inversion of functions with a $z\ln z$ singularity}
\label{sec:inversion}
The inversion of a locally injective analytic function is a well-understood
topic: if $\Psi$ is analytic in the disk $C_s$ of radius $s$ centered at $0$ and $
\Psi(z)\sim z$ as $z\rightarrow 0$, then there exist $\rho \in (0, s)$
and $\rho'>0$, and a function $\Upsilon$ analytic on $C_\rho '$, 
taking its values in  $C_\rho $, 
such that 
$$
\forall (y,z) \in C_\rho '\times C_\rho , \quad \quad
\Psi(z)=y \Longleftrightarrow z= \Upsilon(y).
$$ 
The aim of this section is to see to what extent this can be
generalized to a function $\Psi(z)$ having a singularity in $z \ln z$
around $0$. Of course we cannot consider disks anymore, and our local
domains will be of the following form:
$$
D_{\rho , \alpha}:= \{z =re^{i\theta}: r \in (0,\rho )\ \hbox{ and } \ 
|\theta|<\alpha\}.
$$
\begin{theo}[{\bf Log-Inversion}]
 \label{lninversion}
Let $\Psi$ be analytic on $D_{s,\pi}$ for some $s > 0$. 
Assume that as $z$ tends to $0$ in this domain,
$$
\Psi(z) \sim -c z \ln z
$$
with $c>0$. 
Then for each $\alpha \in (0,\pi)$, there exist $\rho  \in( 0,s)$ and $\rho '
> 0$, and  a function $\Upsilon$ analytic in $ D_{\rho ',\alpha}$, taking its
 values in $D_{\rho , \pi}$, that satisfies
$$
\forall (y,z) \in D_{\rho ', \alpha} \times D_{\rho , \pi}, \quad \quad
\Psi(z)=y \Longleftrightarrow z= \Upsilon(y).
$$ 
%
%
Moreover, as $y\rightarrow 0$ in  $ D_{\rho ',\alpha}$,
$$
\Upsilon(y) \sim - \frac y{c \ln y}.
$$
\end{theo}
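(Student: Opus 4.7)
\textbf{My plan} is to invert $\Psi$ locally around the natural first approximation $z_0(y):= -y/(c\ln y)$ (using the principal branch of $\ln$), which is the leading-order solution of $-cz\ln z = y$, by treating $\Psi$ as a perturbation of the map $z\mapsto -cz\ln z$ and applying Rouch\'e's theorem. A preliminary step is required: the pointwise asymptotic $\Psi(z)\sim -cz\ln z$ in $D_{s,\pi}$ must be upgraded to a uniform estimate in closed subsectors. Since $\Psi$ is holomorphic on $D_{s,\pi}$, Cauchy's integral formula on a circle centred at $z\in D_{s,\beta}$ with radius proportional to $|z|\sin((\pi-\beta)/2)$ yields, for each $\beta\in(\alpha,\pi)$, the uniform estimates $\Psi(z)+cz\ln z = o(z\ln z)$ and $\Psi'(z)+c\ln z = o(\ln z)$ as $|z|\to 0$ in $D_{s',\beta}$ for some $s'\leq s$.

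Now fix $\beta\in(\alpha,\pi)$ and a small $\varepsilon>0$. For $y$ in a sufficiently small sector $D_{\rho',\alpha}$, the approximation $z_0(y)$ lies in $D_{s',\beta}$ and satisfies $|z_0(y)|\asymp |y|/|\ln y|$. On the circle $\gamma(y):=\{z : |z-z_0(y)|=\varepsilon|z_0(y)|\}$, a direct computation shows $|{-cz\ln z-y}|\geq c_1\varepsilon|y|$ for some $c_1>0$ (using that $-cz\ln z-y$ vanishes at $z_0(y)$ up to a term $o(y)$, with derivative $\sim -c\ln y$), while the uniform estimates give $|\Psi(z)-(-cz\ln z)|=o(|y|)$. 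Rouch\'e's theorem then shows $\Psi(z)-y$ has the same number of zeros inside $\gamma(y)$ as $-cz\ln z - y$, namely exactly one; call it $\Upsilon(y)$. The residue formula
\[
\Upsilon(y) = \frac{1}{2\pi i}\oint_{\gamma(y)} \frac{z\,\Psi'(z)}{\Psi(z)-y}\,dz,
\]
after deforming $\gamma(y)$ to a locally fixed contour when $y$ varies, shows that $\Upsilon$ is analytic in $y$; and the containment $|\Upsilon(y)-z_0(y)|\leq \varepsilon|z_0(y)|$ combined with letting $\varepsilon\to 0$ (shrinking $\rho'$ accordingly) yields the asymptotic $\Upsilon(y)\sim -y/(c\ln y)$.

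The final, and most delicate, step is to establish that $\Upsilon(y)$ is the unique solution of $\Psi(z)=y$ in the entire slit disk $D_{\rho,\pi}$, not merely inside $\gamma(y)$. If $z\in D_{\rho,\pi}$ satisfies $\Psi(z)=y\in D_{\rho',\alpha}$, then $-cz\ln z = y(1+o(1))$, and since the auxiliary map $z\mapsto -cz\ln z$ can be shown to be injective on a small slit disk (its derivative $-c(\ln z+1)$ has large modulus and controlled argument throughout $D_{\rho,\pi}$), $z$ must lie close to $z_0(y)$, hence inside $\gamma(y)$ once $\rho$ is chosen small enough. \textbf{The main obstacle} lies precisely here: one must control the behaviour of $\Psi$ near the slit $\arg z = \pm\pi$, ensuring that no spurious preimage of $y$ arises there. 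This is where the restriction $y\in D_{\rho',\alpha}$ with $\alpha<\pi$ is crucial, as it forces the argument of any preimage to stay bounded away from $\pm\pi$ via the injectivity of $z\mapsto -cz\ln z$ on the slit disk; the uniform subsector estimates obtained in the first paragraph are what make this propagation from $y$ to $z$ quantitative.
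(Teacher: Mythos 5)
The overall architecture of your proof is genuinely different from the paper's. You run Rouch\'e on a \emph{small} circle $\gamma(y)$ centred at the first-order approximation $z_0(y)=-y/(c\ln y)$, which gives existence and analyticity near $z_0(y)$, and then you attempt a \emph{separate} argument to show there is no other preimage anywhere in $D_{\rho,\pi}$. The paper instead runs Rouch\'e directly on a keyhole contour $\Gamma^{(\varepsilon)}$ whose interior exhausts $D_{\rho,\pi}$ (decomposed into an outer arc, an inner arc, and two rays near the slit, with a tailored lower bound $|H-y_0|$ on each piece), so that existence \emph{and} uniqueness in the full slit disk come out of a single application of Rouch\'e combined with the global injectivity and surjectivity of $H:z\mapsto -z\ln z$.

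Your version has a genuine gap in the uniqueness step, which you yourself flag as ``the most delicate.'' Two problems. First, the uniform estimate $\Psi(z)+cz\ln z=o(z\ln z)$ that you derive from Cauchy's formula is only claimed on closed subsectors $D_{s',\beta}$ with $\beta<\pi$; it does \emph{not} extend up to the slit $\arg z=\pm\pi$, and yet that is exactly the region where you must exclude spurious preimages. (The paper sidesteps this by reading the hypothesis $\Psi(z)\sim -cz\ln z$ ``as $z\to 0$ in $D_{s,\pi}$'' as a uniform estimate on the whole slit disk, which is what makes the inequality $|\Psi(z)-H(z)|\le \sin\bigl(\tfrac{\pi-\alpha}{4}\bigr)\,|z\ln|z||$ usable on the contour part $\Gamma_3$ near the slit.) Second, and more fundamentally, even granting a uniform estimate throughout $D_{\rho,\pi}$, the inference ``$H(z)=y(1+o(1))$ and $H$ is injective, so $z$ must be close to $z_0(y)$, hence inside $\gamma(y)$'' is a non sequitur: injectivity does not imply stability of the inverse. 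You need a quantitative lower bound of the type ``$|H(z)-y|\gtrsim\varepsilon|y|$ for $z$ outside $\gamma(y)$,'' i.e.\ an inverse-Lipschitz (or explicit lower-bound) estimate for $H$. This is precisely what the paper constructs, piece by piece, on $\Gamma_1,\Gamma_2,\Gamma_3$: Lemma~\ref{property1} provides it on the outer and inner arcs, and the argument estimate $|\Arg H(z)|\ge \alpha+(\pi-\alpha)/4$, combined with the elementary lemma $|\Arg a-\Arg b|\ge\beta \Rightarrow |a-b|\ge |a|\sin\beta$, provides it near the slit. Your proof does not supply any substitute for these lower bounds, so the claim that every preimage must lie inside $\gamma(y)$ is not established. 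The argument could in principle be repaired --- the proof of Proposition~\ref{Hinj} can be made quantitative to give a bound of the form $|\ln z_1-\ln z_2|\le \tfrac{\kappa}{\kappa-1}\,|\ln H(z_1)-\ln H(z_2)|$ --- but as written the step is missing.
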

The proof is rather long. The most difficult part is to prove the
existence of a unique preimage of $y$ under $\Psi$ in $
 D_{\rho , \pi}$, for each $y\in D_{\rho',\alpha}$
 (Lemma~\ref{psinj}). This preimage is of course
 $\Upsilon(y)$. Proving the analyticity of $\Upsilon$ is then  a
 simple application of the analytic implicit function theorem.   In
 order to prove Lemma~\ref{psinj}, we first study the injectivity and
 surjectivity of the function $H: z\mapsto -z \ln z$ around
 0 (Section~\ref{sec:H}), before transferring them to the function~$\Psi$
 (Section~\ref{sec:H-Psi}).

\subsection{The function $z \mapsto -z\ln z$}
\label{sec:H}
Consider the following function
$$
\begin{array}{lcccllll}
  H : &\C\setminus \R^-& \rightarrow &\C
\\
&z &\mapsto &-z\ln z,
\end{array}$$
where $\ln$ denotes the principal value of logarithm: if
$z=re^{i\theta}$ with $r>0$ and $\theta \in (-\pi, \pi)$, then $\ln z= \ln
r+i\theta$. We also define $\Arg z:=\theta$.
Let us begin with a few elementary properties of $H$.
\begin{lem} \label{argH}
The function $H$ satisfies
\beq\label{H-conj}
H(\bz) =\overline {H(z)}.
\eeq
For $z=re^{i\theta}$ with $r>0$ and $\theta \in (-\pi, \pi)$,
\beq\label{module-H}
|H(z)|=
r \sqrt{\ln ^2 r + \theta^2}.
\eeq
The arguments of $z$ and $-\ln z$ have opposite
signs. If in addition $r<1$, then $\Arg(-\ln z)\in (-\pi/2, \pi/2)$. Hence
\beq\label{H-a}
  \Arg H(z) = \Arg z + \Arg(-\ln z) 
=
\theta + \arctan\left(\frac{ \theta}{\ln{r}}\right).
\eeq
If in addition  $r \leq 1/ \sqrt e$, then
\beq\label{4e}
|\Arg H(z)| \leq \theta.
\eeq
In particular, $H(z)\not \in \R^-$.
\end{lem}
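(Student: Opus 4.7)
The lemma is a collection of elementary computations about the principal branch of $\ln$ on $\C\setminus\R^-$, so my plan is to dispatch each item in turn by a direct calculation, writing $\ln z=\ln r+i\theta$ throughout.

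The first identity is immediate: the principal logarithm satisfies $\ln \bz=\overline{\ln z}$ on $\C\setminus \R^-$, so $H(\bz)=-\bz\,\overline{\ln z}=\overline{H(z)}$. For the modulus, writing $-\ln z=-\ln r-i\theta$ gives $|{-\ln z}|=\sqrt{\ln^2 r+\theta^2}$, and multiplying by $|z|=r$ yields \eqref{module-H}. The assertion about opposite signs is the observation that $\Ima(-\ln z)=-\theta=-\Arg z$, so $\Arg(-\ln z)$ and $\Arg z$ have opposite signs (or both vanish). If additionally $r<1$, then $\Rea(-\ln z)=-\ln r>0$, so $-\ln z$ lies in the open right half-plane and $\Arg(-\ln z)\in(-\pi/2,\pi/2)$; the standard formula for the argument of a number in the right half-plane gives $\Arg(-\ln z)=\arctan\bigl(-\theta/(-\ln r)\bigr)=\arctan(\theta/\ln r)$.

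To justify \eqref{H-a}, I would note that in general $\Arg(zw)\equiv \Arg z+\Arg w \pmod{2\pi}$, so the only thing to check is that the sum $\theta+\arctan(\theta/\ln r)$ already lies in $(-\pi,\pi]$. This is automatic: both $|\theta|<\pi$ and $|\arctan(\theta/\ln r)|<\pi/2$, and since the two terms have opposite signs by the previous step, their sum has absolute value bounded by $\max(|\theta|,\pi/2)<\pi$. Hence no $2\pi$-correction is needed and \eqref{H-a} holds.

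For the final bound \eqref{4e}, the assumption $r\le 1/\sqrt e$ translates into $|\ln r|\ge 1/2$. Using the elementary inequality $|\arctan x|\le |x|$, this gives
\[
\bigl|\arctan(\theta/\ln r)\bigr|\le |\theta|/|\ln r|\le 2|\theta|.
\]
Since $\arctan(\theta/\ln r)$ has sign opposite to $\theta$, the two terms in $\Arg H(z)=\theta+\arctan(\theta/\ln r)$ partially cancel, and a short case analysis on the sign of $\theta$ (or a direct manipulation) yields $|\Arg H(z)|\le |\theta|$. Finally, combined with $|\theta|<\pi$, this shows $|\Arg H(z)|<\pi$, so $H(z)\notin \R^-$ (noting $H(z)\ne 0$ since $r>0$ and $z\ne 1$, and when $\theta=0$ one has $H(z)=-r\ln r>0$ directly). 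The whole proof is a sequence of bookkeeping steps; the only point that requires a moment's care is checking that no $2\pi$ ambiguity appears in \eqref{H-a}, which is handled by the opposite-signs observation.
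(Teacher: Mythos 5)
Your proof is correct and follows essentially the same route as the paper: parts (1)–(3) are the same direct calculation from $\ln z=\ln r+i\theta$, and for \eqref{4e} both arguments reduce to the elementary bound $|\arctan x|\le|x|$ together with $|\ln r|\ge 1/2$ (the paper states this as $\arctan x\ge x$ for $x\le 0$, which is the same thing, then reduces the case $\theta\le 0$ to $\theta\ge 0$ via \eqref{H-conj}, exactly as you sketch). One small point worth commending: you explicitly verify that no $2\pi$-correction is needed in \eqref{H-a} by observing that two quantities of opposite sign, each of absolute value less than $\pi$, sum to something of absolute value less than $\pi$; the paper leaves this implicit.
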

\begin{proof}
The first two identities are straightforward.
The first part of~\eqref{H-a} follows from the fact that the
arguments of $z$ and $-\ln z$ have opposite signs. The second part
follows from $\Arg (-\ln z)\in (-\pi/2, \pi/2)$. Let us now
prove~\eqref{4e}.  Assume $\theta\ge 0$. Then $\Arg(-\ln z)\le 0$ and
the first part of~\eqref{H-a} gives $\Arg H(z)\le \theta$.
Moreover, $\arctan x\ge x $ if $x\le 0$, and thus by the second part of~\eqref{H-a},
$$
\Arg H(z)  
\geq  
\theta + \frac{
\theta}{\ln{r}}
\geq \left(1-\frac{1} {\ln{(1/\sqrt e)}} \right) \theta
= - \theta
.$$
The case where $\theta \leq 0$ now follows using~\eqref{H-conj}.
\end{proof}
Observe that $H$ is not injective on $\C$: for instance, $H(i) =
{\pi}/{2} =H(-i)$. However, $H$ is  injective
in a (slit) neighbourhood of $0$.
\begin{prop} \label{Hinj}
The function $H : z \mapsto -z\ln z$ is injective on 
$D_{e^{-1},\pi}$.
\end{prop}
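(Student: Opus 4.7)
The plan is to take $z_1, z_2 \in D_{e^{-1},\pi}$ with $H(z_1) = H(z_2)$, write $z_j = r_j e^{i\theta_j}$ with $r_j \in (0, 1/e)$ and $\theta_j \in (-\pi, \pi)$, and show they coincide. Using the conjugate symmetry~\eqref{H-conj}, I would first reduce to the case $\theta_1 \geq 0$.

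The first step is to show that $\arg H(z)$ has the same sign as $\theta = \arg z$ throughout $D_{e^{-1},\pi}$. Starting from~\eqref{H-a} and assuming $\theta > 0$, the identity $\arg H(z) = \theta - \arctan(\theta/|\ln r|)$ combined with $\arctan(x) < x$ for $x>0$ and $|\ln r| > 1$ (the first place where the radius bound $r < 1/e$ is used) yields $\arg H(z) > 0$; by conjugate symmetry $\arg H$ vanishes exactly when $\theta$ does. Thus $H(z_1)=H(z_2)$ forces $\theta_1$ and $\theta_2$ to share a sign, and the real case $\theta_1=\theta_2=0$ is settled by the fact that $H'(r) = -\ln r - 1 > 0$ on $(0, 1/e)$, so $H$ is strictly increasing on that segment.

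The remaining, and main, case is $\theta_1, \theta_2 \in (0, \pi)$. Here I would switch to variables $a_j = -\ln r_j \in (1, \infty)$ and study the map $(a, \theta) \mapsto (f, g)$ where, by~\eqref{module-H} and~\eqref{H-a},
\[
f(a,\theta) = |H(z)| = e^{-a}\sqrt{a^2+\theta^2}, \qquad g(a,\theta) = \arg H(z) = \theta - \arctan(\theta/a).
\]
A direct computation gives $\partial_a g = \theta/(a^2+\theta^2) > 0$ and $\partial_\theta g = (a^2 - a + \theta^2)/(a^2+\theta^2) > 0$ on $(1,\infty)\times(0,\pi)$, the positivity of $\partial_\theta g$ again resting on $a > 1$. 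Hence each level set $\{g = c\}$ is a smooth graph $\theta = \theta(a)$ with $\theta'(a) = -\partial_a g/\partial_\theta g < 0$.

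The crux is then to show that $f$ is strictly monotonic along each such level curve. Differentiating $f$ along the level curve and clearing denominators leads, after a short simplification exploiting the identity $a - a^2 - \theta^2 = -(a^2 - a + \theta^2)$, to
\[
\frac{df}{da}\bigg|_{g=c} \;=\; -\,\frac{e^{-a}\bigl[(a^2 - a + \theta^2)^2 + \theta^2\bigr]}{\sqrt{a^2+\theta^2}\,(a^2 - a + \theta^2)} \;<\;0.
\]
Since $(f,g)$ takes the same value at $(a_1,\theta_1)$ and $(a_2,\theta_2)$, monotonicity along level sets of $g$ forces $a_1 = a_2$, hence $\theta_1 = \theta_2$. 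The only real obstacle is this last monotonicity computation; the other ingredients are immediate consequences of the bounds already collected in Lemma~\ref{argH}.
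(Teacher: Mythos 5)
Your proof is correct, and it is genuinely different from the one in the paper. Both arguments use the radius bound $r<1/e$ in the same essential way (to make $|\ln r|>1$), but the mechanisms are distinct. The paper works directly with the complex logarithm: from $\ln H(z)=\ln z+\ln(-\ln z)$ and $H(z_1)=H(z_2)$ it extracts the equality $|\ln z_1-\ln z_2|=|\ln(-\ln z_1)-\ln(-\ln z_2)|$, and then beats this with a vectorial mean value inequality on the convex half-plane $\{\Rea z\ge\kappa\}$ (with $\kappa>1$), which bounds the right side by $\kappa^{-1}|\ln z_1-\ln z_2|$ and forces $z_1=z_2$. That argument is short and does not need to split into real and non-real cases. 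Your approach is a real-variable one in coordinates $(a,\theta)=(-\ln r,\Arg z)$: you compute the Jacobian structure of $(a,\theta)\mapsto(|H|,\Arg H)$ and show the modulus is strictly decreasing along each level curve of the argument. It is longer and more computational, but entirely elementary --- no complex mean value inequality is invoked --- and the final identity
$$
\left.\frac{df}{da}\right|_{g=c}=-\,\frac{e^{-a}\bigl[(a^2-a+\theta^2)^2+\theta^2\bigr]}{\sqrt{a^2+\theta^2}\,(a^2-a+\theta^2)}<0
$$
is correct (I checked it; the cancellation $a-a^2-\theta^2=-(a^2-a+\theta^2)$ is the right one, and the sign hinges on $a>1$, exactly where the domain bound enters).

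One small point worth making explicit: when you say ``each level set $\{g=c\}$ is a smooth graph $\theta=\theta(a)$,'' you should note that it is a graph over a \emph{connected} interval of $a$-values, since otherwise the monotonicity of $f$ along the level set would not immediately force $a_1=a_2$. This is easy to supply: because $\partial_\theta g>0$, the map $g(a,\cdot)$ is a strictly increasing bijection from $(0,\pi)$ onto $\bigl(0,\pi-\arctan(\pi/a)\bigr)$, and the right endpoint $\pi-\arctan(\pi/a)$ is increasing in $a$; hence the set of $a>1$ for which the equation $g(a,\theta)=c$ has a solution is an interval, and $\theta(a)$ is a single smooth function on it. With that remark the argument is complete.
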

\begin{proof}
Assume there exist $z_1$ and $z_2$ in $D_{e^{-1},\pi}$ such that
$H(z_1) = H(z_2)$. By   Lemma~\ref{argH}, the value $H(z_1)$ is not
real and negative,  and thus  $\ln H(z_1) = \ln H(z_2)$.

This lemma also implies  that for $z \in D_{e^{-1},\pi}$, we have 
\beq\label{lnln}
\ln H(z) = \ln z + \ln(-\ln z).
 \eeq
 Hence
\beq \label{lll}
|\ln z_1- \ln z_2| = |\ln(-\ln z_1)-\ln(-\ln z_2)|.
\eeq
Let $\kappa= -\max (\ln |z_1|, \ln|z_2|)>1$. Then $-\ln z_1$ and
$-\ln z_2$ lie in $\{z\:|\:\Rea (z)\ge \kappa\}$.
This set is convex, so the  (vectorial) mean value inequality gives
$$
|\ln(-\ln z_1)-\ln(-\ln z_2)| \leq |\ln z_1- \ln z_2|\,\sup_{z \in
  [-\ln z_1,-\ln z_2]} |\ln '(z)| \le \frac 1 \kappa |\ln z_1-
\ln z_2|.
$$
Combining  this with~\eqref{lll} gives $|\ln z_1- \ln z_2|=0$,  so
that $z_1=z_2$.
\end{proof}

We now address the surjectivity of the map $H$.

\begin{prop} \label{inHD}
For $0<\alpha<\pi$ and $\rho$ small enough (depending on $\alpha$),  we have 
$$
 D_{-\,\rho\ln \rho,\alpha} \subset H(D_{\rho,\pi}).
$$
\end{prop}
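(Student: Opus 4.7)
The plan is to view $H$ as a homeomorphism from $D_{\rho,\pi}$ onto an open subset of $\C$ and show that this image contains $D_{-\rho\ln\rho,\alpha}$ by a topological argument: I verify that the topological boundary of $H(D_{\rho,\pi})$ in $\C$ avoids the target sector, and that at least one point of that sector does lie in $H(D_{\rho,\pi})$; since $D_{-\rho\ln\rho,\alpha}$ is connected and open, this will force the containment. Throughout I would assume $\rho < 1/e$ so that $H$ is injective on $D_{\rho,\pi}$ by Proposition~\ref{Hinj}. Being a non-constant holomorphic map, $H$ is open, so $H(D_{\rho,\pi})$ is open in $\C$ and $H$ is a biholomorphism onto its image; any boundary point of this image is a cluster value of $H$ as $z$ approaches the topological boundary $\partial D_{\rho,\pi}$, which consists of the arc $\{|z|=\rho\}$, the two sides of the slit $(-\rho,0)$, and the origin.

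Next I would check that this cluster set is disjoint from $D_{-\rho\ln\rho,\alpha}$. On the arc $z = \rho e^{i\theta}$, equation~\eqref{module-H} gives
\[
|H(z)| \;=\; \rho\sqrt{(\ln\rho)^2 + \theta^2} \;\geq\; \rho\,|\ln\rho| \;=\; -\rho\ln\rho,
\]
so the arc-limits lie outside the disk $\{|y| < -\rho\ln\rho\}$, hence outside $D_{-\rho\ln\rho,\alpha}$. On the upper side of the slit, as $\theta \to \pi^-$, equation~\eqref{H-a} yields
\[
\Arg H(z) \;\longrightarrow\; \pi - \arctan\bigl(\pi/|\ln r|\bigr),
\]
which tends to $\pi$ as $r \to 0$; for $\rho$ small enough (depending only on $\alpha$), this expression stays strictly above $\alpha$ for every $r \in (0,\rho)$, so the upper-slit limits avoid the sector $\{|\arg y| < \alpha\}$. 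The lower-slit case follows from the reflection~\eqref{H-conj}, and the tip $0$ is excluded from $D_{-\rho\ln\rho,\alpha}$ by definition.

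To finish, since $r \mapsto -r\ln r$ is increasing on $(0, 1/e)$, $H$ maps $(0, \rho) \subset D_{\rho,\pi}$ bijectively onto $(0, -\rho\ln\rho) \subset D_{-\rho\ln\rho,\alpha}$, so $H(D_{\rho,\pi}) \cap D_{-\rho\ln\rho,\alpha}$ is non-empty; combined with the disjointness from the topological boundary just established, connectedness of $D_{-\rho\ln\rho,\alpha}$ yields $D_{-\rho\ln\rho,\alpha} \subset H(D_{\rho,\pi})$. The main obstacle is the boundary analysis at the slit, where $\ln z$ has a jump discontinuity and the two sides must be tracked separately when describing the cluster set of $H$; once this is done, the estimate on the slit's argument and the elementary bound on the arc modulus together with the connectedness argument are essentially routine.
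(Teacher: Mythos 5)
Your proof is correct and follows a genuinely different route from the paper's. The paper gives a constructive, ``ray-tracing'' argument: for a fixed target $se^{i\gamma}$ it first shows, for each radius $r\in(0,\rho)$, that there is a unique angle $\theta(r)$ with $\Arg H(re^{i\theta(r)})=\gamma$ (monotonicity of $\theta\mapsto\theta+\arctan(\theta/\ln r)$ plus the condition~\eqref{cond-rho} on $\rho$, then the implicit function theorem for continuity of $r\mapsto\theta(r)$), and then applies the intermediate value theorem to $r\mapsto|H(re^{i\theta(r)})|$ to hit modulus $s$. Your argument is topological: by the open mapping theorem $H(D_{\rho,\pi})$ is open; you locate its boundary inside the cluster set of $H$ on $\partial D_{\rho,\pi}$, check (modulus bound~\eqref{module-H} on the arc, argument limit from~\eqref{H-a} on the slit, reflection~\eqref{H-conj} for the lower side, and triviality at $0$) that this cluster set misses the open, connected sector $D_{-\rho\ln\rho,\alpha}$, observe that the sector meets the image along $(0,-\rho\ln\rho)$, and conclude by connectedness. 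The paper's proof is more elementary (no open mapping theorem) and hands you the preimage explicitly; yours is shorter, sidesteps the implicit-function-theorem detour, and makes transparent why the constraint on $\rho$ is exactly what keeps the slit's image out of the target sector. One small remark: the injectivity from Proposition~\ref{Hinj} that you invoke is not actually used in this argument --- openness of the image, non-emptiness of the intersection, and disjointness from the boundary already suffice; injectivity only becomes essential later, in Lemma~\ref{psinj}.
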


\begin{proof} 
We are going to prove that the inclusion holds for every 
  $\rho\in(0,1/e)$ satisfying
\beq\label{cond-rho}
\arctan{\left(\frac{\pi}{|\ln{\rho}|}\right)}\le \pi-\alpha.
\eeq
Let us fix a complex number $se^{i \gamma}$ with $0<s<-\rho \ln \rho$
and $|\gamma|<\alpha$. We want to prove the existence of $r<\rho$ and $\theta
\in (-\pi, \pi)$ such that $H(re^{i\theta})=s e^{i \gamma}$. We
proceed in two steps.

\smallskip
\noindent  \textbf{(1) There exists a continuous function $\theta :
  (0,\rho) \rightarrow (-\pi,\pi)$ such that
  $\forall r \in(0,\rho)$,}
  $$
\Arg H(r\,e^{i\theta(r)})=\gamma.
$$

\noindent \emph{Proof\/}. Fix $r\:\in\:(0,\rho)$. For  $\theta \in
(-\pi,\pi)$,  Lemma~\ref{argH} gives
$$
f(r,\theta):=\Arg H(r e^{i\theta})=\theta + \arctan \left( \frac \theta
{\ln r}\right).
$$
Differentiating with respect to $\theta$ gives 
$$
f'_\theta(r,\theta) = 1 + \frac{1}{\left(1+\frac {\theta^2}
  {\ln^2 r} \right)\ln r } \geq 1 + \frac{1}{\ln r} > 0.
$$ 
Hence $f(r, \theta)$ is a continuous increasing function of $\theta$, sending 
$(-\pi,\pi)$ 
onto $(-\pi - \arctan (\pi /\ln r), \pi + \arctan (\pi
  /\ln r))$. Since $r<\rho$ and $\rho$ satisfies~\eqref{cond-rho}, this
interval contains $(-\alpha,  \alpha)$, and thus the value
$\gamma$. This proves the existence, and uniqueness (since $f$ increases), of $\theta(r)$.

Now in the neighbourhood of $(r, \theta(r))$, we can apply the
implicit function theorem to the equation $f(r,\theta)=\gamma$, and
this shows that $\theta$ is continuous on $(0, \rho)$.

\smallskip
\noindent \textbf{(2) 
There exists $r \in (0,\rho)$ such that $|H(r\,e^{i\theta (r)})|=s$. }

\noindent \emph{Proof\/}. The function 
$$
r\mapsto |H(r\,e^{i\theta(r)})|= r\sqrt{\ln^2 r+\theta(r)^2}
$$
is continuous on $(0, \rho)$. It tends to $0$ as $r$ tends to $0$, and
to a value at least equal to $-\rho\ln \rho$ as $r$ tends to $\rho$.
Since $\alpha <-\rho \ln \rho$,  the intermediate value theorem implies that there exists
$r\in(0,\rho)$ such that $|H(re^{i\theta(r)})|=s$. 

This completes the proof of the proposition.
\end{proof}

\subsection{The Log-Inversion Theorem}
\label{sec:H-Psi}

By combining Propositions~\ref{Hinj} and~\ref{inHD}, we see that for 
$\alpha \in (0, \pi)$ and $\rho$ small enough, every point of
$D_{-\rho\ln \rho, \alpha}$ has a unique preimage under $H$ in 
$D_{\rho, \pi}$. We now want to adapt
  this result to functions $\Psi$ that behave like $H$ in the
  neighbourhood of the origin.

\begin{lem} \label{psinj}
Let $\Psi$ be analytic 
on $D_{s,\pi}$ for some $s > 0$. Assume that as $z$ tends to 0 in
this domain,
$$
\Psi(z) \sim H(z) = -z \ln z.
$$
For all $\alpha \in (0,\pi)$, there exist $\rho \in( 0,s)$ 
and $\rho' > 0$ such that every point of $D_{\rho',\alpha}$ has a unique preimage under $\Psi$ in $D_{\rho,\pi}$.
\end{lem}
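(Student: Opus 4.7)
The strategy is to treat $\Psi$ as a perturbation of $H$ and transfer the bijectivity properties established for $H$ in Propositions~\ref{Hinj} and~\ref{inHD}. I write $\Psi(z)=H(z)\bigl(1+\eta(z)\bigr)$, where (shrinking $s<1$ if needed so that $H$ does not vanish on $D_{s,\pi}\setminus\{0\}$) the function $\eta$ is analytic on $D_{s,\pi}$ and $\eta(z)\to 0$ as $z\to 0$.

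The new key ingredient, compared with the study of $H$ itself, is a \emph{localization}: preimages under $\Psi$ of points in $D_{\rho',\alpha}$ are automatically confined to a sub-sector bounded away from the slit. Indeed, if $\Psi(z)=y$ with $y\in D_{\rho',\alpha}$, then $H(z)=y/(1+\eta(z))$, so
\[
|\arg H(z)|\le |\arg y|+|\arg(1+\eta(z))|\le \alpha+o(1)
\]
as $\rho\to 0$; combined with $|\arg z|\le |\arg H(z)|(1+o(1))$ as $|z|\to 0$ (derived from formula~\eqref{H-a}), this forces $|\arg z|\le \alpha''$ for some fixed $\alpha''\in(\alpha,\pi)$, once $\rho,\rho'$ are small enough. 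Thus all preimages of $y$ lie in $D_{\rho,\alpha''}$, where the distance from $z$ to the slit is at least $|z|\sin(\pi-\alpha'')$; Cauchy's estimates on disks of that radius (contained in $D_{s,\pi}$) then yield the uniform bound $|z\eta'(z)|\to 0$ as $z\to 0$ on $D_{\rho,\alpha''}$.

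I would then prove \emph{injectivity} of $\Psi$ on $D_{\rho,\alpha''}$ by adapting the proof of Proposition~\ref{Hinj}. Assuming $\Psi(z_1)=\Psi(z_2)$, I take logarithms (legitimate since $\Psi\sim H\notin\R^-$ near $0$) and use $\log\Psi(z)=\log z+\log(-\log z)+\log(1+\eta(z))$ to derive
\[
\log z_1-\log z_2=-\bigl[\log(-\log z_1)-\log(-\log z_2)\bigr]-\bigl[\log(1+\eta(z_1))-\log(1+\eta(z_2))\bigr].
\]
The first bracket is $o(1)\cdot|\log z_1-\log z_2|$ exactly as in Proposition~\ref{Hinj}. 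For the second, passing to $u=\log z$ (which maps $D_{\rho,\alpha''}$ onto a convex sub-strip) and integrating $\tfrac{d}{du}\log\bigl(1+\eta(e^u)\bigr)=e^u\eta'(e^u)/(1+\eta(e^u))$ along the straight segment joining $\log z_1$ to $\log z_2$, the uniform bound $|z\eta'(z)|\to 0$ again yields $o(1)\cdot|\log z_1-\log z_2|$. Altogether this forces $z_1=z_2$.

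\emph{Existence} is obtained via Rouch\'e's theorem. Choose $\alpha'\in(\alpha'',\pi)$ and apply Propositions~\ref{Hinj}--\ref{inHD} to produce $\rho_1<1/e$ such that $H$ admits an analytic inverse $H^{-1}:D_{-\rho_1\log\rho_1,\alpha'}\to D_{\rho_1,\pi}$. For $y\in D_{\rho',\alpha}$ with $\rho'$ small enough, $z_0:=H^{-1}(y)\in D_{\rho_1,\alpha''}$ (by the localization applied to $H$) sits at distance $\sim|z_0|$ from the slit. On the closed disk $B(z_0,r_0)$ with $r_0\sim|z_0|/2$, fully contained in $D_{s,\pi}$, one checks $|\Psi(z)-H(z)|=|H(z)|\,|\eta(z)|=o(|H'(z_0)|\,r_0)\le|H(z)-y|$ on the boundary, so Rouch\'e's theorem applied to $\Psi-y$ and $H-y$ produces a unique zero of $\Psi-y$ in $B(z_0,r_0)$; combined with the injectivity above, this is the unique preimage of $y$ in $D_{\rho,\pi}$. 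The main obstacle is the localization step, which bypasses the delicate potential blow-up of $\eta'$ near the slit; once preimages are confined to $D_{\rho,\alpha''}$, the perturbative arguments run smoothly.
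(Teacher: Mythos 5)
Your proposal is correct in its main ideas but follows a genuinely different route from the paper's. The paper runs a single global Rouch\'e argument on a keyhole contour $\Gamma$ surrounding all of $D_{\rho,\pi}$; it splits $\Gamma$ into an outer arc, an inner arc and two near-slit segments, and for each piece verifies $|\Psi-H|<|H-y_0|$ via two auxiliary lemmas (one giving a lower bound on $|z\ln z-z'\ln z'|$ when $|z|\ge 8|z'|$, the other giving $|a-b|\ge |a|\sin\beta$ when $|\Arg a-\Arg b|\ge\beta$). The near-slit piece $\Gamma_3$ is the delicate one, since $\eta:=\Psi/H-1$ has no useful derivative bound there.

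You instead begin with a \emph{localization}: any preimage of $y\in D_{\rho',\alpha}$ lying in $D_{\rho,\pi}$ must in fact lie in $D_{\rho,\alpha''}$ for some fixed $\alpha''\in(\alpha,\pi)$, because $\Arg H(z)=\Arg z\,(1+O(1/|\ln|z||))$ by~\eqref{H-a} and $\Arg\Psi(z)=\Arg H(z)+o(1)$. This observation dissolves the near-slit difficulty entirely, rather than confronting it as in the paper. Once preimages are confined to the sub-sector, Cauchy's estimates on disks of radius comparable to $|z|$ give $|z\eta'(z)|=o(1)$ uniformly; injectivity then follows by adapting Proposition~\ref{Hinj} perturbatively (taking logarithms and integrating the derivative of $\log(1+\eta(e^u))$ along a segment in the convex image strip), and existence follows from a \emph{local} Rouch\'e argument on a small disk around $z_0=H^{-1}(y)$. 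The modular structure (localize, then uniqueness, then existence) makes more systematic use of the perturbative decomposition $\Psi=H(1+\eta)$ and would likely generalize more easily; the paper's single-contour argument is shorter but relies on ad hoc contour estimates.

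Two small points you should tighten up in a full write-up. First, the Rouch\'e disk radius should be $r_0=c\,|z_0|$ with $c<\sin(\pi-\alpha'')$ (not necessarily $\tfrac12$), so that $B(z_0,r_0)$ avoids the slit for every $\alpha''\in(0,\pi)$. Second, the lower bound $|H(z)-y|\gtrsim |H'(z_0)|\,r_0$ on $\partial B(z_0,r_0)$ deserves a justification: since $H'(w)=-\ln w-1$ has $\Rea H'(w)\ge -\ln(|z_0|(1+c))-1\sim|\ln|z_0||$ on the segment from $z_0$ to $z$, one has $|H(z)-H(z_0)|=\bigl|(z-z_0)\int_0^1 H'\bigr|\ge r_0\,\Rea\int_0^1 H'\gtrsim r_0\,|\ln|z_0||$, which dominates $|\Psi(z)-H(z)|=|H(z)|\,o(1)\lesssim |z_0|\,|\ln|z_0||\,o(1)$ once $\rho'$ is small. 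With these details supplied, the argument is complete.
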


\begin{proof} By assumption,
$\Psi(z) -H(z)= o(z\ln z)=o(-|z| \ln |z|)$ 
as $z$ tends to 0. 
Let  $\rho \in( 0, s)$ be small enough for every $z 
\in D_{\rho,\pi}$ to satisfy
\begin{eqnarray}
|\Psi(z)-H(z)| &<& - \min\left(\frac 1 2,\sin\left(\frac{\pi-\alpha}
4\right)\right) |z| \ln |z|,
\label{eq-delta}
\\
1 + \frac 1 {\ln |z|} &>& \frac 1 2 + \frac \alpha {\pi + \alpha},
\label{c1}
%
\\
|z \ln z| &\leq &- 2 |z| \ln |z|,\label{c5}
\\
- \ln \frac{|z|}8 &\le& -2  \ln |z|,
\label{c6}
\end{eqnarray}
and assume moreover than $\rho$ is also small enough for the following
property to hold:
\beq
\label{inclusion}
 D_{- \frac \rho 8 \ln \frac \rho 8,\alpha} \subset H(D_{\frac \rho 8,\pi}).
\eeq
This inclusion is made possible by
Proposition~\ref{inHD}. Several of the above listed conditions can be
described  by an explicit upper bound on $\rho$ (for instance,
\eqref{c6} just means that $\rho \le e^{-8})$, but we will use them in
the above form and  find convenient to write them so.

Now fix $y_0 \in D_{\rho',\alpha}$ with $\rho' = - \frac \rho 8 \ln
\frac \rho 8$. We want to prove that $y_0$ has a unique preimage under
$\Psi$ in $D_{\rho, \pi}$. By~\eqref{inclusion} and  Proposition
  \ref{Hinj}, it has a unique preimage under $H$, denoted by $z_0$, in  $D_{\rho,
    \pi}$ (in fact, $|z_0|< \rho/8$). We thus want to prove that the
  functions $\Psi-y_0$ and 
    $H-y_0$ have the same number of roots in   $D_{\rho,    \pi}$, and we will
    do so using   Rouch\'e's theorem.

For $\varepsilon \in \left(0, {|z_0|}/ 8\right)$, let
$\Gamma\equiv  \Gamma^{(\varepsilon)}$ be the contour shown in
Figure~\ref{fig:contour}. The interior of  $\Gamma$ converges to $D_{\rho,
    \pi}$ as $\varepsilon \rightarrow 0$. Hence for $\varepsilon$
small enough, it contains the point $z_0$, and we just need to prove
that  $\Psi-y_0$ and    $H-y_0$ have the same number of roots inside
$\Gamma^{(\varepsilon)}$ for every small enough $\varepsilon$.
\begin{figure}[h!]
\includegraphics[scale=0.8]{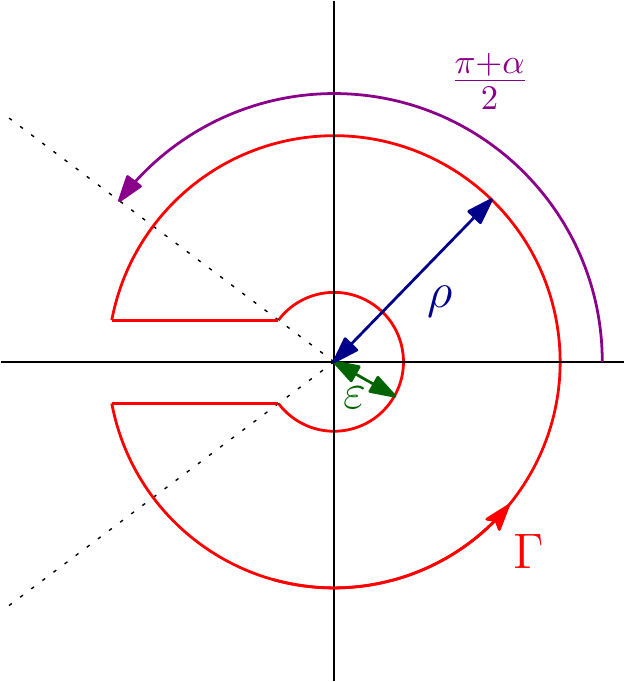} 
\caption{The contour $\Gamma^{(\varepsilon)}$.}
\label{fig:contour}
\end{figure}

By Rouch\'e's theorem, it suffices to show  that $|\Psi-H| < |H - y_0|$
on $\Gamma$. Let us  decompose $\Gamma$ into three (non-disjoint) parts :
$$
\Gamma_1 = \Gamma \cap \{ z: |z| = \rho \},
\quad \Gamma_2 = \Gamma \cap \{z:  |z| = \varepsilon \}
\quad \hbox{and} \quad \Gamma_3 = \Gamma \cap \left\{ z:|\Arg z| > \frac{\pi + \alpha} 2 \right\}.$$

We will use in the study of $\Gamma_1$ and $\Gamma_2$ the following
elementary result.
\begin{lem} \label{property1}
If $\rho \geq |z| \geq 8\,|z'|$ with $z, z' \in
  \C\setminus(-\infty, 0]$, then 
$$
|z\ln z - z' \ln z'| \geq - \frac 1 2 |z| \ln |z|.
$$
\end{lem}
\begin{proof}
We have the following lower bounds:
\begin{eqnarray*}
  |z\ln z - z' \ln z'| &\geq& |z \ln z| - |z' \ln z'|
\\
& \geq& -|z|\ln |z| + 2 |z'| \ln |z'|  \hskip 18mm \hbox{by  \eqref{module-H} 
and \eqref{c5}},
\\
&\geq&  -|z|\ln |z| + \frac {|z|} 4  \ln \frac{|z|} 8 \hskip 20mm
\hbox{because } |z'|\le |z|/8,  
\\
&\geq &- \frac 1 2 |z| \ln |z| \hskip 35mm
\hbox{by \eqref{c6}}.
\end{eqnarray*}
\end{proof}

Since $|z_0|<\rho/8$, we can apply this lemma to $z\in \Gamma_1$
and $z'=z_0$. This gives, using \eqref{eq-delta}:
$$
|H(z) - y_0| \geq - \frac 1 2 |z| \ln |z| > |\Psi(z)-H(z)| .
$$
Since $\varepsilon < |z_0|/8$, applying Lemma~\ref{property1} to $z_0$ and
$z \in \Gamma_2$ gives:
$$
|y_0-H(z)| \geq - \frac 1 2 |z_0| \ln |z_0|  \geq - \frac 1 2 |z| \ln
|z| > |\Psi(z)-H(z)|.
$$
We are left with the contour $\Gamma_3$. If $z\in \Gamma_3$, we claim that
\beq\label{arg-3}
|\Arg H(z)| \geq \alpha + \frac {\pi - \alpha} 4.
\eeq
By~\eqref{H-conj}, it suffices to prove this when $\Arg z\ge 0$. In this case,
\begin{eqnarray*} 
\Arg H(z) &=&  \Arg z +
  \arctan\left(\frac{\Arg z} {\ln |z|}\right) \hskip 18mm \hbox{by \eqref{H-a}},
\\
 &\geq &\left(1 +
  \frac 1 {\ln {|z|}}\right)  \Arg z  \hskip 28mm \hbox{since } \arctan x\ge x,
\\ & >& \left(\frac 1
  2 + \frac \alpha {\pi + \alpha}\right)  \Arg z  \hskip 26mm \hbox{by \eqref{c1}},
\\ &
  \geq &\alpha + \frac {\pi - \alpha} 4   \hskip 40mm \hbox{since }
  \Arg z \ge \frac {\pi +\alpha} 2.
\end{eqnarray*}
Hence~\eqref{arg-3} holds on $\Gamma_3$. But since $ |\Arg H(z_0)|=
|\Arg y_0|< \alpha$, we have 
\beq\label{arg-diff}
|\Arg H(z)-\Arg H(z_0)| > \frac {\pi - \alpha} 4.
\eeq

We still need one more result to conclude.
\begin{lem} For $\beta >0$ and  complex numbers $a$ and $b$ in
  $\C\setminus (-\infty,0]$,
$$
|\Arg a-\Arg b|\geq \beta \ \ \Longrightarrow \ \ |a - b| \geq
|a|\sin\beta.
$$
\end{lem}
\begin{proof} (1) If $|\Arg a-\Arg b| \leq \frac \pi 2$, then
$$
|a-b|
= \left||a|e^{i (\Arg a-\Arg b)} - |b| \right| \geq  \left|\Ima \left(|a|e^{i( \Arg a-\Arg b)}\right)\right| \geq |a|\sin\beta.$$
(2) If $|\Arg a-\Arg b| \geq \frac \pi 2$, then
\begin{multline*} 
|a-b|
 = \left||a| - |b|e^{i( \Arg b-\Arg a)} \right|  \geq
 \left|\Rea \left(   |a| - |b|e^{i( \Arg b-\Arg a)}
 \right)\right| \\ 
 = |a| - |b| \cos\left(\Arg b-\Arg a\right)  \geq |a| \geq |a|\sin\beta.
\end{multline*}
\end{proof}
By applying this lemma to~\eqref{arg-diff} with $\beta= (\pi-\alpha)/4$, we obtain
\begin{eqnarray*}
 |H(z) - y_0| &\geq & |H(z)| \sin\left(\frac{\pi-\alpha} 4\right)
\\
& \geq &-  \sin\left(\frac{\pi-\alpha} 4\right) |z| \ln |z|  \hskip
20 mm \hbox{by \eqref{module-H}}\\
&>& |\Psi(z)-H(z)| \hskip 34mm \hbox{by \eqref{eq-delta}}.
\end{eqnarray*}
We have finally proved that $|\Psi(z)-H(z)|<|H(z)-y_0|$ everywhere on
the contour $\Gamma\equiv \Gamma^{(\varepsilon)}$, and we can now conclude that
$\Psi-y_0$ has, like $H-y_0$, a unique root in $D_{\rho, \pi}$.
\end{proof}

We are finally ready to prove the log-inversion theorem
(Theorem~\ref{lninversion}). 

\begin{proof}[Proof of Theorem~\ref{lninversion}]
Upon writing $\Psi=c \Psi_1$ and $\Upsilon(y)=\Upsilon_1(y/c)$, we can
assume without loss of generality that $c=1$. We then choose $\rho$
and $\rho'$ as in Lemma~\ref{psinj}. For $y_0 \in D_{\rho ' ,\alpha}$, we define $\Upsilon(y_0 )$ as the unique point
$z_0$ of $D_{\rho, \pi}$  such that $\Psi(z_0 ) = y_0$.
We now apply the analytic implicit function theorem to the equation $\Psi(\Upsilon(y)) = y$, in the
neighbourhood of $(y_0 , z_0 )$. The function $\Psi$ is analytic at $z_0$ and locally injective by Lemma~\ref{psinj}.
Therefore $\Psi' (z_0 ) \not = 0$, and there exists an analytic
function $\bar \Upsilon$ defined in the neighbourhood of $y_0$ such
that $\bar \Upsilon(y_0)= z_0$ and $\Psi (\Upsilon(y)) = y$ in this neighbourhood.

This forces $\Upsilon(y)$ and $\bar \Upsilon(y)$ to coincide in a neighbourhood of $y_0$, and implies that $\Upsilon$ is analytic
at $y_0$ --- and hence in the domain $D_{\rho', \alpha}$.

Let us conclude with the singular behaviour of $\Upsilon$ near
$0$. The equation $\Psi(\Upsilon(y))=y$, combined with $\Psi(z)\sim -z
\ln z$, implies that $\Upsilon(y) \rightarrow 0$ as $y\rightarrow 0$. Thus
$$
y \sim -\Upsilon(y)\ln(\Upsilon(y))
$$
 as  $y\rightarrow 0$. Upon
taking logarithms, and using~\eqref{lnln}, this gives
$$
\ln y \sim \ln(\Upsilon(y)) +\ln (-\ln(\Upsilon(y))) \sim
\ln(\Upsilon(y)) .
$$
Combining the last two equations
finally gives $\Upsilon(y) \sim
-y/\ln y$.
\end{proof}

\section{Asymptotics for 4-valent forested  maps}
\label{sec:asympt-4}

 Let $F(z,u) = \sum_{n} f_n(u) z^n$ be the \gf\ of 4-valent forested
 maps, given by Theorem~\ref{thm:equations}. That is, $f_n(u)$ counts
 forested 4-valent maps with $n$ faces by the number of non-root
 components. As recalled in Section~\ref{sec:tutte}, the polynomial
 $f_n(\mu-1)$ has several interesting combinatorial descriptions in
 terms of maps equipped with an additional structure, and we will
 study the asymptotic behaviour of $f_n(u)$ for  any $u\ge -1$. 

Recall that $F(z,u)$ is characterized by~\eqref{system-simple} where
$\theta$ and $\Phi$ are given by~\eqref{theta-phi-4V}.
As discussed after Theorem~\ref{thm:equations}, $F(z,0)$ is explicit
and given by~\eqref{Fz0}:
$$
  F(z,0)=\int \theta(z) dz= 
4 \sum_{i \geq 2} \frac{(3i-3)!}{(i-2)!i!(i+1)!} z^{i+1}, 
$$
which makes the case $u=0$ of the following theorem a simple
application of Stirling's formula.

\begin{theo}\label{tetra} 
Let $p=4$, and take $u\ge -1$.  The radius of convergence of
    $F(z,u)$ is
\beq\label{rho}
\rho_u =\tau -u\Phi(\tau) 
\eeq
where $\Phi$ is given by~\eqref{theta-phi-4V} and
$$\left\{
 \begin{array}{lll}
\tau=1/27 & \hbox{if } u\le 0,
\\
 1-u\Phi'(\tau)=0 &\hbox{if } u>0.
\end{array}\right.
$$
The later condition determines  a unique $\tau \equiv \tau_u$
 in $(0,1/27)$. 

In particular, $\rho_u$  is an  affine function of $u$  on  $[-1,0]$:
\beq\label{rho-affine}
\rho_u= \frac 1 {27} -u\Phi\left(\frac 1 {27}\right)= \frac {1+u} {27}
- u \frac{\sqrt 3}{12\pi} .
\eeq
The function $\rho_u$ is decreasing, real-analytic everywhere except
at $0$, where it is still infinitely differentiable: as $u\rightarrow
0^+$,
\beq\label{rho-0}
\rho_u=
\frac 1 {27} -u\Phi\left(\frac 1 {27}\right)+O\left(\exp\left(-
\frac{2\pi}{\sqrt 3 u}\right)\right).
\eeq

Let    $f_n(u)$ be the coefficient of $z^n$ in $F(z,u)$. There exists a
    positive constant $c_u$
%
such that
$$
f_n(u) \sim \left\{\begin{array}{lll}
\displaystyle  c_u\, {\rho_u^{-n}}n^{-3}(\ln n)^{-2}& \hbox{ if } u\in[-1,0),\\
\displaystyle c_u\, {\rho_u^{-n}}{n^{-3} }  & \hbox{ if }  u = 0,\\
\displaystyle  c_u \,{\rho_u^{-n}}{n^{- 5 /2}}  & \hbox{ if }  u > 0.
\end{array}\right.
$$
The constant $c_u$ is given explicitly in Propositions~\ref{prop:4v-pos} (for
$u>0$) and~\ref{prop:4v-neg} (for $u<0$), and $c_0=2/(9\sqrt 3 \pi)$.  
\end{theo}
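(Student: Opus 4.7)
The plan is to reduce the asymptotic analysis of $f_n(u)$ to the singular analysis of the series $R(z,u)$ defined by the inversion equation $\Omega(R)=z$ with $\Omega(y):=y-u\Phi(y)$, as given by~\eqref{system-simple}. Since $F'_z=\theta(R)$ and both $\theta$ and $\Phi$ are hypergeometric series with radius $1/27$, the singular behaviour of $F$ is controlled by that of $R$. The first step is to apply Corollary~\ref{lemmeasympt} to $\Omega$: because $\Phi$ has non-negative coefficients, $\Phi'$ is positive and increasing on $(0,1/27)$; from the ODE~\eqref{phi-second} one sees that $y=1/27$ is a regular singular point whose indicial analysis yields a logarithmic solution, so $\Phi'(y)\to +\infty$ as $y\to 1/27^-$. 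Thus for $u>0$ there is a unique $\tau_u\in(0,1/27)$ with $\Omega'(\tau_u)=1-u\Phi'(\tau_u)=0$, while for $u\le 0$ we have $\Omega'(y)\ge 1$ on $(0,1/27)$ and hence $\tau=1/27$. In both cases $\rho_u=\Omega(\tau)=\tau-u\Phi(\tau)$, which gives~\eqref{rho} and the affine formula~\eqref{rho-affine} on $[-1,0]$ (the value $\Phi(1/27)=\sqrt3/(12\pi)-1/27$ can be extracted from a hypergeometric evaluation or from the ODE). Monotonicity and the exponentially small gap~\eqref{rho-0} at $0^+$ follow from the implicit equation $1-u\Phi'(\tau_u)=0$ together with the $\log(1-27\tau_u)$-type divergence of $\Phi'$.

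The singular behaviour of $R$ at $\rho_u$ splits into three cases. For $u>0$ we are in the classical smooth implicit-function regime: $\Omega$ is analytic at $\tau_u<1/27$ with $\Omega'(\tau_u)=0$ and $\Omega''(\tau_u)\neq 0$, so $R(z,u)=\tau_u-\sqrt{2/\Omega''(\tau_u)}\sqrt{\rho_u-z}+O(\rho_u-z)$ in a $\Delta$-domain. For $u=0$ we have $R=z$ and $F(z,0)=\int\theta$ is explicit; the singular part of $\theta$ at $z=1/27$ is determined by the local expansion of $\Phi$ combined with~\eqref{thetrat}, yielding after integration a $(1-27z)^{3}$-type (log-free) singularity in $F$. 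For $u<0$, which is the delicate case, one translates by setting $w=1/27-R$ and $z'=\rho_u-z$ and considers $\tilde\Omega(w):=\Omega(1/27)-\Omega(1/27-w)$. Writing the local form $\Phi(y)=A(y)+B(y)(1-27y)\ln(1-27y)$ with $A,B$ analytic near $1/27$ (forced by the ODE~\eqref{phi-second}), one finds
\[
\tilde\Omega(w)\sim -c\,w\ln w,\qquad c=-27uB(1/27)>0,
\]
so that the Log-Inversion Theorem~\ref{lninversion} applies and produces an analytic continuation of $R$ in a domain $D_{\rho',\alpha}$ with $1/27-R(z,u)\sim (\rho_u-z)/(c\ln(1/(\rho_u-z)))$ as $z\to\rho_u$.

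The final step is to transfer these local expansions into coefficient asymptotics via the singularity analysis of~\cite{flajolet-sedgewick}. For $u>0$, $\theta(R)-\theta(\tau_u)\sim \theta'(\tau_u)(R-\tau_u)$ has a $(\rho_u-z)^{1/2}$ singularity, and integrating gives $F$ a $(\rho_u-z)^{3/2}$ leading singular term, whence $f_n(u)\sim c_u\rho_u^{-n}n^{-5/2}$ with $c_u=\theta'(\tau_u)\sqrt{2/\Omega''(\tau_u)}/\Gamma(-3/2)$ (up to sign conventions). For $u=0$, direct application of Stirling to~\eqref{Fz0} yields $f_n(0)\sim (2/(9\sqrt3\pi))\,27^n n^{-3}$. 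For $u<0$, the formula~\eqref{thetrat} expresses $\theta$ as a linear combination involving $(27x-1)\Phi'$, so that $\theta(R)-\theta(1/27)$ inherits from $\Phi$ a singular part of order $(1-27R)^2\ln(1-27R)$; substituting the Log-Inversion asymptotic for $R$ one obtains $F'_z - (\text{analytic}) \sim C(z-\rho_u)^2/(\ln(\rho_u-z))^2$, and integrating then applying the transfer theorem for logarithmic singularities (Theorem VI.2 of~\cite{flajolet-sedgewick}) delivers $f_n(u)\sim c_u\rho_u^{-n}n^{-3}(\ln n)^{-2}$.

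The principal obstacle is the case $u<0$: one must verify all hypotheses of Theorem~\ref{lninversion} from the implicit data of $\Phi$, identify the constant $B(1/27)$ from the ODE~\eqref{phi-second} in order to compute $c_u$ explicitly, and, crucially, upgrade the local analytic continuation of $R$ to a genuine $\Delta$-domain on which the transfer theorem applies. A secondary technical point is ensuring that the singular part of $\theta(R)$ is really governed by the $\log$-corrected expansion of $R$ and not killed by cancellations coming from the analytic part $A(y)$; the identity~\eqref{thetrat} is precisely what guarantees this.
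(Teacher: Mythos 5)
Your architecture (locate $\rho_u$ via the inversion equation $\Omega(R)=z$, smooth implicit schema for $u>0$, Log-Inversion for $u<0$, then transfer) matches the paper's, and the $u>0$ and $u=0$ cases are essentially correct. The $u<0$ case, however, has a genuine gap.

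First, the singular-order claim for $\theta$ is wrong: from the expansion~\eqref{sing-theta}, $\theta(1/27-\varepsilon)$ has leading singular part $\frac{2\sqrt3}{\pi}\varepsilon\ln\varepsilon$, i.e.\ order $(1-27R)\ln(1-27R)$, one full order lower than the $(1-27R)^2\ln(1-27R)$ you assert. This is not a typo-level issue: if you substitute the Log-Inversion asymptotic $\varepsilon=1/27-R\sim C\,\delta/\ln\delta$ (with $\delta=\rho_u-z$) into $\varepsilon\ln\varepsilon$, the leading contribution is $C\delta$, which is \emph{analytic} in $\delta$, and the genuine singular part is hidden in the subleading corrections to $R$ that Theorem~\ref{lninversion} does not supply (it only gives $\Upsilon(y)\sim -y/(c\ln y)$, with no error term finer than $o(\,\cdot\,)$). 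Consequently your claimed intermediate estimate $F'_z-(\mathrm{analytic})\sim C(\rho_u-z)^2/\ln^2(\rho_u-z)$ cannot be derived this way, and it would in any case transfer to $n^{-4}(\ln n)^{-3}$ after dividing by $n+1$, not the stated $n^{-3}(\ln n)^{-2}$. The paper explicitly flags this obstacle (``the estimate of $R$, combined with the expansion of $\theta$, does not give immediately the singular behaviour of $F'$'') and sidesteps it: it differentiates once more and uses the closed form $F''(z)=\theta'(R)/(1-u\Phi'(R))$, in which the $\ln\varepsilon$ divergences of $\theta'$ and $1-u\Phi'$ cancel, leaving $F''(z)+4\bu\sim 72\sqrt3\pi\bu^2\rho/\ln(1-z/\rho)$; the transfer theorem then applies directly and gives the correct exponent. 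If you want to stick with $F'$ rather than $F''$, you must push the Log-Inversion expansion of $R$ to the next order in $1/\ln\delta$ and track the cancellations explicitly — which is precisely the ``other route, more direct to avoid'' the paper alludes to.

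A second, smaller gap: for the transfer theorem to apply at $u<0$ you need $R$ (hence $F'$) to be analytic in a full $\Delta$-domain, i.e.\ no other dominant singularities on $|z|=\rho_u$. You note this must be established but do not supply an argument; the paper proves it via a lemma showing $R(z)\notin[1/27,\infty)$ for $|z|\le\rho$, $z\ne\rho$, using the mean-value inequality for $\mathcal R'=\bu(R'-1)$ (whose coefficients are non-negative) together with Corollary~\ref{positiv}. Without this, the local estimate near $\rho_u^-$ cannot be converted into coefficient asymptotics.

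Finally, a cosmetic remark for $u>0$: $\Omega''(\tau)=-u\Phi''(\tau)<0$, so $\sqrt{2/\Omega''(\tau_u)}$ is imaginary as written; the correct constant involves $\gamma=\sqrt{2\rho/(u\Phi''(\tau))}$ as in Proposition~\ref{prop:4v-pos}.
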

The exponent $-5/2$ found for
$u>0$ is standard for planar maps (see for instance Tables 1 and 2 in~\cite{banderier-maps}). The behaviour for $u<0$ is
much more surprising, and, to our knowledge, it is the first time that
it is observed in the world of maps. 
A plot of $\rho_u$ is shown in Figure~\ref{fig:rhou}. Note that
$\rho_{-1}=\sqrt 3/(12 \pi)$, a transcendental radius for the series
counting 4-valent maps equipped with an internally inactive spanning
tree.

\begin{figure}[h!]
\begin{center}
\includegraphics[scale=0.3]{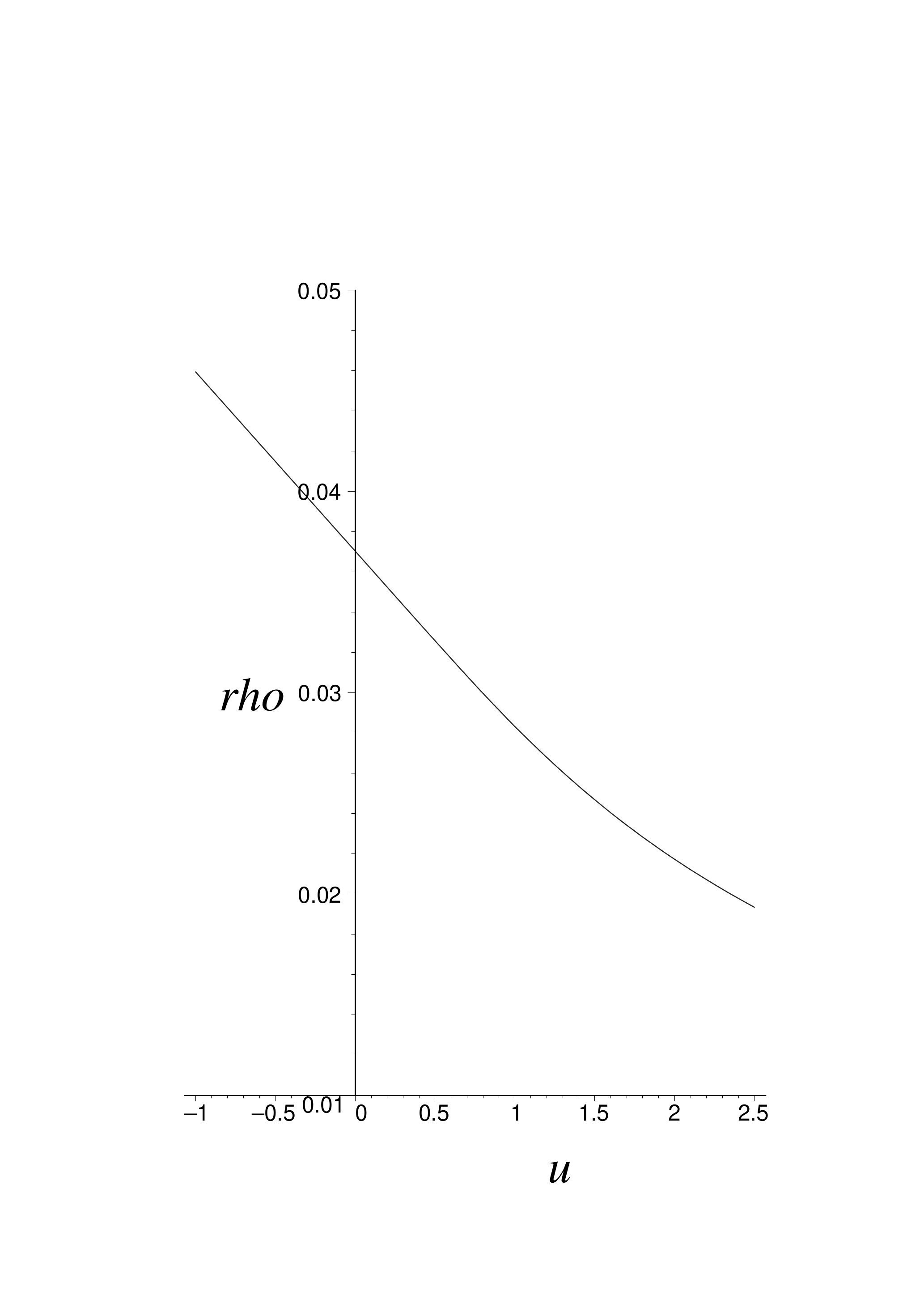}
\end{center}
\vskip -10mm
\caption{The radius $\rho_u$ of $F(z,u)$, as a function of $u\ge -1$.}
\label{fig:rhou}
\end{figure}

The proof of  the theorem uses the \emm singularity analysis, of~\cite[Ch.~VI]{flajolet-sedgewick}. We
thus need to locate the \emm dominant, singularities of the series $F'$
(that is, those of minimal modulus), and to find how $F'$ behave in their
vicinity. In order to do this, we begin with the series $R$, defined
 by $R=z+u\Phi(R)$, and then  move to $F'=\theta(R)$.  We
will find that both series have the same radius $\rho_u$. Moreover,
since $F'$ and $\bu(R-z)$ have non-negative coefficients in $z$,
this radius is a singularity of each (by Pringsheim's theorem). We
will prove that neither $F'$ nor $R$ have other dominant
singularities, and obtain  estimates of these functions near $\rho_u$
(the same estimate, up to a multiplicative factor).

Now the location of $\rho_u$, and its nature as a singularity, depend on
whether $u>0$ or $u<0$ (Figure~\ref{fig:R4valent}).
For $u>0$, the series $R$ will be shown to satisfy the \emm smooth
implicit schema, of~\cite[Sec.~VII.4]{flajolet-sedgewick}. In brief,
the dominant singularity $\rho_u$ of
$R$ comes from the failure of the  assumption $u\Phi'(R(z))\not =1$ in the implicit
function theorem. The value $R(\rho_u)$ lies in the analyticity
domain  of $\Phi$ and $\theta$, and the singularities of these series
play no role. Both $R$ and $F'$ will 
be proved to have a square root dominant singularity. If $u<0$ however, the
series $R$ reaches at $\rho_u$ the dominant singularity of $\Phi$ and
$\theta$, and the singular behaviours of $R$ and $F'$ at $\rho_u$
depend on the singular behaviours  of $\Phi$ and $\theta$. In particular, we find that, around
$\rho\equiv \rho_u$, the function $F''(z,u)$ behaves like
$1/\ln(1-z/\rho)$, up to a multiplicative constant. 
 Since this cannot be the singular behaviour of a D-finite series~\cite[p.~520
  and~582]{flajolet-sedgewick}, we have the following corollary.
\begin{cor} 
\label{cor-nonDF}
For $u\in[-1,0)$, the generating function $F(z,u)$ of $4$-valent forested
  maps is not D-finite. The same holds when $u$ is an
  indeterminate. 
\end{cor}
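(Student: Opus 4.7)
The plan is to deduce the corollary directly from the asymptotic provided by Theorem~\ref{tetra} and the classical characterization of coefficient asymptotics of D-finite series. The guiding principle (see Flajolet--Sedgewick, especially Chapters~VI--VII, and the references on pp.~520 and~582) is that if $A(z)\in\C[[z]]$ is D-finite, then at any finite singularity $\rho$ it admits a singular expansion built from terms of the form $(1-z/\rho)^{\alpha}(\ln(1-z/\rho))^{k}$ with $\alpha\in\qs$ and $k$ a \emph{non-negative} integer. By the standard transfer theorems of singularity analysis, the $n$th coefficient of such a series then admits an asymptotic expansion whose individual terms are of the form $\rho^{-n}n^{\beta}(\ln n)^{k}$ with $\beta\in\qs$ and $k\in\{0,1,2,\ldots\}$.

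The first step is to fix $u\in[-1,0)$ and invoke Theorem~\ref{tetra}, which gives $f_n(u)\sim c_u\,\rho_u^{-n}\,n^{-3}(\ln n)^{-2}$ with $c_u\neq 0$. A leading factor $(\ln n)^{-2}$ carries a \emph{negative} power of the logarithm, which is forbidden by the classification above. Consequently $F(z,u)$ cannot be D-finite in $z$. There is no real obstacle in this step beyond retrieving the appropriate reference, since the unusual singular behaviour recorded in Theorem~\ref{tetra} directly clashes with the structure imposed by D-finiteness.

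For the indeterminate case, I would argue by specialization. Suppose that $F(z,u)$, viewed as an element of $\qs(u)[[z]]$, satisfies a non-trivial linear ODE in $z$ with coefficients in $\qs(u)[z]$. Clearing denominators yields such an ODE with coefficients in $\qs[u,z]$, whose leading coefficient is a nonzero polynomial $P(u,z)$. For all but finitely many $u_0\in[-1,0)\cap\qs$, the polynomial $P(u_0,z)$ remains nonzero, so the specialized ODE is non-trivial and forces $F(z,u_0)$ to be D-finite in $z$ over $\qs$. Picking any such $u_0$ in $[-1,0)$ contradicts the first part of the corollary and completes the proof.
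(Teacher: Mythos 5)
Your argument is correct and follows essentially the same route as the paper: both appeal to the same structure theory of D-finite functions (same Flajolet--Sedgewick reference), the paper reading off the $1/\ln(1-z/\rho)$ singular behaviour of $F''$ from Proposition~\ref{prop:4v-neg} while you use the transferred coefficient estimate $f_n(u)\sim c_u\,\rho_u^{-n}n^{-3}(\ln n)^{-2}$ of Theorem~\ref{tetra}, which are two faces of the same fact. Your specialization argument for the indeterminate case is also correct and fills in a detail the paper only asserts. One minor caveat: D-finite functions can have irregular singularities with exponential local behaviour, not only terms of the form $(1-z/\rho)^{\alpha}(\ln(1-z/\rho))^{k}$, so your description of the possible local expansions is slightly too narrow; however, the key consequence you actually use --- that only non-negative integer powers of $\ln n$ can appear in D-finite coefficient asymptotics --- remains valid, and the proof goes through.
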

Recall that $F(z,u)$ is, however, differentially algebraic (Theorem~\ref{Dalg}).

\begin{figure}[h!]
\begin{center}
 \includegraphics[scale=0.3]{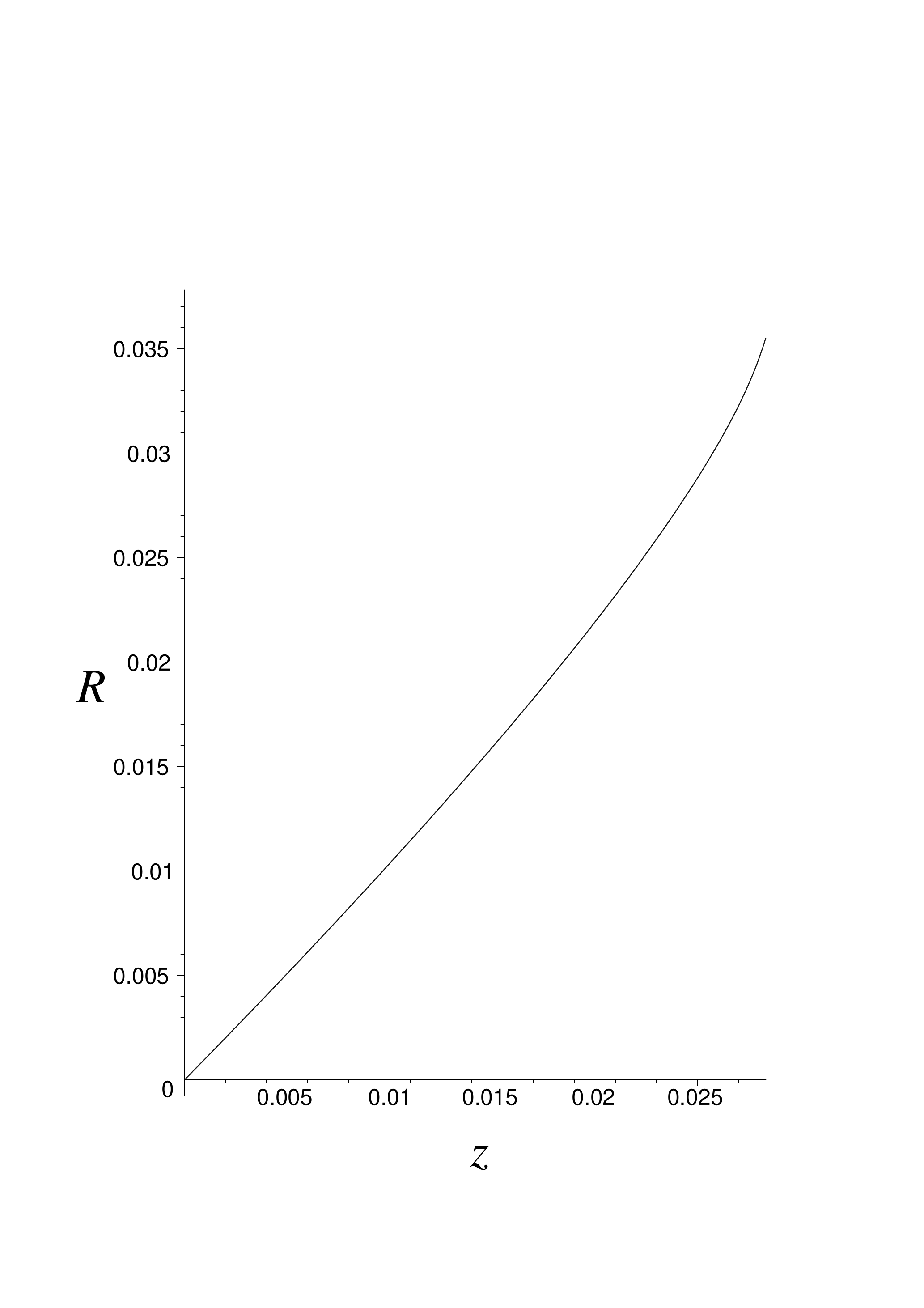}
\hskip 5mm
 \includegraphics[scale=0.3]{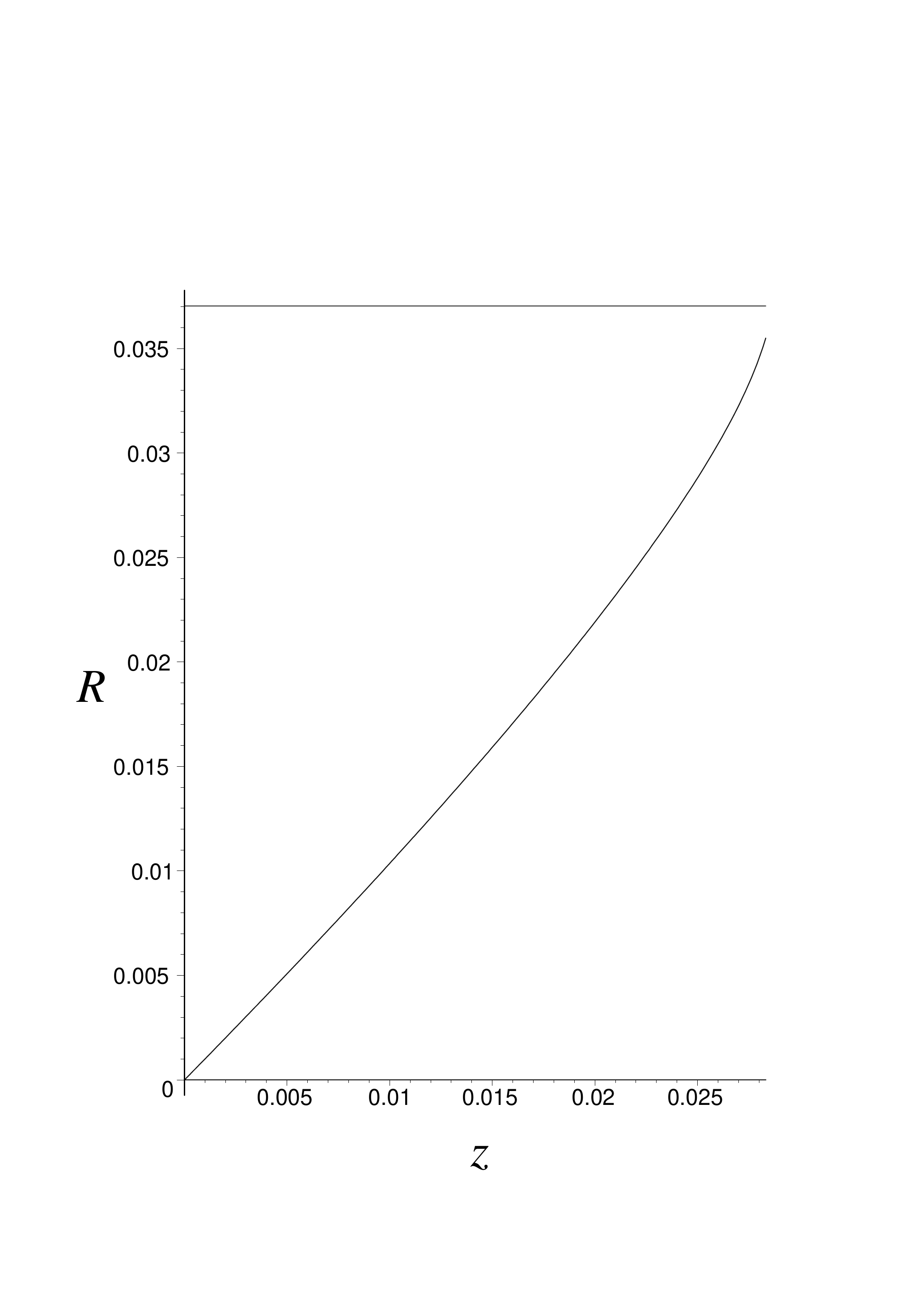}
\end{center}
\vskip -10mm
\caption{Plot of $R(z,u)$, for $z\in[0, \rho_u]$. \emph{Left:} when
  $u=1$, and more generally $u>0$, $R$ does not reach the dominant singularity of $\Phi$ (which
  is $1/27\simeq 0.037$). \emph{Right:} When $u=-1/2$, and more generally when $u
  \in [-1, 0]$, we have $R(\rho_u)=1/27$.} 
\label{fig:R4valent}
\end{figure}

\subsection{{The series $\Phi$ and $\theta$}}
\label{ssec:prelim}
Recall the definition~\eqref{theta-phi-4V} of these series. The $i$th
coefficient of $\theta$  is asymptotic to  $27^i/i^2$, up to a
multiplicative constant, and the same holds for $\Phi$. Hence both
series have  radius of convergence $ 1/ {27}$, converge at this point,
but their derivatives diverge.

This is as much information as we need to obtain the asymptotic
behaviour of $f_n(u)$ when $u>0$. When $u<0$, we will
need to know singular expansions of $\Phi$ and $\theta$ near $1/27$.
Let us first  observe that 
\beq\label{F-hg}
 \Phi(x) = x \left(  _2F_1\left(\frac 1 3,\frac 2 3;2;27 x\right) - 1 \right) 
\eeq
  where $_2F_1(a,b;c;x)$ denotes the standard hypergeometric function
  with parameters $a$, $b$ and $c$:  
$$
  _2F_1(a,b;c;x) = \sum_{n\geq0} \frac{(a)_n(b)_n} {(c)_n} \frac{x^n}{n!}, 
$$
with $(a)_n$ the rising factorial $a(a+1)\cdots(a+n-1)$. The series
$_2F_1\left(\frac 1 3,\frac 2 3;2;27 x\right)$ 
can be analytically continued in $\cs\setminus [1/27, +\infty)$, and
its behaviour as $z$ approaches $1/27$ in this domain is given
by~\cite[Eq.~(15.3.11)]{AS}. Translated it terms of $\Phi$, this
gives, as $\varepsilon \rightarrow 0$, 
\beq\label{sing-phi}
 \Phi\left(\frac{1 }{27}- \varepsilon\right) =
 \frac{\sqrt{3}}{12\pi} - \frac{1}{27} +
 \frac{\sqrt{3}}{2\pi}\,\varepsilon\ln{\varepsilon} + 
\left( 1 - \frac{\sqrt 3}{2 \pi}\right) \, \varepsilon + O(\varepsilon^2
 \ln{\varepsilon}).  
\eeq   
One also has:
\beq\label{phi-prime}
   \Phi'\left(\frac{1 }{27}- \varepsilon\right)=
 - \frac{\sqrt{3}}{2\pi}\,\ln{\varepsilon} 
-1+ O(\varepsilon \ln{\varepsilon}).
\eeq
The series  $\theta$  is related to $\Phi$ by~\eqref{thetrat}. It has the
same analyticity domain as $\Phi$, with local expansion at $1/27$:
\beq\label{sing-theta}
 \theta\left(\frac{1 }{27}- \varepsilon\right) = 
\frac 2 3 - \frac{7 \sqrt{3}}{6 \pi} 
+ \frac{2\sqrt{3}}{ \pi}\varepsilon \ln{\varepsilon} 
+ \frac{7\sqrt{3}}{ \pi}\varepsilon + O(\varepsilon^2 \ln{\varepsilon}). 
\eeq
Also,
\beq
\label{theta-prime}
\theta'\left(\frac{1 }{27}- \varepsilon\right)=
 -\frac{2\sqrt{3}}{ \pi}\ln{\varepsilon} 
- \frac{9\sqrt{3}}{ \pi}  + O(\varepsilon \ln{\varepsilon}). 
\eeq

\subsection{When $u>0$}
\label{ssec:pos&tetr}
As in~\cite[Def.~VI.1, p.~389]{flajolet-sedgewick}, we call
\emm $\Delta$-domain of radius $\rho$, any domain of the form
$$
\{z :|z|<r, z\not = \rho \hbox{ and } |\Arg (z-\rho)|> \phi\}
$$
for some $r>\rho$ and $\phi \in (0, \pi/2)$.
\begin{prop}\label{prop:4v-pos}
  Assume $u>0$. Then the series $R(z,u)$ is aperiodic and satisfies the
  smooth implicit   schema of~\cite[Def.~VII.4,
    p.~467]{flajolet-sedgewick}. Its radius is given
  by~\eqref{rho}, and satisfies~\eqref{rho-0}. The series $R$ is analytic in a 
$\Delta$-domain of radius $\rho\equiv \rho_u$, with a square root
singularity at $\rho$:
\beq\label{R-sing-pos}
R(z,u)= \tau -\gamma \sqrt{1-z/\rho } +O(1-z/\rho),
\eeq
where $\tau$ is defined as in Theorem~\ref{tetra}, and
$\gamma=\sqrt{\frac{2 \rho}{u\Phi''(\tau)}}$ with $\Phi$  given by~\eqref{theta-phi-4V}.

 The series $F'(z,u)$ is also analytic in a
$\Delta$-domain of radius $\rho$, with a square root
singularity at $\rho$:
\beq\label{Fprime-sing-pos}
F'(z,u)= \theta(\tau) -\gamma \theta'(\tau)\sqrt{1-z/\rho } +O(1-z/\rho),
\eeq
where $\gamma$ is given above  and $\theta$  is
defined by~\eqref{theta-phi-4V}. Consequently, the 
$n$th coefficient of $F$ satisfies, as $n\rightarrow \infty$,
$$
f_n(u) \sim  \theta'(\tau)  \sqrt{\frac{\rho^3}{2 \pi u \Phi''(\tau)}}
\rho^{-n} n^{-5/2}.
$$
\end{prop}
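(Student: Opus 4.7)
The plan is to analyze the implicit equation $R = z + u\Phi(R)$ by rewriting it as $\Omega(R(z)) = z$ with $\Omega(y) := y - u\Phi(y)$, and to invoke Corollary~\ref{lemmeasympt}. The relevant singularity of $\Omega$ on the positive real axis is $\omega = 1/27$, inherited from $\Phi$ via~\eqref{F-hg}. The companion quantity is $\tau = \min\{y \in [0,1/27) : \Omega'(y)=0\}$, i.e.\ the smallest positive solution of $u\Phi'(\tau)=1$. Since $\Phi$ has positive coefficients starting at $x^2$, the derivative $\Phi'$ is continuous, strictly increasing on $[0,1/27)$, vanishes at $0$, and diverges at $1/27^-$ by~\eqref{phi-prime}; hence for every $u>0$ there is a unique such $\tau \in (0,1/27)$. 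Corollary~\ref{lemmeasympt} then yields $\rho_u = \Omega(\tau) = \tau - u\Phi(\tau)$, together with analyticity of $R$ on $[0,\rho_u)$.

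Aperiodicity of $R$ follows from the fact that $\Phi$ contains both $x^2$ and $x^3$ (by~\eqref{theta-phi-4V}), so $R$ has non-zero coefficients at two consecutive positive indices. Because $\tau$ lies strictly inside the analyticity disc of $\Phi$, both $\Phi''(\tau)$ and $\Omega''(\tau) = -u\Phi''(\tau)$ are finite, and $\Omega''(\tau) < 0$. The hypotheses of the smooth implicit-function schema~\cite[Thm.~VII.3]{flajolet-sedgewick} are therefore satisfied, so $R$ admits an analytic continuation to a $\Delta$-domain of radius $\rho$, and locally inverting
\begin{equation*}
z - \rho = \tfrac{1}{2}\Omega''(\tau)(R-\tau)^2 + O\bigl((R-\tau)^3\bigr)
\end{equation*}
(choosing the branch making $R$ real-increasing on $(0,\rho)$) gives~\eqref{R-sing-pos} with $\gamma = \sqrt{2\rho/(u\Phi''(\tau))}$.

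For the exponentially small correction~\eqref{rho-0}, I would set $\tau = 1/27 - \varepsilon$ with $\varepsilon\to 0^+$ as $u\to 0^+$, and substitute~\eqref{phi-prime} into $u\Phi'(\tau)=1$ to obtain
\begin{equation*}
1 = -u\tfrac{\sqrt 3}{2\pi}\ln\varepsilon - u + O(u\varepsilon\ln\varepsilon),
\end{equation*}
which bootstraps to $\ln\varepsilon = -(1+u)\cdot\tfrac{2\pi}{u\sqrt 3} + o(1)$; hence $\varepsilon = O(\exp(-2\pi/(u\sqrt 3)))$. Plugging $\tau = 1/27 - \varepsilon$ into $\rho_u = \tau - u\Phi(\tau)$ and using~\eqref{sing-phi}, the $\varepsilon\ln\varepsilon$ terms cancel after substituting $u\Phi'(\tau)=1$, and only an $O(\varepsilon)$ discrepancy remains with the affine expression $\frac{1+u}{27}-u\frac{\sqrt 3}{12\pi}$ of~\eqref{rho-affine}; this proves~\eqref{rho-0}. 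Real-analyticity of $u\mapsto\rho_u$ on $(0,+\infty)$ follows from the analytic implicit function theorem applied to $u\Phi'(\tau)=1$.

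Finally, to transfer this to $F'$ and to $f_n$: since $R(\rho)=\tau<1/27$ and $R$ is continuous, we may shrink the $\Delta$-domain so that its image under $R$ stays strictly inside the analyticity disc of $\theta$. The composition $F'=\theta(R)$ is then analytic on that $\Delta$-domain, and substituting~\eqref{R-sing-pos} into the Taylor expansion $\theta(R)=\theta(\tau)+\theta'(\tau)(R-\tau)+O((R-\tau)^2)$ gives~\eqref{Fprime-sing-pos}. Applying the standard transfer theorem~\cite[Thm.~VI.4]{flajolet-sedgewick} to the singular term $-\gamma\theta'(\tau)\sqrt{1-z/\rho}$ yields $[z^{n-1}]F' \sim \frac{\gamma\theta'(\tau)}{2\sqrt\pi}\,\rho^{-(n-1)}(n-1)^{-3/2}$, and the identity $f_n=[z^{n-1}]F'/n$ produces the asymptotic constant $\theta'(\tau)\sqrt{\rho^3/(2\pi u\Phi''(\tau))}$. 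The main delicate step is the bootstrapping in the third paragraph, which must carefully exploit the logarithmic singularity of $\Phi'$ at $1/27$ to extract the exponential decay in $1/u$.
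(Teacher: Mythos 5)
Your proof follows essentially the same route as the paper: identify $\tau$ from $u\Phi'(\tau)=1$, invoke the smooth implicit-function schema for the square-root singularity of $R$, compose with $\theta$ for $F'$, transfer, and bootstrap the logarithmic divergence of $\Phi'$ at $1/27$ for the exponentially small correction~\eqref{rho-0}. The only difference is cosmetic: you first invoke Corollary~\ref{lemmeasympt} to locate $\rho_u$ and then the smooth implicit schema, whereas the paper uses the latter alone (which already supplies both the location and the square-root nature of the singularity), so the first step is redundant though harmless.
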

\noindent This proposition establishes the case  $u>0$ of Theorem~\ref{tetra}.
\begin{proof}
  The results that deal with $R$ are  a straightforward application of Definition~VII.4 and
  Theorem~VII.3 of~\cite[p.~467-468]{flajolet-sedgewick}. Using the
  notation of this book, $G(z,w)=z+u\Phi(w)$ is analytic for $(z,w)\in
  \cs\times \{|w|<1/27\}$. The so-called \emm characteristic system, holds
  at $(\rho,\tau)$ where $\tau$ is the unique element of $(0,1/27)$
  such that $G_w(\rho,\tau)=u\Phi'(\tau)=1$, and
  $\rho:=\tau-u\Phi(\tau)$. The existence and uniqueness of $\tau$ is guaranteed by the fact
  that $\Phi'(w)$ increases (strictly) from 0 to $+\infty$ as $w$ goes from 0
  to $1/27$.
The aperiodicity of $R$ is obvious from the first terms of its
expansion:
$
R=z+3z^2u+6u(3u+5)z^3+O(z^4).
$

\smallskip
We now move to $F'=\theta(R)$.
  Since $R(\rho,u)=\tau<1/27$, and $R$ has non-negative coefficients,
  there exists a $\Delta$-domain of radius $\rho$ in which  $R$ is
  analytic and strictly bounded (in modulus) by $1/27$. Since $\theta$
  has radius $1/27$, the series $F'=\theta(R)$  is also analytic in
  this domain, and its singular behaviour around $\rho$ follows from
a  Taylor expansion. One then applies the Transfer Theorem~VI.4
from~\cite[p.~393]{flajolet-sedgewick} to obtain the behaviour of the
$n$th coefficient of $F'$, which is
$(n+1)f_{n+1}(u)$. The estimate of $f_n(u)$ follows.

It remains to find an estimate of $\rho_u$ as $u\rightarrow
0^+$. Recall that $u\Phi'(\tau)=1$. Thus $\tau\equiv \tau_u$
approaches $1/27$ as $u\rightarrow 0$, and~\eqref{phi-prime} gives
$$
\ln (1/27-\tau)= -\frac{2\pi(1+\bu)}{\sqrt 3}+o(1)
$$
with $\bu=1/u$, so that
\beq\label{tau-0}
\tau -\frac 1 {27} \sim - \exp\left( -\frac{2\pi(1+\bu)}{\sqrt
  3}\right).
\eeq
Since $\rho=\tau -u \Phi(\tau)$, this gives~\eqref{rho-0} in view of
the expansion~\eqref{sing-phi} of $\Phi$.
\end{proof}

\subsection{When  $u<0$}
\begin{prop}\label{prop:4v-neg}
   Let $u \in [-1,0)$. The series $R$ and $F'$ have radius $\rho\equiv
     \rho_u$ given by~\eqref{rho-affine}. They are analytic in a
     $\Delta$-domain of radius $\rho$, and the following estimates
     hold in this domain, as $z\rightarrow \rho$:
     \begin{eqnarray}
      R(z) - \frac 1 {27} &\sim& -
\frac{2\pi\rho }{\sqrt 3 u}\, \frac{1-z/\rho}{\ln (1-z/\rho)},
  \label{Rsing-neg}
 \\
F''(z)+4\bu &\sim&   
\frac{72 \sqrt 3 \pi \bu^2 \rho}{\ln (1-z/\rho)}.
\label{F-sing-neg}
    \end{eqnarray}
Consequently, the $n$th coefficient of $F$ satisfies, as
$n\rightarrow \infty$,
$$
f_n(u)\sim  72 \sqrt 3 \pi \bu^2 
\frac{\rho^{-n+3}}{n^3 \ln ^2 n}.
$$
\end{prop}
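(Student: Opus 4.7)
\medskip

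\noindent\textbf{Proof proposal for Proposition~\ref{prop:4v-neg}.}
The plan is to rewrite $R=z+u\Phi(R)$ as an inversion equation $\Omega(R)=z$ with $\Omega(y):=y-u\Phi(y)$, apply Corollary~\ref{lemmeasympt} to locate the radius of $R$, invoke the Log-Inversion Theorem (Theorem~\ref{lninversion}) to obtain the singular behaviour of $R$ at its dominant singularity, and finally transfer singularities through $F'=\theta(R)$ to get the estimate of $f_n(u)$.

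First I would set up the inversion. The series $\Omega$ is analytic on $|y|<1/27$ with $\Omega(0)=0$ and $\Omega'(0)=1>0$. Since $u<0$ and $\Phi'$ is positive and increasing on $[0,1/27)$, we have $\Omega'(y)=1-u\Phi'(y)>1$ on $[0,1/27)$, so the set appearing in Corollary~\ref{lemmeasympt} is empty and $\tau=\omega=1/27$. This immediately gives that the radius of $R$ equals $\rho=\Omega(1/27)=1/27-u\Phi(1/27)$, which, using $\Phi(1/27)=\frac{\sqrt 3}{12\pi}-\frac 1{27}$, reduces to~\eqref{rho-affine}. Moreover $R$ extends to a real-analytic strictly increasing function on $[0,\rho)$ with $R(\rho^-)=1/27$.

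Next, the crucial ingredient: to obtain the singular expansion of $R$ near $\rho$, I would apply Theorem~\ref{lninversion} to $\Psi(\eta):=\rho-\Omega(1/27-\eta)$. Using the expansion~\eqref{sing-phi} of $\Phi$, a short calculation gives
\[
\Psi(\eta)=\tfrac{u\sqrt 3}{2\pi}\,\eta\ln\eta+\bigl(1+u(1-\tfrac{\sqrt 3}{2\pi})\bigr)\eta+O(\eta^2\ln\eta),
\]
so $\Psi(\eta)\sim -c\,\eta\ln\eta$ with $c=-u\sqrt 3/(2\pi)>0$, and $\Psi$ is analytic on a slit neighbourhood of $0$. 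The Log-Inversion Theorem then provides an inverse $\Upsilon$ analytic in a domain $D_{\rho',\alpha}$ (any $\alpha<\pi$) with $\Upsilon(y)\sim -y/(c\ln y)$. Setting $1/27-R(z)=\Upsilon(\rho-z)$ yields the claimed analyticity of $R$ in a $\Delta$-domain of radius $\rho$ (by the relation between the sector $D_{\rho',\alpha}$ and a $\Delta$-domain obtained via $z\mapsto\rho-z$) and the estimate~\eqref{Rsing-neg}, since $\ln(\rho-z)\sim\ln(1-z/\rho)$ as $z\to\rho$.

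The main work then is to derive~\eqref{F-sing-neg}. Writing $F''=\theta'(R)\,R'$ and $R'=1/(1-u\Phi'(R))$, I would plug in the expansions~\eqref{phi-prime} and~\eqref{theta-prime} with $\varepsilon:=1/27-R$ and $L:=\ln\varepsilon$:
\[
\theta'(R)=-\tfrac{2\sqrt 3}{\pi}L-\tfrac{9\sqrt 3}{\pi}+o(1),
\qquad 1-u\Phi'(R)=\tfrac{u\sqrt 3}{2\pi}L+(1+u)+o(1).
\]
The ratio gives a leading constant $-4/u$ that must cancel with $4\bar u$, so the heart of the computation is carefully extracting the $O(1/L)$ correction: expanding $\theta'(R)/(1-u\Phi'(R))$ in powers of $1/L$ produces
\[
F''(z)+4\bar u\sim \tfrac 1L\Bigl(-\tfrac{18}{u}+\tfrac{8\sqrt 3\pi(1+u)}{3u^2}\Bigr),
\]
and a direct check using the explicit form~\eqref{rho-affine} of $\rho$ identifies the bracket with $72\sqrt 3\pi\bar u^2\rho$. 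Since $L=\ln\varepsilon\sim\ln(1-z/\rho)$, this yields~\eqref{F-sing-neg} in the $\Delta$-domain inherited from the previous step. To transfer to coefficients, I would differentiate once more: $F'''(z)\sim\frac{C/\rho}{(1-z/\rho)\ln^2(1-z/\rho)}$ with $C=72\sqrt 3\pi\bar u^2\rho$, a singularity type covered by the standard transfer theorem of~\cite[Thm.~VI.2]{flajolet-sedgewick}, which gives $[z^n]F'''\sim C\rho^{-n-1}/\ln^2 n$. Since $[z^n]F'''=(n+1)(n+2)(n+3)f_{n+3}$, the asymptotic for $f_n(u)$ claimed in the proposition follows. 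The main obstacle is the exact cancellation in the leading order of $F''$, which forces a genuinely second-order singular expansion; the algebraic identity between the subleading coefficient and $72\sqrt 3\pi\bar u^2\rho$ is what makes the proposition's explicit constant come out so cleanly.
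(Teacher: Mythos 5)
Your plan is the same as the paper's: apply Corollary~\ref{lemmeasympt} to locate $\rho_u$, apply the Log-Inversion Theorem to an auxiliary $\Psi$ built from $\Omega(y)=y-u\Phi(y)$ to get the singular behaviour of $R$ near $\rho$, then transfer through $F''=\theta'(R)/(1-u\Phi'(R))$. Your local computations are correct: the value of $c$ (yours differs from the paper's by a factor $27$ because you scale the argument by $1/27$), the identity $-18/u+8\sqrt3\pi(1+u)/(3u^2)=72\sqrt3\pi\bu^2\rho$, and the derivation of~\eqref{Rsing-neg} all check out. Differentiating to $F'''$ rather than transferring $F''$ directly is a harmless variant.

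However, there is a genuine gap in the step where you claim $\Delta$-analyticity. The Log-Inversion Theorem gives you analyticity of $\Upsilon$ only on a small sector $D_{\rho',\alpha}$; pulling back by $z\mapsto\rho-z$ therefore yields analyticity of $R$ only in a small angular neighbourhood of $\rho$, not in a $\Delta$-domain. A $\Delta$-domain of radius $\rho$ requires analyticity on $\{|z|<r,\ z\neq\rho,\ |\Arg(z-\rho)|>\phi\}$ for some $r>\rho$ — in particular, $R$ must be shown to be analytic at every point $\mu$ of modulus $\rho$ with $\mu\neq\rho$. That is precisely what the singularity analysis (and the transfer theorem) needs, and it does not follow from the Log-Inversion Theorem alone. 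The paper devotes a separate lemma to this: using $(u+1)$-positivity of $\Rc=\bu(R-z)$, it shows that $R(z)\notin[1/27,+\infty)$ whenever $|z|\le\rho$ and $z\neq\rho$, and that $\Rc'$ stays bounded on the closed disk (via $\Rc'(\rho)=-\bu<\infty$), which together rule out $u\Phi'(R(\mu))=1$ and allow the analytic implicit function theorem to be applied at $\mu$. Without some version of this argument your proof establishes the singular \emph{expansion} at $\rho$ but not the absence of other dominant singularities, so the transfer to $f_n(u)$ is not justified.
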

\noindent Since~\eqref{F-sing-neg}  cannot be the singular behaviour of a
D-finite series~\cite[p.~520  and~582]{flajolet-sedgewick}, this
proves Corollary~\ref{cor-nonDF}. This proposition also establishes the
case $u<0$ of Theorem~\ref{tetra}.

\begin{proof}
We begin as before with the series $R$. The equation $R=z+u\Phi(R)$ reads
$\Omega(R)=z$ with $\Omega(y)=y-u\Phi(y)$. Clearly $\Omega(0)=0$ and
$\Omega'(0)=1 >0$, so that we can  apply
Corollary~\ref{lemmeasympt}, in which the role of $Y$ is  played by
$R$.  Let $\omega$, $\tau$ and $\rho$ be defined as in this corollary. 
It follows from Section~\ref{ssec:prelim} that $\omega=1/27$. Since $u<0$, $\Omega'(y)=1-u  
\Phi'(y)$ does not vanish on $[0,   1/27)$. Hence $\tau=1/27$ as well.
   By Property (5) of Corollary~\ref{lemmeasympt},
$$
\rho = \Omega\left(\frac 1{27}\right)=\frac 1 {27} - u \Phi\left(\frac 1 {27}\right),
$$
which, combined with~\eqref{sing-phi}, gives~\eqref{rho-affine}.

 Corollary~\ref{lemmeasympt} tells us that  $R$ has an analytic continuation along
$[0, \rho)$.
Moreover, $R(z)$ increases from $0$ to
  $1/27$  on $[0, \rho)$, and  the equation 
\beq\label{eqfuncR}
R=z+u \Phi(R)
\eeq
holds in the whole interval   $[0, \rho)$.

By Corollary~\ref{positiv}, the series $\bu(R-z)$ has non-negative
coefficients. As $R$ itself, it is analytic along $[0,\rho)$. By
  Pringsheim's theorem, its radius is at least $\rho$, and this holds
  for $R$ as well. We will now study the behaviour of $R$  in the neighbourhood
  of $\rho$, 
and prove  that it is singular at this point, so that
$\rho$ is indeed the radius of $R$.

For $z \in \C\setminus\R^-$,  let us define 
$$
\Psi(z) := \rho + \frac {z-1} {27} + u\,\Phi\left(\frac{1 -
  z}{27}\right).
$$ 
As explained above, $ 1-27\,R(\rho-y)$ increases from $0$
to $1$ as $y$ goes from $0$ to $\rho$, and the functional equation~\eqref{eqfuncR}
satisfied by $R$ reads, for $y  \in [0, \rho)$,
 \beq\label{eq-R-minus}
\Psi(1-27\,R(\rho-y)) = y.
\eeq
By~\eqref{sing-phi}, we have $\Psi(z) \sim  - c z \ln z$ where 
$$c=-\frac{\sqrt 3 u}{54 \pi}>0.$$ 
Let us apply the log-inversion theorem (Theorem~\ref{lninversion}) to $\Psi$,
with $\alpha=3\pi/4$ (we now denote $r$ and $r'$ the numbers $\rho$
and $\rho'$ of Theorem~\ref{lninversion}): There
exists $r>0$ and $r'>0$, and a function $\Upsilon$ analytic on $D_{r',
  \alpha}=\left\{|z| < r' \textrm{ and
} |\Arg z | <  3  \pi/4 \right\}$, such that $\Psi (\Upsilon(y)) =
y$. Furthermore,  $\Upsilon(y)$ is the only preimage of $y$ under $\Psi$
that can be found in $D_{r, \pi}=\left\{|z| < r \textrm{ and
} |\Arg z| <    \pi \right\} $. Comparing with~\eqref{eq-R-minus}
shows that  for $y$ small enough and positive, one has 
 $\Upsilon(y) = 1-27\,R(\rho-y)$. Returning
to the original variables, this means that, for $z$ real and close to $\rho^-$,
$$
R(z)=\frac 1 {27} \left( 1-\Upsilon(\rho-z)\right),
$$
 so that $R$ can be
analytically continued on $\left\{|z-\rho| < r \textrm{ and }
|\Arg(z-\rho)\ | > \pi/4\right\}$. Moreover, the final statement of
Theorem~\ref{lninversion} gives~\eqref{Rsing-neg}.
This shows that $R$ is singular at $\rho$, which is thus the
radius of $R$.

In order to prove  that $R$ is analytic in a $\Delta$-domain of
radius $\rho$, we now have to prove that it has no
singularity other than $\rho$ on its circle of convergence.
So let $\mu \neq \rho$ have modulus $ \rho$. Since $\Rc:=\bu(R-z)$ has positive coefficients and
$|\Rc(\rho)| < + \infty$, the series $\Rc$ converges at $\mu$, and so
does $R$.
Recall that $\Phi$ is analytic in $\C\setminus[1/27,
  +\infty)$. Hence~\eqref{eqfuncR}, which holds in a neighbourhood of
  $0$, will hold in the closed disk of center $\rho$ if we can prove 
the following lemma.

\begin{lem} For $|z|\le \rho$ and $z\not = \rho$, we have $R(z) \not
  \in [1/27, +\infty)$.
\end{lem}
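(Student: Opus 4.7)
The plan is to split $\bar D(0,\rho)\setminus\{\rho\}$ into its real and non-real parts and handle each using the $(u+1)$-positivity of $\Phi(R(z))=\bu(R(z)-z)=\sum_{n\ge 2}d_n z^n$ with $d_n\ge 0$ (Corollary~\ref{positiv}, with $u\in[-1,0)$ fixed and $\mu=u+1\in[0,1)$ substituted).

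For real $z=x\in[-\rho,\rho]\setminus\{\rho\}$, the conclusion is immediate. If $x\in[0,\rho)$, the strict monotonicity of $R$ on $[0,\rho)$ (from Corollary~\ref{lemmeasympt}(2), already applied) gives $R(x)<R(\rho)=1/27$. If $x=-r\in[-\rho,0)$, the triangle inequality applied to the non-negative series $\sum d_n z^n$ at $z=-r$ yields $|R(-r)+r|\le r-R(r)$, hence $R(-r)\le -R(r)\le 0<1/27$.

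For non-real $z=re^{i\theta}$ with $\theta\in(0,\pi)$ (the case $\theta\in(-\pi,0)$ follows by the symmetry $R(\bar z)=\overline{R(z)}$), I will assume $R(z)\in\R$ and derive a contradiction. Taking imaginary parts in $R(z)=z-|u|\sum_{n\ge 2}d_n z^n$ and dividing by $\sin\theta>0$ yields
\begin{equation*}
r = |u|\sum_{n\ge 2}d_n r^n\,U_{n-1}(\cos\theta),
\end{equation*}
where $U_{n-1}(\cos\theta)=\sin(n\theta)/\sin\theta$ is the Chebyshev polynomial of the second kind. The classical bound $|U_{n-1}(x)|<n$ on $(-1,1)$ for $n\ge 2$, combined with $d_n\ge 0$ and $d_2>0$, gives the strict estimate
\begin{equation*}
\sum_{n\ge 2}d_n r^n\,U_{n-1}(\cos\theta) < \sum_{n\ge 2}n\,d_n r^n = r\,(\Phi\circ R)'(r).
\end{equation*}
Differentiating $R=z+u\Phi(R)$ gives $(\Phi\circ R)'(r)=\Phi'(R(r))/(1+|u|\Phi'(R(r)))$, which is strictly less than $1/|u|$ for $r<\rho$ and tends to $1/|u|$ as $r\to\rho^-$ (using $\Phi'(1/27^-)=+\infty$ from~\eqref{phi-prime}); Abel's theorem for non-negative series then gives $\sum_{n\ge 2}n\,d_n\rho^{n-1}=1/|u|$, so $(\Phi\circ R)'(r)\le 1/|u|$ throughout $[0,\rho]$. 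Combining, $|u|\sum_{n\ge 2}d_n r^n U_{n-1}(\cos\theta) < r$ for every $r\in(0,\rho]$ and $\theta\in(0,\pi)$, contradicting the displayed equation.

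Hence $R(z)\notin\R$ for every non-real $z\in\bar D(0,\rho)$, and combined with the real case this proves the lemma. The main obstacle is the strict Chebyshev step: it relies on $|\cos\theta|<1$ (equivalently, $z$ non-real) combined with the sharp limit $(\Phi\circ R)'(\rho^-)=1/|u|$ coming from the logarithmic divergence of $\Phi'$ at $1/27$, which is precisely what makes the estimate tight at $z=\rho$ and nowhere else on $\bar D(0,\rho)$.
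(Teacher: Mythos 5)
Your proof is correct, and for the non-real case it takes a genuinely different route from the paper. The paper shows $\Ima R(z)\neq0$ by writing $|\Ima\Rc(z)|=|\Ima(\Rc(z)-\Rc(\Rea z))|$ and applying the (vectorial) mean value inequality over the vertical segment $[\Rea z, z]$, bounding $\sup_{|y|\le\rho}|\Rc'(y)|$ by $\Rc'(\rho)=-\bu$ via positivity of coefficients; the strictness comes from $\Rc'$ being non-constant on the segment. You instead take imaginary parts term by term in the power series, divide by $\sin\theta$, and use the sharp elementary bound $|U_{n-1}(\cos\theta)|=|\sin n\theta|/\sin\theta<n$ for $\theta\in(0,\pi)$, $n\ge2$; here the strictness comes from $|\cos\theta|<1$ together with $d_2>0$. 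Both arguments hinge on the same two facts --- the $(u+1)$-positivity of $\bu(R-z)=\Rc=\Phi\circ R$, and the convergence $\Rc'(\rho)=-\bu$ forced by the logarithmic blow-up of $\Phi'$ at $1/27$ --- but your Chebyshev route replaces the complex mean-value inequality with a coefficient-level estimate, which is arguably more elementary. You also simplify the negative real axis: the paper argues by contradiction via the intermediate value theorem and the non-vanishing of $R'=(1-u\Phi'(R))^{-1}$, whereas your direct triangle inequality $|R(-r)+r|\le r-R(r)$, giving $R(-r)\le -R(r)\le 0$, is shorter and avoids continuing the functional equation along $[-\rho,0]$. Both proofs are valid; yours is a clean alternative that stays closer to the power-series coefficients throughout.
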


\begin{proof} We have already seen that the property holds (since $R$
  is increasing) on the interval $[0, \rho)$. On the interval $[-\rho,
      0]$,  the function $R$ is real (Lemma~\ref{lem:real}) and
    continuous. Hence, if $R$ exits $(-\infty, 1/27)$ on this
    interval,  there exists $t \in [-\rho, 0]$ such that
    $R(t)=1/27$. Let $t$ be maximal for this property. Then $R(z)\in
    \C\setminus [1/27,  +\infty)$ on a complex  neighbourhood of
      $(t,0]$, and~\eqref{eqfuncR}  holds there. By differentiating
    it, we obtain
\beq\label{R-prime}
R'(z)= \frac 1 {1-u\Phi'(R(z))} \not = 0.
\eeq
In particular, $R'(0)=1$. But since $R(t)=1/27> R(0)=0$, the
function $R'(z)$ must vanish in $(t,0)$, which is impossible in view
of its expression above.

Assume now that $z$ is
    not real, and let us prove that $R(z)$ is not real either. First,
 \beq\label{tata}
|\Ima R(z)| 
= |\Ima (z+u\Rc(z))|
\geq |\Ima  z| +u\,|\Ima \Rc(z)|.
\eeq
Then:
\begin{multline}
\label{toto}
  |\Ima \Rc(z)|
=|\Ima \left(\Rc(z)-\Rc(\Rea z)\right)|
 \leq |\Rc(z) - \Rc(\Rea z)| 
\\
< |z - \Rea z|\max_{y \in [\Rea z,z]}|\Rc'(y)| \leq
|\Ima z|\max_{|y| \leq \rho}|\Rc'(y)|.
\end{multline}
The strict inequality comes from the fact that  $\Rc'$ is not constant
over $[\Rea z,z]$. But $\Rc'$ is a power series with positive
coefficients, and thus for $|y| \leq \rho$, 
\beq\label{Rc-prime}
|\Rc'(y)| \leq \Rc'(\rho) = \bu \left(R'(\rho) - 1\right) 
= \bu \left(\lim_{t \rightarrow \rho} \frac 1 {1 - u \Phi'(R(t))}-
1\right) = -\bu,
\eeq
because $\Phi'(z)$ tends to $+\infty$ as $z\rightarrow 1/27$.
Returning to~\eqref{toto} gives $ |\Ima \Rc(z)|
< -\bu |\Ima z|$, and this inequality, combined
with~\eqref{tata}, gives
$|\Ima R(z)| > 0$.
\end{proof}
So we now know that~\eqref{eqfuncR} holds everywhere in the disk of
radius $\rho$, with $R$ only reaching the critical value $1/27$ at
$\rho$. By differentiation, \eqref{R-prime} holds as well. Let us return to our point $\mu\not =\rho$,
of modulus $\rho$. We now want to apply the analytic implicit function
theorem to~\eqref{eqfuncR} at the point $(\mu, R(\mu))$. We know that
$\Phi$ is analytic around $R(\mu)$. Could it be that $u\Phi'(R(\mu))
=1$? By~\eqref{R-prime}, this would imply that $|R'(z)|$, and thus $|\Rc'(z)|$, is not
bounded as $z$ approaches $\mu$ in the disk. However, $\Rc'$ has
non-negative coefficients and $\Rc'(\rho)$ has been shown to converge
(see~\eqref{Rc-prime}). Thus $\Rc'(z)$ remains bounded in 
the disk of radius $\rho$, and in particular  $u\Phi'(R(\mu))
\not=1$.  The analytic implicit function theorem then implies that
$R$ is analytic at $\mu$.

In conclusion, we have proved that there exists a $\Delta$-domain of
radius $\rho$ where $R$ is analytic and avoids the half-line
$[1/27,+\infty)$.

\medskip
Let us now turn our attention to   $F'=\theta(R)$. Since
$\theta$ is analytic in  $\C\setminus[1/ {27},+\infty)$, the
  series  $F'$ is analytic in the same $\Delta$-domain as $R$.
The estimate~\eqref{Rsing-neg} of $R$, combined with  the
expansion~\eqref{sing-theta} of $\theta$, does not give immediately the singular behaviour of $F'$. Another route would be
possible, but it is more direct  to work with $F''$ instead. Indeed,
\beq\label{Fzz}
F''(z)= R'(z) \theta'(R(z)) = \frac{\theta'(R(z))}{1-u\Phi'(R(z))}.
\eeq
By~\eqref{phi-prime} and~\eqref{theta-prime},
\begin{eqnarray*}
  \frac{\theta'(1/27-\varepsilon)}{1-u\Phi'(1/27-\varepsilon)}&=& -4\bu -
{2}\bu\left(  {9} -\frac{4\pi(1+\bu)}{\sqrt 3}\right) \frac
1{\ln \varepsilon}+ O (1/{\ln^2 \varepsilon})\\
&=& -4\bu +  \frac{72 \sqrt 3 \pi \bu^2 \rho}
{\ln \varepsilon}+ O (1/{\ln^2 \varepsilon}), 
\end{eqnarray*}
in view of~\eqref{rho-affine}.  
This, combined with~\eqref{Fzz} and the
estimate~\eqref{Rsing-neg} of $R(z)$, gives~\eqref{F-sing-neg}.
One finally applies the Transfer Theorem~VI.4
from~\cite[p.~393]{flajolet-sedgewick} to obtain the behaviour of the
$n$th coefficient of $F''$, which is
$(n+2)(n+1)f_{n+2}(u)$. The estimate of $f_n(u)$ follows.
\end{proof}

\section{Large random  maps equipped with a forest or a tree}
\label{sec:random}
We still focus in this section on 4-valent maps, equipped either with
a spanning forest or with a spanning tree. In each case, we define a
Boltzmann probability distribution on maps of size $n$, which involves
a parameter $u$ and takes
into account  the number of components of the spanning forest, or the
number of internally active edges of the spanning tree (equivalently,
the level of a recurrent sandpile configuration, as explained in
Section~\ref{sec:tutte}). We observe on several random variables the
effect of the phase transition found at $u=0$ in the previous section.

\subsection{Forested maps: Number and size of components}
Fix $n \in \N$ and $u \in [0,+\infty)$. Consider the following
probability  distribution on 4-valent
forested maps $(M,F)$ having $n$ faces :
$$
\mathbb P_c(M,F) =  \frac {u^{c(F)-1}}{f_n(u)},
$$
 where $c(F)$ is the number of  components of $F$, and $f_n(u)$ counts
 4-valent forested maps by the number of non-root components. Under this
 distribution, let  $C_n$ be the number of  components of $F$, and $S_n$ 
 the size (number of vertices)  of the root component.
When $u=0$, only tree-rooted maps have a positive
probability, $C_n=1$ and $S_n=n-2$, the total number of vertices
in the map. Let us examine how this changes when $u>0$.
\begin{prop}
\label{random-forest}
Assume $u>0$. Under the distribution $\PP_c$, we have, as $n\rightarrow \infty$:
$$
\E_c(C_n) \sim \frac {u\Phi(\tau)}{\tau -u\Phi(\tau)}\, n,
$$
where 
$\Phi$ is given by~\eqref{theta-phi-4V} and $\tau\equiv \tau_u$  is the unique solution in
$\left(0,1/{27}\right)$ of $u\Phi'(y)=1$. 

The size $S_n$ of the root component
admits  a discrete limit law: for $k\ge 1$,
\beq
\label{PSn}
\lim_{n \rightarrow + \infty} \mathbb P_c(S_n=k) = 
\frac{4\,(3\,k)!\,}{(k-1)!\,k!\,(k+1)!}\frac{ \tau^k}{\theta'(\tau)}
\eeq
with $\theta$ defined by~\eqref{theta-phi-4V}.
\end{prop}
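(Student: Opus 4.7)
For the mean of $C_n$, I begin with the identity
\[
\E_c(C_n) \;=\; 1 + u\,\frac{f'_n(u)}{f_n(u)}, \qquad f'_n(u):=\partial_u f_n(u),
\]
which follows immediately because $C_n-1$ is the exponent of $u$ in the Boltzmann weight. The asymptotics of $f_n(u)$ are given by Proposition~\ref{prop:4v-pos}; for $f'_n(u)$ I first integrate the expansion~\eqref{Fprime-sing-pos} of $F'_z$ from $z$ to $\rho_u$, which exhibits the singular part of $F(z,u)$ as $K(u)(1-z/\rho_u)^{3/2}$ with $K(u)=\tfrac{2}{3}\gamma\,\theta'(\tau_u)\,\rho_u$. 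Differentiating this in $u$ produces a dominant $(1-z/\rho_u)^{1/2}$ term proportional to $\rho'_u$, because $\partial_u(1-z/\rho_u)^{3/2}$ contains the factor $\tfrac{3}{2}(1-z/\rho_u)^{1/2}\,z\rho'_u/\rho_u^2$ coming from the moving singularity. Applying the transfer theorem (VI.4 of~\cite{flajolet-sedgewick}) then yields $f'_n(u) \sim -(\rho'_u/\rho_u)\,n\,f_n(u)$.

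To conclude the first part, I differentiate $\rho_u = \tau_u - u\,\Phi(\tau_u)$ in $u$: the terms carrying $\tau'_u$ come multiplied by $1 - u\Phi'(\tau_u)$, which vanishes by the very definition of $\tau_u$, so that $\rho'_u = -\Phi(\tau)$. Substituting yields the announced limit $u\Phi(\tau)\,n/(\tau - u\Phi(\tau))$.

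For the law of $S_n$, I interpret the event $\{S_n = k\}$ through the bijection of Lemma~\ref{lem:cont}: the root component of the forest has exactly $k$ vertices if and only if the root vertex of the contracted map $M'$ has degree $2(k+1)$. Restricting the root weight $h_j$ in the Bouttier-Guitter identity~\eqref{relM} to be $t^c_{2(k+1)}$ at $j=2(k+1)$ and $0$ elsewhere therefore produces the restricted marked-face generating function $F'_{(k),z}(z,u) = \theta_{k+1}\,R^{k+1}$, where $\theta_{k+1} := [x^{k+1}]\theta(x) = \tfrac{4(3k)!}{(k-1)!(k+1)!^2}$. Since marking a face multiplies by $n$ both $f_n$ and its $\{S_n = k\}$-restricted analogue, the mark cancels in the ratio, and
\[
\mathbb{P}_c(S_n = k) \;=\; \frac{[z^{n-1}]\,\theta_{k+1}R^{k+1}}{[z^{n-1}]\,\theta(R)}.
\]
Inserting the square-root expansion~\eqref{R-sing-pos} gives $R^{k+1} = \tau^{k+1} - (k+1)\gamma\,\tau^k\sqrt{1-z/\rho_u} + O(1-z/\rho_u)$; the transfer theorem then yields the limit $(k+1)\theta_{k+1}\tau^k/\theta'(\tau)$, which after the elementary simplification $(k+1)\theta_{k+1} = \tfrac{4(3k)!}{(k-1)!k!(k+1)!}$ coincides with~\eqref{PSn}.

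The principal technical obstacle is justifying the differentiation in $u$ of the singular expansion of $F(z,u)$: one needs this expansion to be uniform as $u$ varies in a neighbourhood of the given $u_0>0$. This follows from a parameterised version of the smooth implicit schema: $\Phi'$ strictly increases from $0$ to $+\infty$ on $[0,1/27)$, so $\tau_u$, and therefore $\rho_u$, depends real-analytically on $u\in(0,\infty)$ via the implicit function theorem; the coefficients of the singular expansion of $R(z,u)$, and hence of $F(z,u)$, then inherit this analytic dependence, making term-by-term differentiation in $u$ legitimate. The remainder of the argument is a routine application of singularity analysis.
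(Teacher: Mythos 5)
Your proof is correct, and for $S_n$ it essentially mirrors the paper (introduce $x$ to mark vertices of the root component, obtain $xF'_z=\theta(xR)$, and apply singularity analysis to the ratio of coefficients). For $\E_c(C_n)$, however, you take a genuinely different route. The paper computes
$$
F''_{zu}(z,u)=\frac{\Phi(R)\theta'(R)}{1-u\Phi'(R)}
$$
directly from the defining system and applies singularity analysis to this explicit expression, using the known expansion of $R$. You instead integrate the singular expansion of $F'_z$ to get the $(1-z/\rho_u)^{3/2}$ singularity of $F$, differentiate that expansion with respect to $u$, and read off the dominant $(1-z/\rho_u)^{1/2}$ contribution produced by the moving singularity. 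Both arrive at $f'_n(u)\sim-(\rho'_u/\rho_u)\,n\,f_n(u)$ and then at the answer via $\rho'_u=-\Phi(\tau)$.

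What the paper's approach buys you is that it avoids differentiating an asymptotic expansion in a parameter: $F''_{zu}$ is obtained algebraically from the functional equation, and singularity analysis is applied to it directly. Your approach is shorter to state but rests on the claim that the singular expansion of $F(z,u)$ can be differentiated term by term in $u$, including the $O((1-z/\rho_u)^{2})$ error term. You correctly identify this as the technical obstacle and correctly observe that $\tau_u$, $\rho_u$, $\gamma_u$, and the other expansion coefficients depend analytically on $u$ via a parameterised smooth implicit schema; what remains implicit in your write-up is that the error term itself is jointly analytic in $(z,u)$ on a suitable domain around $(\rho_{u_0},u_0)$ so that its $u$-derivative stays $O((1-z/\rho_u))$ and does not pollute the $(1-z/\rho_u)^{1/2}$ contribution. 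This can be justified (the expansion of $R$ around its square-root singularity is a convergent Puiseux series with coefficients analytic in $u$), but since it is precisely the point where such arguments typically go wrong, it deserves to be made explicit rather than delegated to ``routine singularity analysis.'' If you prefer to sidestep the issue entirely, the paper's route of first computing $F''_{zu}$ from the system and only then doing singularity analysis is the cleaner path.
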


\begin{proof} 
We have 
\beq\label{ECn}
\E_c(C_n-1)= \sum_{(M,F)} \left(c(F)-1\right) \frac {u^{c(F)-1}}{f_n(u)}=u\frac{f'_n(u) }{f_n(u)}=
u\frac{[z^{n-1}]F''_{zu}(z,u) }
{[z^{n-1}]F'_{z}(z,u) }.
\eeq
It follows from the definition~\eqref{system-simple} of $R$ and $F$ that
\beq\label{Fzu}
F''_{zu}(z,u)= \frac{\Phi(R) \theta'(R)}{1-u\Phi'(R)}.
\eeq
We now  use singularity analysis. The functions $\Phi$ and $\theta$
are analytic at $\tau=R(\rho,u)$, the number $\tau$ satisfies
$1=u\Phi'(\tau)$, and a singular estimate  of $R-\tau$ is given
by~\eqref{R-sing-pos}. This gives, as $z\rightarrow \rho$,
$$
F''_{zu}(z,u)\sim \frac{\Phi(\tau) \theta'(\tau)}{u\Phi''(\tau) \gamma
  \sqrt{1-z/\rho}}
$$
where $\gamma$ is as in  Proposition~\ref{prop:4v-pos}. An estimate of
$F'_z(z,u)$  is given by~\eqref{Fprime-sing-pos}. Our estimate
of $\E_c(C_n)$ then follows from  a transfer theorem, and the fact
that $\rho=\tau-u\Phi(\tau)$.

\medskip
To study $S_n$, we add to our \gf\ $F(z,u)$ a weight $x$ for each
vertex belonging to the root component. Lemma~\ref{lem:cont} becomes
$$
 F (z,u,x)=  \bar M
(z,u;\,0,0,0,t_4,0, t_6, \ldots; 0, 0, 0, x\,t^c_4,0, x^2\,t^c_6, \ldots).
$$
(Recall that $t_2 = t_{2k+1} =t^c_2 = t^c_{2k+1} = 0$ for every $k \geq 0$ when
$p=4$.) Thanks to~\eqref{relM}, the first equation of~\eqref{system-simple} becomes
$$
 xF'_z(z,u,x) = \theta(x\,R),
$$
where $R=R(z,u)$ is as before.
We can express $\PP_c(S_n=k)$ is terms of $F'_z$:
$$
\PP_c(S_n=k)= \frac{[z^{n-1}x^k] F'_z(z,u,x)}{[z^{n-1}] F'_z(z,u,1)}=
\frac{[z^{n-1}x^{k+1}]  \theta(x\,R)}{[z^{n-1}] \theta(R)}.
$$
 We can now
 apply Proposition~IX.1 from~\cite[p.~629]{flajolet-sedgewick}. Proposition~\ref{prop:4v-pos}
 guarantees that its  hypotheses are indeed satisfied, and this
 gives~\eqref{PSn}, using the expression~\eqref{theta-phi-4V} of $\theta$.
\end{proof}

\subsection{Tree-rooted maps: Number of internally active edges}

Fix $n \in \N$ and $u \in [-1,+\infty)$. Consider the following
probability  distribution on 4-valent
tree-rooted maps $(M,T)$ having $n$ faces :
$$
\mathbb P_i(M,T) =  \frac {(u+1)^{i(M,T)}}{f_n(u)},
$$
where $i(M,T)$ is the number of internally active edges in
$(M,T)$. Eq.~\eqref{F-act} shows that this is indeed a probability
distribution. Under this
 distribution, let  $I_n$ denote the number of  internally active
 edges. As shown by~\eqref{F-sandpile}, $I_n$ can also be
 described as the level $\ell(C)$ of a  recurrent sandpile configuration $C$
 of an $n$-vertex quadrangulation $M$, drawn according to the  distribution
$$
\mathbb P_s(M,C) =  \frac {(u+1)^{\ell(C)}}{f_n(u)}.
$$
\begin{prop}
 The expected number of internally active edges undergoes a (very
 smooth) phase
 transition at $u=0$: as $n\rightarrow \infty$,
\beq\label{EIn}
\mathbb E_i(I_n) \sim \kappa_u \, n,
\eeq
with
$$
 \kappa_u = \frac {(1+u)\Phi(\tau)}{\tau -u\Phi(\tau)}
$$
where  $\Phi$ is given
by~\eqref{theta-phi-4V} and $\tau\equiv \tau_u$ is defined in Proposition~\ref{tetra}.
The function $\kappa_u$ is real-analytic everywhere except at $0$, where it is
still infinitely differentiable: as $u\rightarrow 0^+$,
$$
\kappa_u = \frac {(1+u)\Phi(1/27)}{1/27 -u\Phi(1/27)}+ O \left(\exp\left(-
\frac{2\pi}{\sqrt 3 u}\right)
\right).
$$
\end{prop}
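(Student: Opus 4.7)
The plan is to mirror the proof of Proposition~\ref{random-forest}, now exploiting~\eqref{F-act}, which reads $f_n(u)=\sum_{(M,T)}(u+1)^{\inte(M,T)}$. Differentiating in $u$ yields
$$
\E_i(I_n) = (u+1)\,\frac{f_n'(u)}{f_n(u)} = (u+1)\,\frac{[z^{n-1}]F''_{zu}(z,u)}{[z^{n-1}]F'_z(z,u)},
$$
and by~\eqref{Fzu} one has $F''_{zu}=\Phi(R)\theta'(R)/(1-u\Phi'(R)) = \Phi(R)\,F''_{zz}$. I would then split into three cases according to the sign of $u$.

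For $u>0$, the argument is essentially that of Proposition~\ref{random-forest}: the square-root singularity~\eqref{R-sing-pos} gives $1-u\Phi'(R)\sim u\Phi''(\tau)\gamma\sqrt{1-z/\rho}$, hence $F''_{zu}\sim\Phi(\tau)\theta'(\tau)/\bigl(u\Phi''(\tau)\gamma\sqrt{1-z/\rho}\bigr)$. Combining with~\eqref{Fprime-sing-pos} and applying the transfer theorem, the ratio is asymptotic to $\Phi(\tau)/\rho\cdot n$, and multiplying by $(u+1)$ together with $\rho=\tau-u\Phi(\tau)$ produces $\kappa_u n$. For $u=0$, everything becomes explicit: $R(z,0)=z$ so $F'_z(z,0)=\theta(z)$ and $F''_{zu}(z,0)=\theta'(z)\Phi(z)$, and singular analysis at $z=1/27$ (via~\eqref{sing-theta} and $\Phi(1/27)<\infty$) directly gives $\E_i(I_n)\sim 27\Phi(1/27)\,n=\kappa_0 n$.

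For $u\in[-1,0)$ I would rely on Proposition~\ref{prop:4v-neg}. Now $\Phi(R(z,u))\to\Phi(1/27)$ as $z\to\rho$, and combining~\eqref{Rsing-neg} with~\eqref{sing-phi} yields $\Phi(R)-\Phi(1/27)=\bu\rho(1-z/\rho)+O\bigl((1-z/\rho)/\ln(1-z/\rho)\bigr)$. Writing
$$
F''_{zu} = \Phi(1/27)\,F''_{zz} + \bigl(\Phi(R)-\Phi(1/27)\bigr)\,F''_{zz},
$$
the polynomial part of the second factor contributes only to low-index coefficients, while the $O\bigl((1-z/\rho)/\ln(1-z/\rho)\bigr)$ remainder multiplied by the bounded $F''_{zz}$ yields coefficients strictly smaller than $[z^n]F''_{zz}\sim C\rho^{-n}/(n\ln^2 n)$ with $C=72\sqrt 3\,\pi\bu^2\rho$ from~\eqref{F-sing-neg}. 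Hence $[z^{n-1}]F''_{zu}\sim \Phi(1/27)\,C\rho^{-n+1}/(n\ln^2 n)$ and $[z^{n-1}]F'_z = nf_n(u)\sim C\rho^{-n+2}/(n^2\ln^2 n)$; the ratio equals $\Phi(1/27)\,n/\rho$, giving $\E_i(I_n)\sim(1+u)\Phi(1/27)\,n/\rho=\kappa_u n$ since $\tau=1/27$ for $u<0$.

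For the smoothness of $\kappa_u$: on $[-1,0)$ the formula $\kappa_u=(1+u)\Phi(1/27)/(1/27-u\Phi(1/27))$ is rational in $u$, hence real-analytic; on $(0,+\infty)$ the analytic implicit function theorem applied to $u\Phi'(\tau)=1$ makes $\tau_u$ real-analytic, and so are $\Phi(\tau_u)$, $\rho_u$ and $\kappa_u$. At $u=0^+$ I would invoke~\eqref{tau-0}: $\tau_u-1/27$ is $O(\exp(-2\pi/(\sqrt 3\,u)))$ together with all its derivatives. Propagating this through~\eqref{sing-phi} gives the same bound for $\Phi(\tau_u)-\Phi(1/27)$, hence for $\rho_u-(1/27-u\Phi(1/27))$ (this is~\eqref{rho-0}), and finally for $\kappa_u$ minus the rational function above, establishing the announced expansion. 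Since that rational function coincides with $\kappa_u$ on the whole of $[-1,0]$, all one-sided derivatives at $0$ match, proving $C^\infty$ smoothness. The main obstacle is the $u<0$ case: rigorously showing that the subdominant part of $\Phi(R)-\Phi(1/27)$ produces coefficients strictly smaller than $[z^n]F''_{zz}$ when convolved with the latter, which relies on the $\Delta$-analyticity machinery of~\cite{flajolet-sedgewick} together with the quantitative singular expansions of Section~\ref{sec:asympt-4}.
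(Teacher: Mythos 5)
Your proof is correct and follows essentially the same route as the paper's: the paper also writes $\E_i(I_n)=(u+1)f_n'(u)/f_n(u)$, uses $F''_{zu}=\Phi(R)F''_{zz}=\bu(R-z)F''_{zz}$, and reduces everything to transferring the singular expansions of $R$, $F'_z$ and $F''_{zz}$ established in Section~\ref{sec:asympt-4}. For $u>0$ the paper takes a small shortcut by observing $\E_i(I_n)=(1+\bu)\E_c(C_n)$ and invoking Proposition~\ref{random-forest} directly, where you re-derive it; and for $u<0$ the paper's decomposition $\bu(R-z)F''_{zz}$ with $\bu(1/27-\rho)=\Phi(1/27)$ is identical to your split $\Phi(1/27)F''_{zz}+(\Phi(R)-\Phi(1/27))F''_{zz}$, since $\Phi(R)=\bu(R-z)$ by the defining equation for $R$. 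One minor inaccuracy: your claimed remainder $\Phi(R)-\Phi(1/27)=\bu\rho(1-z/\rho)+O\bigl((1-z/\rho)/\ln(1-z/\rho)\bigr)$ is sharper than what the quoted estimate~\eqref{Rsing-neg} actually yields (which is only a ``$\sim$'', hence $o(1-z/\rho)$ for the error); this is harmless because an $O(1-z/\rho)$ estimate already makes the correction term subdominant to the $1/\ln(1-z/\rho)$ singularity of $F''_{zz}$ after transfer.
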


\begin{proof}
We have
\beq\label{EIn-expr}
\mathbb E_i(I_n) = \sum_{(M,T)}i(M,T)  \frac
{(u+1)^{i(M,T)}}{f_n(u)} 
= (u+1) \frac{f'_n(u)}{f_n(u)} 
=  (u+1) \frac{[z^{n-1}] F''_{zu}(z,u)}{[z^{n-1}] F'_{z}(z,u)}.
\eeq
Comparing with~\eqref{ECn}, we see that for $u>0$, we have
$
\E_i(I_n)=(1+\bu) \E_c(C_n)
$. Thus~\eqref{EIn} follows from Proposition~\ref{random-forest} when $u>0$. 
The expansion of $\kappa_u$ near $0^+$ follows from the
estimate~\eqref{tau-0} of $\tau$ and the expansion~\eqref{sing-phi} of~$\Phi$.

Let us now take $u \in [-1,0)$. The series $F''_{zu}$ is still given
  by~\eqref{Fzu}, which can also be written $\Phi(R) F''_{zz}$
  (by~\eqref{Fzz}), or $\bu(R-z)F''_{zz}$. In view of the estimates~\eqref{Rsing-neg}
  and~\eqref{F-sing-neg} of $R$ and $F''_{zz}$, we find
$$
[z^{n-1}]F''_{zu}(z,u) \sim \bu(1/27-\rho) [z^{n-1}] F''_{zz}(z,u).
$$
Returning to~\eqref{EIn-expr} gives~\eqref{EIn} by singularity analysis, since
$\rho=1/27-u\Phi(1/27)$.

 When $u=0$, we have $R=z$. Hence~\eqref{Fzu} reads
$F''_{zu}(z,0)=\Phi(z) \theta'(z)$, while $F'_z(z,0)= \theta(z)$. As
 above, \eqref{EIn} follows from~\eqref{EIn-expr} by singularity
 analysis, 
using~\eqref{sing-phi},~\eqref{sing-theta} and~\eqref{theta-prime}. 
\end{proof}

\section{Asymptotics for cubic forested  maps}
\label{sec:asympt-3}

We study in this section the singular behaviour of the series $F(z,u)$
that counts cubic forested maps by the number of components, and the
asymptotic behaviour of its $n$th coefficient $f_n(u)$. As
expected, we observe a ``universality'' phenomenon:  our results are
qualitatively the same as for 4-valent maps (Theorem~\ref{tetra}).
However, the cubic case is more difficult since we now have to
deal with a pair of equations:
$$
R=z+u \Phi_1(R,S), \quad S=u\Phi_2(R,S),
$$
where $\Phi_1$ and $\Phi_2$ are given by~\eqref{phi1}
and~\eqref{phi2}. Our results are  less complete than in the
4-valent case: when $u<0$, we only determine the singular
behaviour of $F'(z,u)$ as $z $ approaches the radius of $F'$ on the real
axis. We do not know if $F'$ has  dominant singularities other than
its radius. Consequently, we have not obtained the asymptotic
behaviour of $f_n(u)$  when $u<0$.

\begin{theo}\label{cubique} 
Let $p=3$, and  take $u\ge -1$. The radius
  of convergence of $F(z,u)$  reads
 $$
    \rho_u=\tau -u \Phi_1(\tau, \sigma)
$$
where the pair $(\tau, \sigma)$ satisfies
$$
\sigma= u \Phi_2(\tau, \sigma)
$$ 
and
$$\left\{
\begin{array}{lll}
 \displaystyle64  \tau =  (1-4\sigma)^2 & \hbox{if } u\le 0,
\\
\\
\displaystyle \left(1-u\Phi_1^{x}(\tau, \sigma)\right)
\left(1-u\Phi_2^{y}(\tau, \sigma)\right)
= u^2 \Phi_1^{y}(\tau,\sigma)\Phi_2^{x}(\tau, \sigma) & \hbox{if } u>0.
\end{array}\right.
$$
 The series $\Phi_1$ and $\Phi_2$
are given by~\eqref{phi1} and~\eqref{phi2}, and $\Phi_i^{x}$
(resp. $\Phi_i^{y}$) denotes the derivative of $\Phi_i$ with respect
to its first (resp. second) variable. 
\\
In particular, $\rho_u$ is an algebraic function of $u$ on $[-1,0]$:
\beq\label{rho-3}
\rho_u= \frac{3(1-u^2)^2\pi^4+96u^2\pi^2(1-u^2)+512u^4+ 16u\sqrt2 \left( \pi^2(1-u^2)+8u^2\right)^{3/2}}{192\pi^4(1+u)^3}.
\eeq
  Let $f_n(u)$  be the coefficient in $z^n$ in $F(z,u)$. There exists 
   a positive constant $c_u$ such that
$$
f_n(u) \sim \left\{\begin{array}{lll}
\displaystyle c_u\, {\rho_u^{-n}}{n^{-3} }  & \hbox{ if }  u = 0,\\
\displaystyle  c_u \,{\rho_u^{-n}}{n^{- 5 /2}}  & \hbox{ if }  u > 0.
\end{array}\right.
$$
For $u \in [-1,0]$, the series $F'(z)\equiv F'(z,u)$  has the
following singular expansion  as $z\rightarrow \rho_u^-$:
\beq\label{exp-3}
F'(z) = F'(\rho_u) + \alpha (\rho_u-z) + 
\beta\,
\frac{  \rho_u-z}{\ln(\rho_u-z)}\left(1+o(1)\right),
\eeq
where 
$$
\beta =\frac{4u-3\sqrt2 \sqrt{\pi^2(1-u^2)+8u^2}}{2u^2} <0.
$$
\end{theo}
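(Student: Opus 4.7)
The plan is to adapt the strategy used for $4$-valent maps in Section~\ref{sec:asympt-4}, where the series $R$ (and then $F' = \theta(R)$) was analyzed by two distinct approaches depending on the sign of $u$. Here the same two mechanisms drive the singular behavior of the pair $(R, S)$, but we must work with a genuine system of two equations. For $u > 0$ the radius $\rho_u$ is determined by the first point at which the Jacobian of the map $(R, S) \mapsto (z + u\Phi_1(R, S), u\Phi_2(R, S))$ becomes singular; for $u \le 0$ it is determined by $(R, S)$ reaching the boundary $\{64R = (1-4S)^2\}$ of the analyticity domain of $\Phi_1$ and $\Phi_2$, via the rewriting~\eqref{pp1}--\eqref{pp2} that expresses these series in terms of $\Psi_1, \Psi_2$ (which have radius $1/64$). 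In all cases, the asymptotic behavior of $f_n(u)$ and the singular expansion of $F'$ are obtained by a transfer theorem after establishing the local nature of the singularity of $(R, S)$ at $\rho_u$.

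For the positive case $u > 0$, the system~\eqref{defR}--\eqref{defS} has non-negative coefficients and is aperiodic (check the first terms) and irreducible, so Drmota's generalization of the smooth implicit schema to systems applies (see~\cite{drmota-systems}). The critical point $(\tau, \sigma)$ is characterized by the vanishing of $\det(I - u \,\mathrm{Jac}(\Phi_1, \Phi_2)(\tau, \sigma))$, which is exactly the stated condition. One concludes that $R$ and $S$ have a square-root dominant singularity at $\rho_u = \tau - u \Phi_1(\tau, \sigma)$ and, via the explicit expression~\eqref{F-cubic} of $F'$ in terms of $R$ and $S$, so does $F'$. A standard transfer theorem~\cite{flajolet-sedgewick} yields the $n^{-5/2}$ asymptotics. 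The intermediate case $u = 0$ is handled directly via the closed form~\eqref{Fz0} and Stirling's formula, producing the $n^{-3}$ behavior.

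The negative case $u \in [-1, 0)$ requires the most work, and the key step is to reduce the two-variable system to a single inversion equation. Setting $V := \sqrt{1-4S}$ and $T := R/V^4$, equation~\eqref{phi2enpsi} becomes a quadratic in $V$ whose coefficients are polynomial in $u$ and $\Psi_2(T)$; it can be solved explicitly, and substitution into~\eqref{phi1enpsi} expresses $z$ as a function $\Omega(T)$ of $T$ alone (with $u$ fixed). Since $\Psi_1, \Psi_2$, and hence $\Omega$, are analytic on $[0, 1/64)$, and since $u < 0$ forces $\Omega$ to be strictly increasing on this interval, Corollary~\ref{lemmeasympt} tells us that $T(z)$ reaches $1/64$ exactly at $\rho_u = \Omega(1/64)$. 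Translating back to $(R, S)$, the condition $T = 1/64$ is $64 \tau = (1 - 4\sigma)^2$, and together with $\sigma = u\Phi_2(\tau, \sigma)$ this determines $(\tau, \sigma)$ as an algebraic function of $u$, whence the explicit formula~\eqref{rho-3} after evaluating $\Psi_1(1/64)$ and $\Psi_2(1/64)$ in closed form via the identities~\eqref{ed-cubic}.

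To obtain the singular expansion~\eqref{exp-3} of $F'$, the crucial ingredient is that the coefficients of $\Psi_2(z)$ are asymptotic to a constant times $64^n/n$ (by Stirling applied to the hypergeometric formula in Section~\ref{sec:cubic-de}), so $\Psi_2$ has a logarithmic singularity at $1/64$. Integrating the relations~\eqref{ed-cubic} propagates this to an $\varepsilon \ln \varepsilon$ expansion for $\Psi_1$, and hence for $\Omega(1/64) - \Omega(1/64 - \varepsilon)$, of the same shape as~\eqref{sing-phi} in the 4-valent case. The log-inversion theorem (Theorem~\ref{lninversion}) applied to $\Psi(z) := \Omega(1/64) - \Omega(1/64 - z)$ then provides an analytic continuation of $T$ in a log-type domain at $\rho_u$ with $1/64 - T(\rho_u - \varepsilon) \sim \bar\kappa \,\varepsilon / |\ln \varepsilon|$ for an explicit $\bar\kappa > 0$. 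Back-substituting into the rational expressions of $R$ and $S$ in terms of $V$ and $T$, and then into the formula~\eqref{F-cubic} for $F'$, yields~\eqref{exp-3}; the constant $\beta$ is extracted from this computation. The main obstacle is precisely this algebraic bookkeeping: verifying that no unexpected cancellation of the $1/\ln$ term occurs when passing from $T$ to $R$, $S$, $F'$, and confirming the stated sign and closed form of $\beta$.
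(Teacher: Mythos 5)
Your proposal is correct in outline but takes a genuinely different route from the paper in both the $u>0$ and $u<0$ cases, and in the latter it leaves a gap that the paper's own proof fills with combinatorial positivity.

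For $u>0$ the paper deliberately \emph{avoids} the general theory of positive systems: it first studies the auxiliary series $\tilde S$ defined by $\tilde S = u\Phi_2(z,\tilde S)$, whose defining equation does not involve $z$ explicitly beyond the substitution, and then recasts $R$ as the solution of a \emph{single} inversion $\Omega(R)=z$ with $\Omega(y)=y-u\Phi_1(y,\tilde S(y))$, to which the Flajolet--Sedgewick smooth implicit schema applies. Your proposal to invoke Drmota's system theorem directly is a legitimate alternative, but the paper explicitly cautions (and cites~\cite{drmota-systems,burris}) that identifying the radius with the solution of the characteristic system can be subtle for genuine systems; if you go that route you must verify not just positivity and aperiodicity but also the irreducibility/strong-connectivity hypotheses, which is precisely the bookkeeping the paper bypasses by decoupling.

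For $u<0$ your idea of eliminating $V=\sqrt{1-4S}$ by solving a quadratic and writing $z=\Omega(T)$ with $T=R/(1-4S)^2$ is attractive and more explicit than the paper's $\Omega(y)=y-u\Phi_1(y,\tilde S(y))$. However, the assertion that ``$u<0$ forces $\Omega$ to be strictly increasing on $[0,1/64)$'' is not self-evident and needs a proof; in the paper this is exactly where the combinatorial positivity results from Section~\ref{sec:u+1} intervene. Concretely, the paper rules out a vanishing of $\Omega'$ on $[0,\tau)$ by noting, via Corollary~\ref{positiv}, that $\bu(R'(z)-1)$ has non-negative coefficients, hence $R'(z)\le 1$ and $\Omega'(R(z))=1/R'(z)\ge 1>0$; and it rules out blow-up of $\tilde S$ on $[0,\tilde\rho)$ (case (iv) of Proposition~\ref{lemmeasympt2}) using the same positivity machinery. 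Your version would also need to justify that the branch of the quadratic in $V$ you select stays real and analytic along $[0,1/64)$, that the resulting curve $(T(z),V(T(z)))$ is the one carrying the power-series solution $(R,S)$, and that no cancellation of the $1/\ln$ term occurs when transporting the log-singularity of $T$ through the quadratic formula into $R$, $S$ and finally $F'$. You acknowledge that last point as ``algebraic bookkeeping,'' but together with the unproven monotonicity of $\Omega$ it is the substantive part of the argument; the paper handles the transport by working directly with the local expansions~\eqref{exp-psi1}--\eqref{exp-psi2} of $\Psi_1,\Psi_2$ and solving the linear system~\eqref{exp2}--\eqref{exp4} in $(s,r,x)$ rather than passing through an explicit formula for $V(T)$.
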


\noindent{\bf Remarks} \\
1. As in the 4-valent case, the singular behaviour of $F'$ obtained when
$u<0$ is incompatible with D-finiteness~\cite[p.~520
  and~582]{flajolet-sedgewick}.

\begin{cor} For $u\in [-1,0)$, the \gf\ $F(z,u)$ of cubic forested
    maps is not D-finite. The same holds when $u$ is an indeterminate.
\end{cor}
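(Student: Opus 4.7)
The plan is to mimic the argument used for the 4-valent case in Corollary~\ref{cor-nonDF}. By classical Frobenius theory (see \cite[pp.~520 and~582]{flajolet-sedgewick}), any D-finite series $Y(z) \in \mathbb{C}[[z]]$ admits, near any of its finite singularities $\rho$, a local representation as a finite $\mathbb{C}$-linear combination of terms of the form $(\rho-z)^{\alpha} \ln^k(\rho-z)\,A(\rho-z)$, with $\alpha \in \mathbb{Q}$, $k \in \mathbb{Z}_{\geq 0}$, and $A$ analytic at the origin. In particular, a singular term of the form $C/\ln(\rho-z)$ with $C \neq 0$ cannot appear in any such combination.

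Fix $u \in [-1,0)$ and suppose, for a contradiction, that $F(z,u)$ is D-finite; then so is $F''(z,u)$. Starting from the singular expansion~\eqref{exp-3} and using
\[
\frac{d}{dz}\!\left(\frac{\rho_u-z}{\ln(\rho_u-z)}\right) \;=\; -\frac{1}{\ln(\rho_u-z)} \;+\; \frac{1}{\ln^2(\rho_u-z)},
\]
differentiating twice yields, as $z \to \rho_u^-$,
\[
F''(z,u) \;=\; -\alpha \;-\; \frac{\beta}{\ln(\rho_u-z)}\bigl(1+o(1)\bigr),
\]
with $\beta \neq 0$ by the explicit expression given in Theorem~\ref{cubique}. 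This contradicts the local structure of D-finite series recalled above, and so $F(z,u)$ is not D-finite.

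For the statement about $u$ as an indeterminate, suppose $F(z,u)$, viewed as an element of $\mathbb{Q}(u)[[z]]$, satisfies a non-trivial linear ODE $\sum_{k=0}^d P_k(z,u)\,\partial_z^k F(z,u) = 0$ with $P_k \in \mathbb{Q}[z,u]$ and $P_d \neq 0$. Since $P_d$ is a non-zero bivariate polynomial, we may pick $u_0 \in [-1,0)$ for which $P_d(z,u_0) \not\equiv 0$ as a polynomial in $z$. Specializing the ODE at $u = u_0$ produces a non-trivial linear differential equation with polynomial coefficients satisfied by $F(z,u_0)$, contradicting the first part of the corollary.

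The only non-routine point is justifying the termwise differentiation of~\eqref{exp-3}. I expect the argument establishing Theorem~\ref{cubique} (Section~\ref{sec:asympt-3}) to proceed, as in the 4-valent case, by constructing an analytic continuation of $F'$ to a $\Delta$-domain of radius $\rho_u$ and proving that~\eqref{exp-3} holds as an analytic asymptotic expansion inside this domain; one may then differentiate it term by term. Failing that, one can bypass the issue by expressing $F''$ as a rational function of $R$, $S$, $R'$, $S'$ and estimating $R(z)$ and $S(z)$ as $z \to \rho_u$ by applying the log-inversion machinery of Section~\ref{sec:inversion} directly to the system~(\ref{defR}--\ref{defS}); this will again produce a $1/\ln(\rho_u - z)$ singularity for $F''(z,u)$ with a non-zero coefficient, which is all that is needed.
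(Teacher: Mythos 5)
Your core idea—that the singular expansion~\eqref{exp-3} is incompatible with the local structure of D-finite series near a singularity, followed by specialization for the indeterminate $u$—is exactly the paper's (very terse) argument. Two remarks are in order.

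First, the detour through $F''$ is unnecessary and is what creates the gap you then try to patch. The expansion~\eqref{exp-3} of $F'$ itself already contains the obstructing term $\beta(\rho_u-z)/\ln(\rho_u-z)$ with $\beta\neq 0$, and the same scale argument you give for $1/\ln$ shows that $(\rho_u-z)/\ln(\rho_u-z)$ cannot appear, as $z\to\rho_u^-$, in a finite combination of the allowed local basis elements $(\rho_u-z)^{\alpha}\ln^{k}(\rho_u-z)A(\rho_u-z)$ with $k\ge 0$: it lies strictly between $(\rho_u-z)$ and $(\rho_u-z)^{1-\epsilon}$ for every $\epsilon>0$, yet is not $\sim c\,(\rho_u-z)\ln^{k}(\rho_u-z)$ for any $k\ge 0$. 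Since $\rho_u$ is a genuine singularity of $F'$ (its $z$-coefficients are non-negative for $u\in[-1,0)$ by $(u+1)$-positivity, so Pringsheim applies), this already contradicts D-finiteness, with no differentiation required. Incidentally, you say ``differentiating twice'' but pass from $F'$ to $F''$, a single differentiation.

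Second, your expectation that the cubic case proceeds ``as in the $4$-valent case, by constructing an analytic continuation of $F'$ to a $\Delta$-domain'' is explicitly contradicted by the opening of Section~\ref{sec:asympt-3}: for $u<0$ the expansion~\eqref{exp-3} is established only as $z\to\rho_u^-$ along the real axis, and the existence of other dominant singularities is left open. So the termwise differentiation you rely on is not licensed by what is proved. This is harmless here because the argument can be run directly on $F'$, but the expectation itself is wrong, and your fallback (estimating $F''$ through $R$, $S$, $R'$, $S'$) would have been the safer route had differentiation actually been needed. The specialization argument for $u$ an indeterminate is correct and is what the paper intends.
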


\noindent 2. The series   $F(z,0)$ has a simple   explicit
expression given by~\eqref{Fz0}:
$$
F(z,0)=3 \sum_{\ell \ge 1} \frac{(4\ell)!}{(2\ell-1)! (\ell+1)!
  (\ell+2)! }z^{\ell+2}.
$$
The above theorem  follows in this case from Stirling's formula. One has
$\sigma=0$ and $\rho_0=\tau=1/64$.  We will thus focus below on the cases $u>0$
and $u<0$.

\noindent 3. At $u=-1$, one finds $\rho_{-1}= \pi^2/384$, a beautiful
transcendental radius of convergence for the series counting cubic
maps equipped with an internally inactive spanning tree.

\subsection{The series $\Phi_1$, $\Phi_2$, $\Psi_1$ and $\Psi_2$}
\label{sec:pp}

We have performed  in Section~\ref{sec:cubic-de} a useful reduction by showing
that the bivariate series  $\Phi_1(x,y)$ and $\Phi_2(x,y)$ can  be
expressed in terms of the  univariate hypergeometric series $\Psi_1$ and
$\Psi_2$ (see~(\ref{pp1}--\ref{pp2})).
 The $i$th
coefficient of $\Psi_1$  is asymptotic to  $64^i/i^2$, up to a
multiplicative constant, and the same holds for $\Psi_2$. Hence both
series have  radius of convergence $ 1/ {64}$, converge at this point,
but their derivatives diverge. In fact,
$$
\Psi_1(z)/z=  \, _2F_1(1/4, 3/4;2;64z),
$$
so that $\Psi_1$ can be analytically defined on $\C \setminus[1/64, +\infty)$. The
  same holds for $\Psi_2(z)$ in view of~\eqref{ed-cubic}. It follows
  from~\cite[Eq.~(15.3.11)]{AS} that, as $\vareps\rightarrow 0$ in
  $\C\setminus \R^-$,
\beq\label{exp-psi1}
\Psi_1\left(\frac{1} {64}-\varepsilon\right) = 
{\frac {\sqrt {2}}{24\, \pi }}
+  {\frac {\sqrt {2}  }{2\, \pi }}\, \varepsilon \,\ln   \varepsilon
-  {\frac {\sqrt {2}  }{2\, \pi }} \varepsilon
+ O \left( {\varepsilon}^{2} \ln \varepsilon \right).
\eeq
By~\eqref{ed-cubic}, we also have
\beq
\label{exp-psi2}
\Psi_2\left(\frac{1} {64}-\varepsilon\right) = 
\frac 1 2 -{\frac {\sqrt {2}}{\pi }}
+ {\frac {4\sqrt {2} }{ \pi }}\, \varepsilon\,\ln 
  \varepsilon 
+ {\frac {12\,\sqrt {2} \, }{\pi }}  \varepsilon
+ O \left( {\varepsilon}^{2} \ln \varepsilon \right).
\eeq
Let us now return to $\Phi_1$ and $\Phi_2$.  The series $\sqrt{1-4y}$ has radius
$1/4$, the series $\Psi_1$ and $\Psi_2$ have radius $1/64$, and thus
$\Phi_1(x,y)$ and $\Phi_2(x,y)$ converge absolutely for $|y|<1/4 
$ and $64|x|< (1-4|y|)^2$ (Figure~\ref{fig:domaine}, left).  The
expressions~\eqref{pp1} and~\eqref{pp2} show that $\Phi_1$ and
$\Phi_2$ have an analytic continuation for $y \in \C\setminus [1/4,
  +\infty)$ and $x/(1-4y)^2 \in \C\setminus [1/64, +\infty)$
    (Figure~\ref{fig:domaine}, right). As $\Psi_1'(t)$ and
    $\Psi_2'(t)$ tend to $+\infty$ when $t\rightarrow 1/64$, there is
    no way to extended analytically $\Phi_1$ or $\Phi_2$ at a point of
    the \emm critical parabola, $\{64x =(1-4y)^2\}$. 

\begin{figure}[h!]
\begin{center}
 \begin{tabular}{c}\includegraphics[scale=0.3]{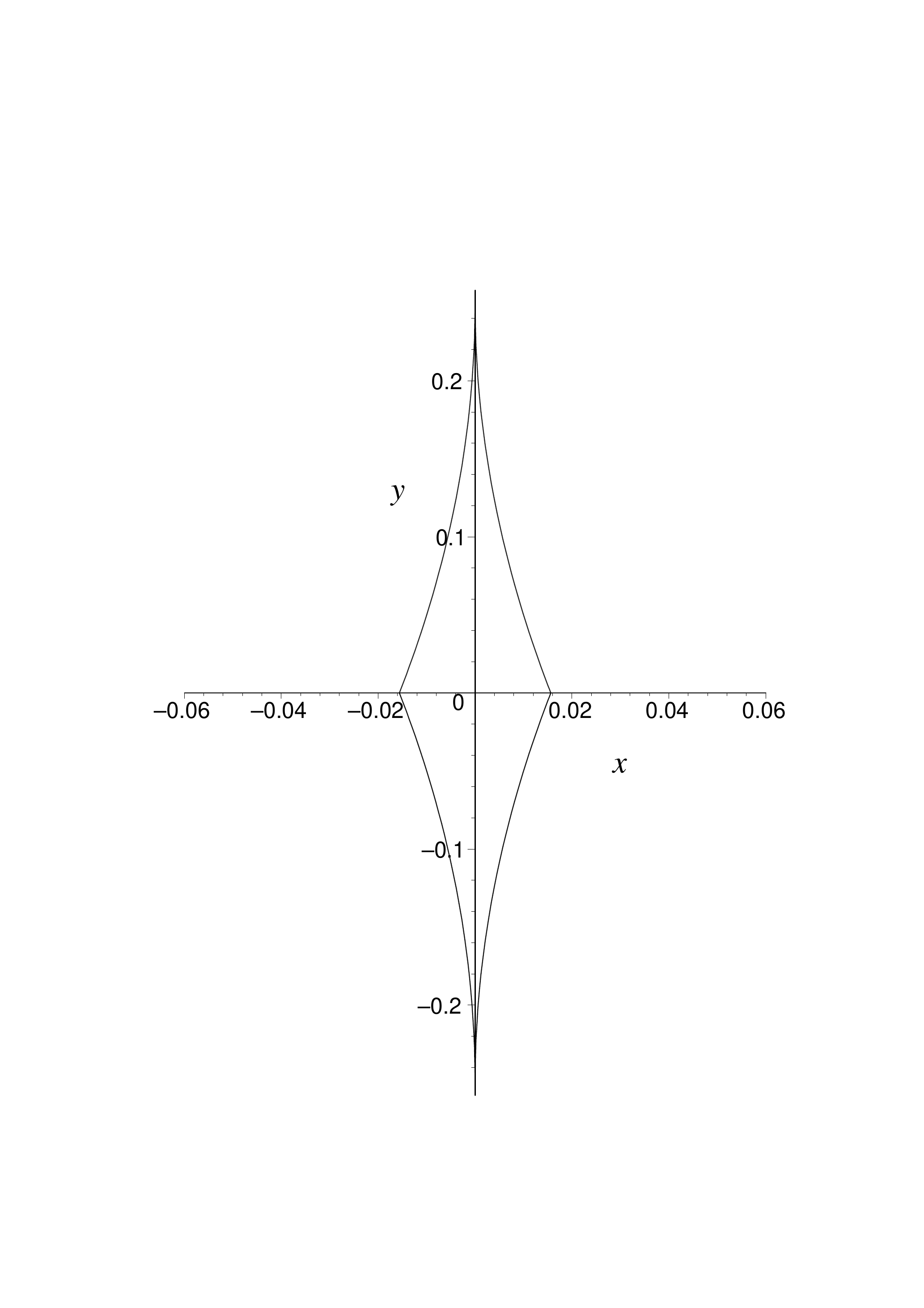}\end{tabular}
\hskip 5mm
 \begin{tabular}{c}\includegraphics[scale=0.3]{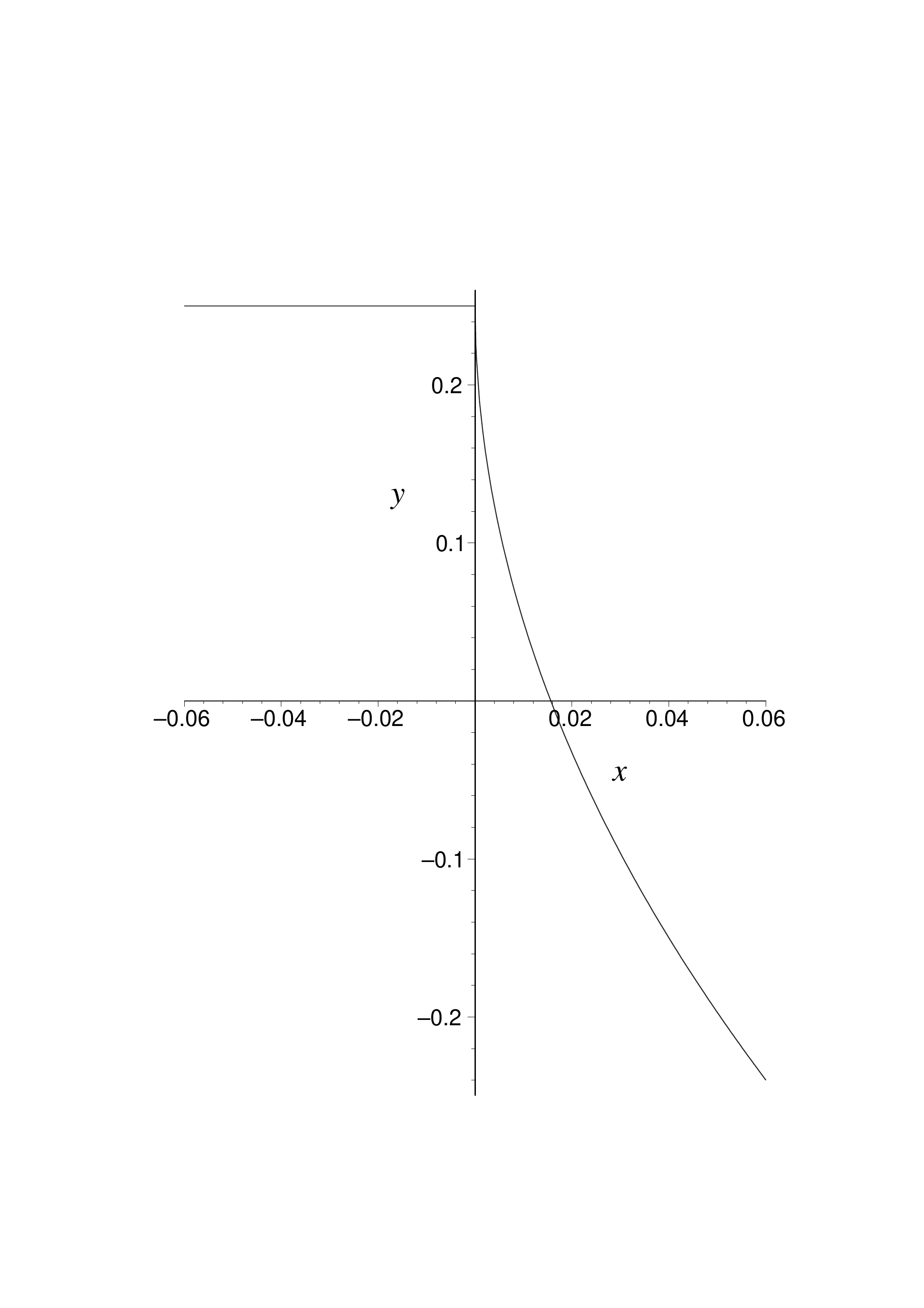}\end{tabular}
\end{center}
\vskip -10mm
\caption{\emph{Left:} The  domain of absolute convergence of the series $\Phi_1$ and
  $\Phi_2$, in the real plane. \emph{Right:} A domain where an
  analytic continuation exists. No analytic continuation exists at
  a point of the parabola.}
\label{fig:domaine}
\end{figure}

\subsection{When  $u>0$}
\begin{prop}
  Assume $u>0$. The series $R$, $S$ and $F'$ have the same radius of
  convergence, denoted $\rho_u$, which satisfies the conditions stated in
  Theorem~{\rm\ref{cubique}}.  The three series are analytic in a
  $\Delta$-domain of radius $\rho_u$, with a square root singularity at
  $\rho_u$. In particular,
$$
f_n(u) \sim c_u \rho_u^{-n} n^{-5/2}
$$
for some positive constant $c_u$.
\end{prop}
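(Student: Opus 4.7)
The plan is to apply the general theory of implicit systems in the positive smooth case (Drmota--Lalley--Woods type results, see~\cite[Sec.~VII.6]{flajolet-sedgewick} or~\cite{drmota-systems}) to the system $R = z + u\Phi_1(R,S)$, $S = u\Phi_2(R,S)$. Since $u>0$ and $\Phi_1,\Phi_2$ have non-negative coefficients, both $R$ and $S$ are power series with non-negative coefficients; irreducibility of the system is immediate from~\eqref{phi1}--\eqref{phi2} (both $\Phi_1$ and $\Phi_2$ depend non-trivially on both variables), and aperiodicity follows from the first few coefficients. By Pringsheim's theorem the radii of $R$ and $S$ coincide with their smallest positive real singularities, so the whole analysis can be carried out on the positive real axis.

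First I would track $(R(z),S(z))$ along $[0,\rho_u)$, using the strict monotonicity provided by positivity. Differentiating the system yields a linear system for $(R',S')$ whose determinant equals
$$J(z) := \bigl(1-u\Phi_1^x(R,S)\bigr)\bigl(1-u\Phi_2^y(R,S)\bigr) - u^2\Phi_1^y(R,S)\Phi_2^x(R,S),$$
with $J(0)=1$. The radius $\rho_u$ is the smallest positive $z$ where either $J$ vanishes, or $(R,S)$ reaches the critical parabola $64x=(1-4y)^2$ beyond which $\Phi_1,\Phi_2$ cease to be analytic (see Section~\ref{sec:pp}). Setting $\tau=R(\rho_u)$ and $\sigma=S(\rho_u)$, the characteristic system stated in the theorem will then follow, provided we are in the first scenario. \textbf{The main obstacle is precisely to rule out the second scenario when $u>0$:} one must show $64\tau<(1-4\sigma)^2$. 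This interiority is exactly what separates the $u>0$ regime from the $u\le 0$ regime (where, as in the 4-valent case treated in Proposition~\ref{prop:4v-neg}, one hits the parabola and gets logarithmic behaviour), and it should follow from a quantitative argument exploiting that the derivatives $\Psi_1',\Psi_2'$, hence $\Phi_i^x,\Phi_i^y$, diverge as one approaches the parabola, forcing $J$ to vanish first when $u>0$.

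Once this interiority is established, the analytic implicit function theorem applies at $(\rho_u,\tau,\sigma)$, and the standard square root analysis for positive systems (see~\cite[Thm.~VII.6]{flajolet-sedgewick}) yields
$$R(z)=\tau-\gamma_R\sqrt{1-z/\rho_u}+O(1-z/\rho_u),\qquad S(z)=\sigma-\gamma_S\sqrt{1-z/\rho_u}+O(1-z/\rho_u),$$
with $\gamma_R,\gamma_S>0$ read off from the left null vector of the Jacobian at $(\tau,\sigma)$. Both series extend analytically to a common $\Delta$-domain of radius $\rho_u$, with no other dominant singularities (by the usual aperiodicity/positivity argument, as in the $u>0$ part of Proposition~\ref{prop:4v-pos}). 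Since $\theta$ is analytic at the interior point $(\tau,\sigma)$, the identity $F'_z(z,u)=\theta(R,S)$ transfers the square root singularity to $F'$ with a non-zero leading coefficient $-(\gamma_R\theta^x+\gamma_S\theta^y)(\tau,\sigma)$. A direct application of the transfer theorem of singularity analysis~\cite[Ch.~VI]{flajolet-sedgewick} then gives $(n+1)f_{n+1}(u)\sim \bar c_u\rho_u^{-n}n^{-3/2}$, hence $f_n(u)\sim c_u\rho_u^{-n}n^{-5/2}$ for an explicit positive constant $c_u$.
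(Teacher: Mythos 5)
Your proposal applies the full positive-system theory (Drmota--Lalley--Woods) to the pair of equations, whereas the paper deliberately avoids doing so; and the place where your sketch becomes a gap is exactly the place where the paper's different route pays off.

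The gap: you correctly identify that the crux is to show the curve $(R(z),S(z))$ hits the Jacobian zero $J=0$ \emph{before} the parabola $64x=(1-4y)^2$, but your ``should follow from a quantitative argument'' is not an argument, and your heuristic --- the $\Phi_i$-derivatives blow up near the parabola, so $J$ must vanish --- is too quick, because the leading blow-up in $J$ cancels. Writing $\Phi_1,\Phi_2$ in terms of $\Psi_1,\Psi_2$ via~\eqref{pp1}--\eqref{pp2}, the $u^2\Psi_1'\Psi_2'$ contributions from $u^2\Phi_1^x\Phi_2^y$ and from $u^2\Phi_1^y\Phi_2^x$ coincide (both are $8xu^2(1-4y)^{-3}\Psi_1'\Psi_2'$), so the quadratic term of $J$ near the parabola is a cancellation of the fastest-diverging pieces, and what remains mixes $\Psi_1'$, $\Psi_2'$, $\Psi_1$, $\Psi_2$ with several distinct powers of $(1-4y)$. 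Making your approach rigorous would require a genuine sub-leading analysis of $J$ near the parabola, plus a verification that the (non-polynomial, $z$-degenerate: the $S$-equation never sees $z$) system satisfies the hypotheses of whichever version of the system theorem you invoke --- conditions the paper's authors explicitly flag as ``tricky'' and decline to check.

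The paper's route bypasses both difficulties by noticing that the $S$-equation is autonomous in $z$. It defines a univariate auxiliary series $\tilde S$ by $\tilde S=u\Phi_2(z,\tilde S)$ and applies the one-variable Proposition~\ref{lemmeasympt2}: along the real axis, condition (d) ($1-u\partial_y\Phi_2>0$) must hold, while $\partial_y\Phi_2\to+\infty$ at the parabola, so the positive increasing curve $(z,\tilde S(z))$ necessarily stops, with $1-u\partial_y\Phi_2$ reaching zero, strictly \emph{before} the parabola. This cleanly establishes interiority for $\tilde S$, and it supplies the divergence $\tilde S'(\tilde\rho)=+\infty$, which makes the remaining scalar equation $R=z+u\Phi_1(R,\tilde S(R))$ fall into the smooth implicit schema of~\cite[Def.~VII.4]{flajolet-sedgewick}, with a unique $\tau\in(0,\tilde\rho)$ solving the characteristic equation. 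Since $\tau<\tilde\rho$ is interior, the parabola never threatens $R$, $S=\tilde S(R)$, or $F'=\theta(R,S)$. In short: you took the textbook system approach and correctly located where it gets hard; the paper sidesteps that hardness by decoupling into two scalar problems, for which interiority comes for free from the sign of a single derivative.
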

\begin{proof}

Recall that these three series are defined by the
system~\eqref{F-cubic},~\eqref{phi1enpsi},~\eqref{phi2enpsi}.  The analysis of systems of
functional equations can be   a tricky 
exercise, even in the \emm positive case,\footnote{By this, we mean a system
  given by equations of the form $R_i= F_i(R_1, \ldots, R_m)$ where
  the series $F_i$ have non-negative coefficients.} and with 2
equations  only. In particular,  the connection 
between the location of the radius and 
the solution(s) of the so-called \emm characteristic system, is subtle
(see~\cite{drmota-systems,burris}). In our case 
however,  the equation that defines $S$ does not
involve the variable $z$ explicitely, and this allows us to
proceed safely in two steps. As in Section~\ref{sec:enriched}, we
first define $\tilde S\equiv \tilde S(z,u)$ as the unique
power series in $z$  satisfying $\tilde S (0,u)=0$ and 
\begin{eqnarray}
  \tilde S &=& u\,\Phi_2(z,\tilde S) \label{S-t-phi}\\
& =& u \sqrt{1-4\tilde S}\ \Psi_2\left(
\frac z {(1-4\tilde S)^2}\right)+ \frac u 4 \left( 1-\sqrt{1-4\tilde
  S}\right)^2.
\label{S-t}
\end{eqnarray}
We will first study $\tilde S$, and then  move to $R$, which is now defined
by the following equation:
\begin{eqnarray}
R&=&z+u \Phi_1(R, \tilde S(R))
\label{R-t}\\
&=&z+u (1-4\tilde S(R))^{3/2} \Psi_1 \left( \frac{R}{(1-4\tilde
  S(R))^2}\right) -uR,
\label{R-t-Psi}
\end{eqnarray}
where we have denoted for short $\tilde S(z)=\tilde S(z,u)$. Of
course, $S=\tilde S(R)$. 

 So let us begin with $\tilde S$. One can prove  that~\eqref{S-t-phi} fits in the
smooth implicit function schema of~\cite[Def.~VII.4]{flajolet-sedgewick}, but we can actually content
ourselves with an application of Proposition~\ref{lemmeasympt2}, where $\tilde S$
plays the role of $Y$. The series
$H(x,y)= y-u\Phi_2(x,y)$ satisfies the assumptions of this
proposition. Define $\rt$ as in the proposition. Since $\tilde S$ has non-negative  
coefficients, the points $(z, \tilde S(z)) $ form, as $z$ goes from $0$ to
$\rt$,  an increasing curve starting from $(0,0)$ in the plane $\R^ 2$. Condition
(b), together with the properties of $\Phi_2$ described in Section~\ref{sec:pp},
implies that this curve cannot go beyond  the parabola $64x=(1-4y) ^
2$.
This rules out the possibilities (i) and (iv). Now $H'_ 
y(x,y)=1-u\Phi'_2(x,y)$ approaches $-\infty$ as $(x,y)$ approach the
parabola, and thus Condition (d) rules out the possibility (iii). The
curve $(z, \tilde S(z))$ thus ends (at $z=\rt$) before reaching the
parabola.  Moreover (ii) holds: $H'_y(\rt, \tilde S(\rt))=0$, or
equivalently, 
\beq\label{char-Stilde-positif}
1= u\pd{ \Phi_2} y (\rt,\tilde S(\rt)).
\eeq
(The $\liminf$ of (ii)
becomes here a true
limit because of the positivity of the coefficients of $\Phi_2$ and
$\tilde S$.) By (a), the radius of $\tilde S$ is at least $\rt$.
Finally, it follows from~\eqref{S-t-phi} that for $z \in [0, \rt)$,
 \beq\label{Sprime}
\tilde S'(z) = u \, \frac {\pd {\Phi_2} x (z,\tilde S(z))} {1- u\, \pd
  {\Phi_2} y (z,\tilde S(z))}.
\eeq
By~\eqref{char-Stilde-positif}, this  derivative tends to $ + \infty$
as $z\rightarrow \rt$. Hence $\tilde S$ has radius $\tilde
\rho$. Figure~\ref{fig:Stilde-pos} (left) illustrates 
the behaviour of $\tilde S$ on $[0, \tilde \rho]$.

\begin{figure}[h!]
\begin{center}
\begin{tabular}{c}\includegraphics[scale=0.3]{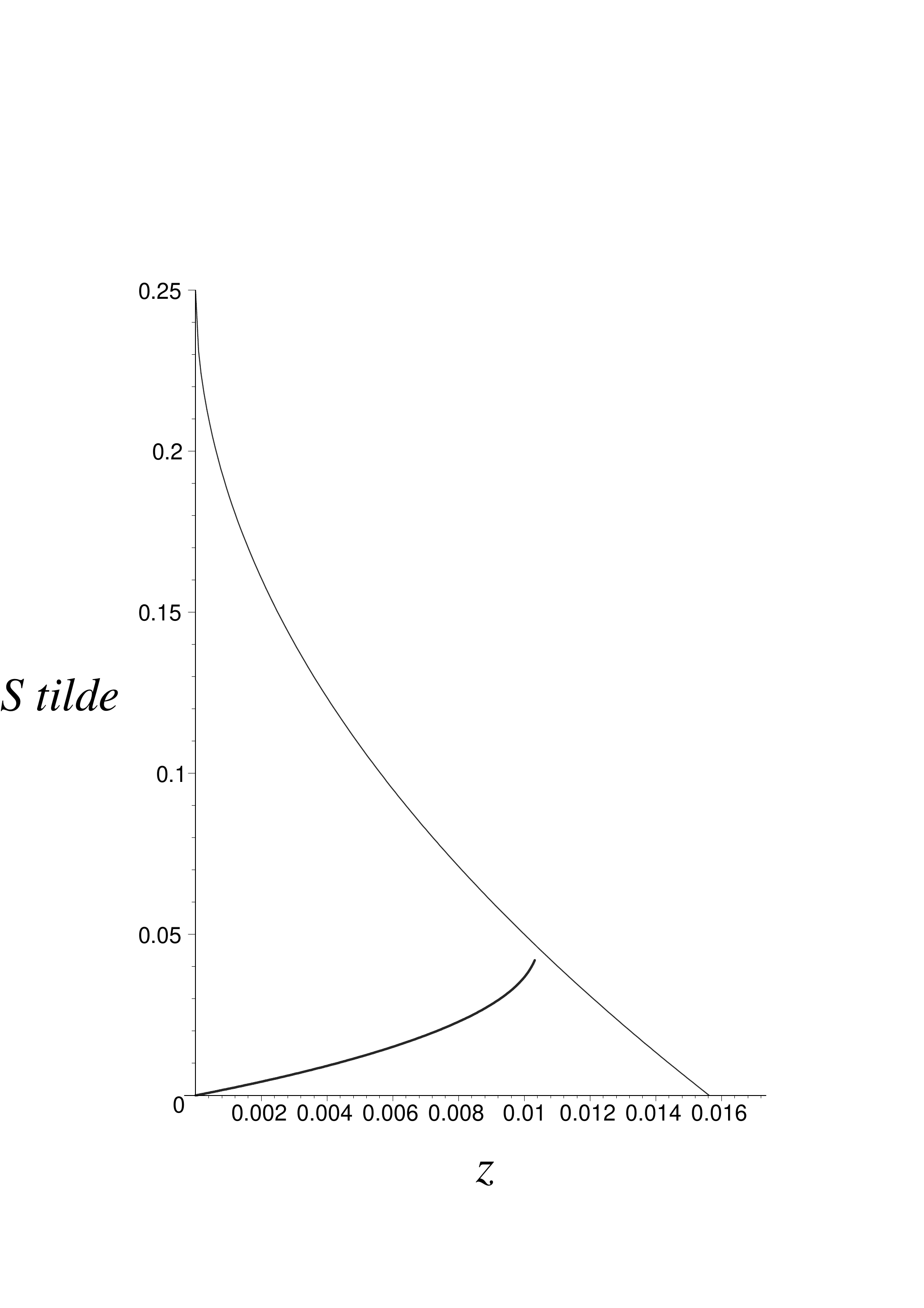}\end{tabular}
 \begin{tabular}{c}\includegraphics[scale=0.3]{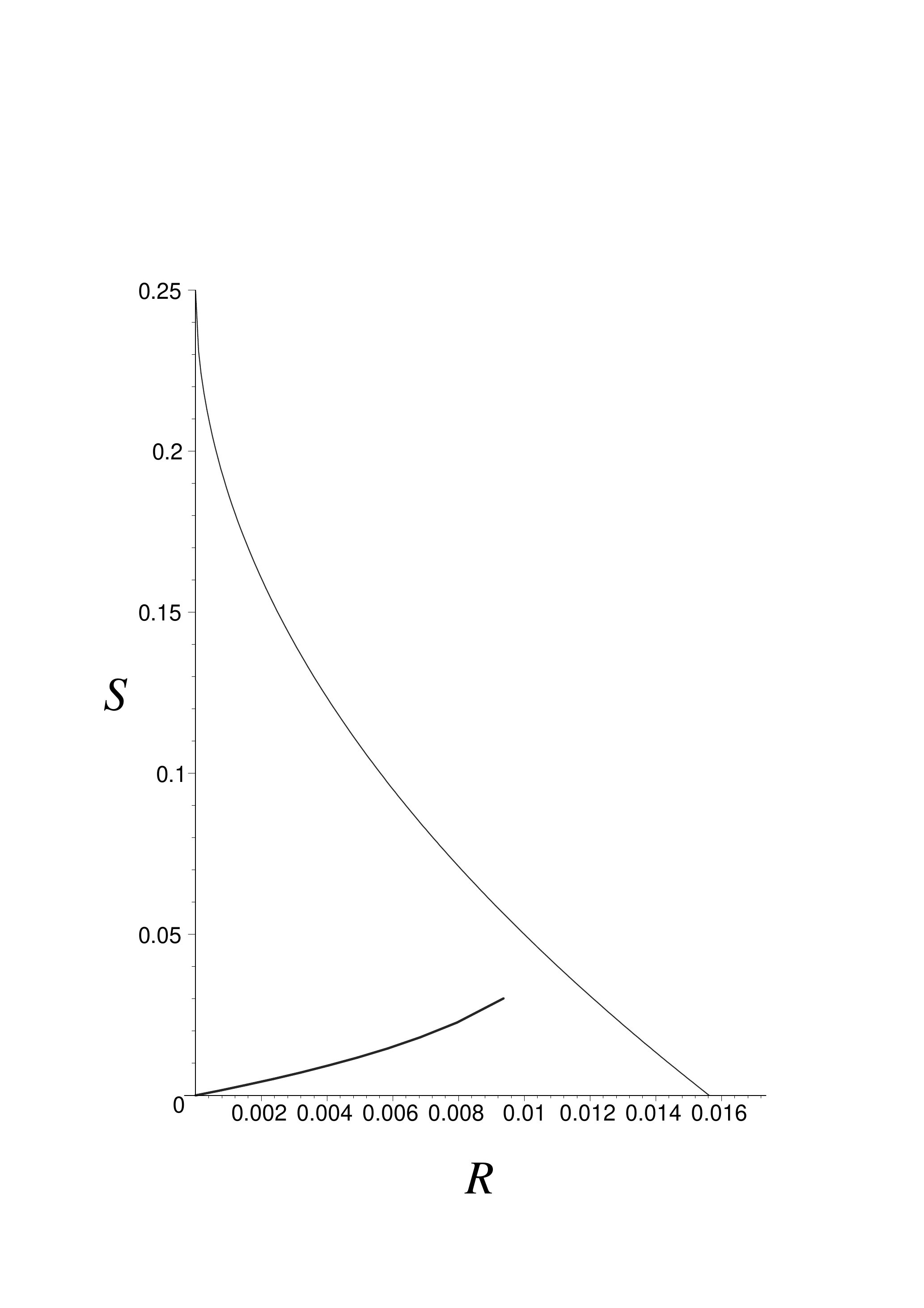}\end{tabular}
\end{center}
\vskip -10mm
\caption{\emph{Left:} Plot of $\tilde S(z)$ for $u=1$ and $z\in[0,
    \rt]$. The points $(z, \tilde S(z))$  remain below  the parabola
    $64z=(1-4\tilde S)^2$.  
The plot was obtained using
  the expansion of $\tilde S(z)$ at order 80 (this is why the divergence of $\tilde
  S'$ at $\rt$ is not very clear on the picture), and the
  estimate $\rt\simeq 0.01032$. \emph{Right:} Plot of $(R(z), S(z))$ for $u=1$
and $z\in[0,   \rho]$, with $\rho\simeq 0.0098$. This curve follows the plot of $\tilde S$, but
stops at the point $(R(\rho),S(\rho))$, for which
$R(\rho)<\rt$.}
\label{fig:Stilde-pos}
\end{figure}

\medskip
Let us now consider the equation~\eqref{R-t} that defines $R$, and prove that it fits in
the smooth implicit function schema of~\cite[Def.~VII.4,
  p.~467-468]{flajolet-sedgewick}. With the notation of this
definition,  $G(z,w)= z+u
\Phi_1(w, \tilde S(w))$. The properties of $\tilde S$ established
above show that $G$ is analytic in $\C\times \{w: |w|<\rt\}$. 
 The characteristic equation $1=G'_w(\rho,\tau)$ does not involve $\rho$ and reads
 \beq\label{cubic-char}
 1= u\, \left(\pd {\Phi_1} x  (\tau ,\tilde S(\tau )) + \,\tilde S'(\tau )\, \pd
 {\Phi_1} y  (\tau ,\tilde S(\tau ))\right).
\eeq
The right-hand side of this equation increases from 0 to $+\infty$ as
$\tau$ goes from $0$ to $r$ (because, as observed above, $\tilde
S'(\rt)=+\infty$). Hence~\eqref{cubic-char} determines a unique
value of $\tau$ in $(0, \rt)$. The equation 
$\tau = G(\rho, \tau)$ gives the value of $\rho$:
\beq\label{rho-val}
\rho=\tau -u \Phi_1(\tau,\tilde S(\tau)).
\eeq
Let $\sigma=\tilde S(\tau)$. The combination
of~\eqref{rho-val}, \eqref{S-t-phi}, \eqref{cubic-char}
and~\eqref{Sprime} proves the properties of $\rho, \tau$ and $\sigma$ stated in
Theorem~\ref{cubique}.

 The rest of the argument is analogous to the end of the proof of
Proposition~\ref{prop:4v-pos}. First, $R$ is irreducible as shown by the first
terms of its expansion at $0$:
$$
R=z+2u(2u+3)z^2+4u(42u^2+63u+10u^3+35)z^3+ O(z^4).
$$
 By Theorem~VII.3 of~\cite[p.~468]{flajolet-sedgewick}, it has
radius $\rho$, and is analytic in a $\Delta$-domain of radius $\rho$. It
takes the value $\tau$ at $\rho$, with  a square root
singularity there. By composition with the series $\tilde S$,
which has  radius $\rt >\tau$, the same properties hold for
$S=\tilde S(R)$, and finally for the series $F'$ given by~\eqref{frs}
(since $\theta(x,y)$ is analytic  in $\R^2$ for $64x<(1-4y)^2$). 

The behaviour of $R$ and $S$ is illustrated in
Figure~\ref{fig:Stilde-pos} (right). 
\end{proof}

\subsection{When $u<0$}

\begin{prop}
  Let $u\in [-1,0)$. The series $R$, $S$ and $F'$ have radius
    $\rho\equiv \rho_u$ given by~\eqref{rho-3}. As $z\rightarrow
    \rho_u^-$, these three series admit an expansion of the
    form~\eqref{exp-3}, with $\beta>0$.  
\end{prop}
\begin{proof}  
As a preliminary remark, recall that $F(z,u)$ is $(u+1)$-positive,
with several combinatorial interpretations described in Section~\ref{sec:tutte}. By
Pringsheim's theorem,  the radius of $F$ is also its smallest
real positive singularity. By Corollary~\ref{positiv}, the same holds
for $R$, $S$ and $\tilde S$. This will be used frequently in the
proof, without further reference to Pringsheim's theorem.

As in the case $u>0$, we proceed in two steps, and study first the
series $\tilde S$ defined by~\eqref{S-t-phi}, and then the series $R$ defined
by~\eqref{R-t}.
Let us begin with $\tilde S$, and apply
Proposition~\ref{lemmeasympt2} with $H(x,y)=y-u\Phi_2(x,y)$. Let us
rule out the possibilities (i), (ii) and (iv).

\noindent
\textbf{(i)} Could $\tilde S\equiv \tilde S(z,u)$ have an analytic
continuation on $(0, +\infty)$? That is,  an
infinite radius of convergence?  Corollary~\ref{positiv}  implies that the radius of  $
\tilde S$ is at most the radius of $\tilde S(z,-1)$, which counts (by
the number of leaves) 
enriched \~S-trees with no flippable edge. Since these trees can have
arbitrary large size (Figure~\ref{fig:peigne}), $
\tilde S(z,-1)$ is not a polynomial. Its coefficients are non-negative
integers, and hence its radius is  at most 1. The same thus holds for $\tilde S(z,u)$.

\begin{figure}[h!]
\begin{center}
\includegraphics[scale=1.2]{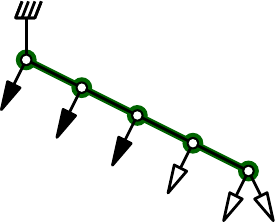}
\end{center}
\caption{A cubic enriched \~S-tree with no flippable edge.}
\label{fig:peigne}
\end{figure}

\noindent
\textbf{(ii)} By Lemma~\ref{positiv-bis}, the series  $\pd {\Phi_2} y (z,\tilde
S(z))$ has non-negative coefficients.  Since its constant term is $0$,
the function $1-u \pd {\Phi_2} y (z,\tilde S(z))$ is increasing on
$[0, \rt)$, with initial value 1: this rules out~(ii).

\noindent
\textbf{(iv)} By Corollary~\ref{positiv}, $\tilde S$ is negative and decreases on 
$[0, \rt)$. Assume that it tends to $ - \infty$. Since $\rt$ is
finite, this implies that
$$ 
\lim_{z\rightarrow \rt^-}\Psi_2 \left( \frac{z} {(1 -4\, \tilde S(z))^2} \right) =
\Psi_2(0)=0.
$$
But then~\eqref{S-t} gives
$$ 
(1+\bu)\,{\tilde S} = -u \sqrt{1-4 \tilde S}/2 + o\left(\sqrt{1-4
  \tilde S} \right)
,$$
which is impossible if $\tilde S \rightarrow -\infty$. 

We conclude that (iii) holds, so that $\Phi_2$ has no analytic continuation at $(\rt,
\tilde S(\rt))$. Given the properties of $\Phi_2$ described in Section~\ref{sec:pp},
this means that 
$$
64 \rt=(1-4\tilde S(\rt))^2.
$$
The radius of $\tilde S$ is at least $\rt$, the value of which we will
determine explicitely later. 
 Figure~\ref{fig:Stilde-neg} shows a plot of $\tilde S$ for
 $u=-1/2$. One can in fact prove that $\rt$ \emm is, the radius of $\tilde
 S$, but we will not use that.

\begin{figure}[h!]
\begin{center}
\includegraphics[scale=0.3]{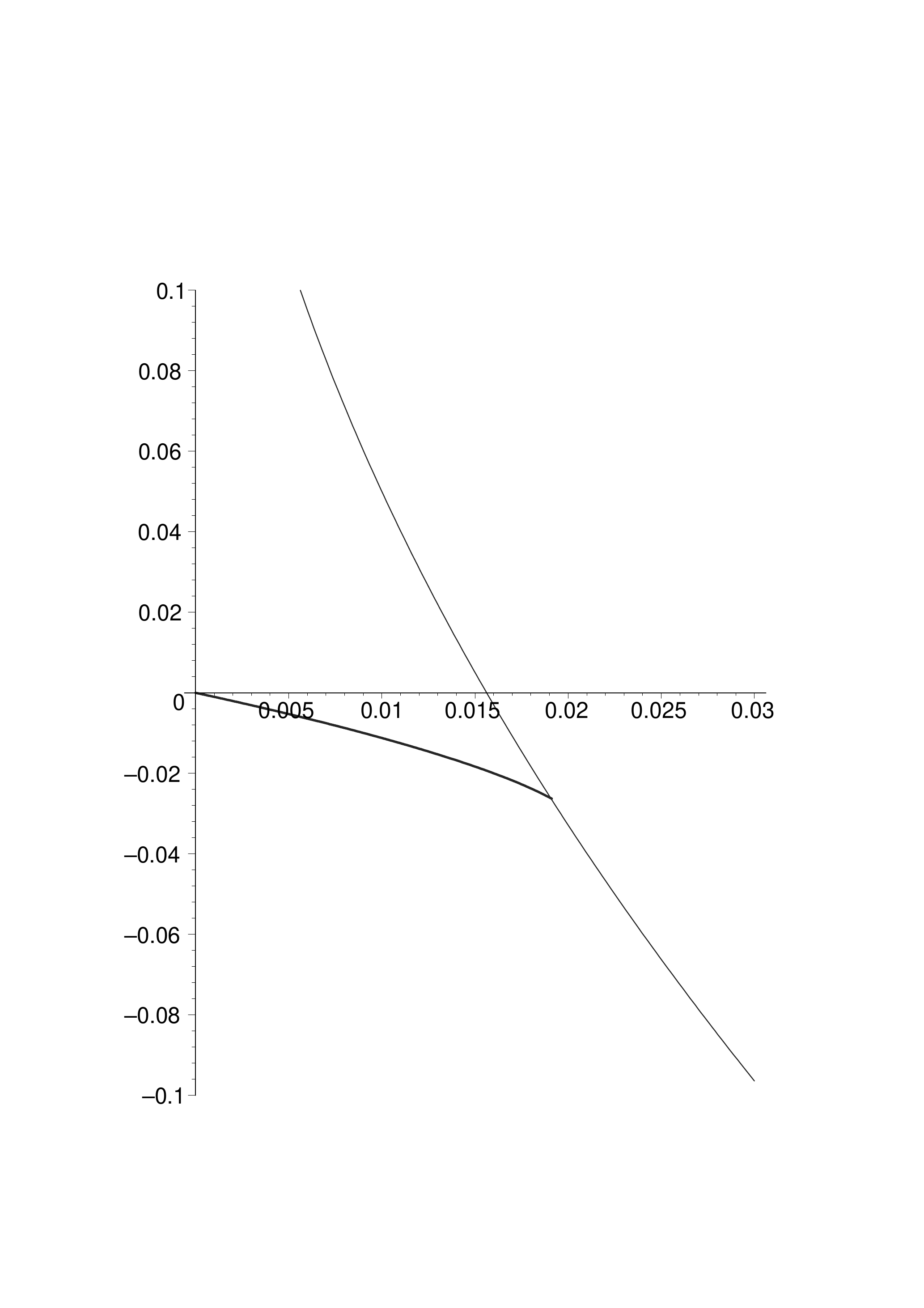}
\end{center}
\vskip -10mm
\caption{A plot of $\tilde S(z)$ for $u=-1/2$ and $z\in[0,
    \rt]$. The curve reaches the parabola
    $64z=(1-4\tilde S)^2$ at $\rt$. The plot was obtained using
  the expansion of $\tilde S(z)$ up to order 25. Plotting the pairs
  $(R(z), S(z))$ for $z\in [0, \rho)$ gives the same curve.}
\label{fig:Stilde-neg}
\end{figure}

\medskip
 Let us now consider the equation~\eqref{R-t} that defines $R$, and apply Corollary~\ref{lemmeasympt} with
 $\Omega(y)=y-u\Phi_1(y, \tilde S(y))$. We have just seen that,
as $y$ goes from $0$ to $\rt$,  the pair $(y, \tilde S(y))$ reaches for the
 first time  the critical parabola at $\rt$. Hence, with the
 notation of Corollary~\ref{lemmeasympt}, the first
 singularity of $\Omega$ on the positive real axis satisfies 
 $\omega\ge \rt$. Let us define $\tau$ and $\rho$ as in Corollary~\ref{lemmeasympt}.

Could it be that $\Omega'(\tau)=0$? By Corollary~\ref{lemmeasympt}, $R(z)$ increases on $(0,
\rho)$ and  $\Omega'(R(z))= 1/R'(z)\ge 0$. So could it be that
$R'(z)$ tends to $+\infty$ as $z$ tends to $\rho$? No: by
Corollary~\ref{positiv}, $\bu(R'(z)-1)$  has non-negative
coefficients, and thus is always larger that its value at $z=0$, which
is $0$. Since $u<0$, this gives $R'(z)\le 1$ on $(0, \rho)$, and we
conclude that $\Omega'(\tau)>0$. Hence $\tau=\omega\ge \rt$.

Since $R(z)$ increases from $0$ to $\omega$ on $[0, \rho]$, there
  exists a unique $\rh$ such that  $R(\rh)=\rt$. 
 Since $\tilde S$ has radius at least $\rt$, the series $S =
  \tilde S( R)$ has also radius at   least $\rh$.
The plot of the  pairs $(R(z), S(z))$ for $z\in [0, \rh]$ coincides
with the plot of    $(z, \tilde S(z))$ for $z\in [0, \rt]$  shown in
Figure~\ref{fig:Stilde-neg}.  

We will now use the  system~(\ref{phi1enpsi}--\ref{phi2enpsi})
defining $R$ and $S$ to   obtain expansions of 
  $R$ and $S$ near $\rh$. These expansions will be found to be singular
  at $\rh$: this implies that $\rh=\rho$ is the radius of $R$ and $S$. 

We adopt the following notation: $z=\rh-x$,  $R(z)=\rt-r$,  $S(z)=S(\rh)-s$  {and}
\beq\label{TRS}
\frac{R(z)}{(1-4S(z))^2}=\frac 1{64}-\vareps.
\eeq
The quantities $x$, $r$, $s$ and $ \vareps$ tend to $0$ as $z$ tends to
$\rh$. 
Let us begin by expanding~\eqref{phi2enpsi} for $z$ close to
$\rh$. Using the expansion~\eqref{exp-psi2} of 
$\Psi_2$ near $1/64$, we obtain
\beq\label{exp1}
a_1+b_1s+c_1\vareps \ln \vareps + d_1\vareps = O(\vareps^2 \ln \vareps ) +
O(s^2)+ O(s\,\vareps \ln \vareps),
\eeq
with
$$
a_1=\frac{1+u}4 \Dc^2 - \frac{u\sqrt 2} \pi \Dc+ \frac{u-1}4,
$$
$$
b_1= - \frac{2u \sqrt 2}{\pi \delta}+1+u, \quad 
c_1= \frac{4\sqrt 2}\pi u \Dc , \quad 
d_1=3c_1,
$$
and $\delta=\sqrt{1-4\tilde S(\rt)}$.
In particular, $a_1$ must vanish, which gives the value of $\Dc$:
\beq\label{delta-c}
\Dc= \sqrt{1-4\tilde  S(\rt)}= \frac{2\sqrt 2 u +
  \sqrt{\pi^2(1-u^2)+8u^2}}{\pi(1+u)}. 
\eeq
(The choice of a minus sign before a square root would give a negative
value, which is impossible for $\Dc=\sqrt{1-4\tilde S(\rt)}$.)
Note that $u$ has  a rational expression in terms of $\Dc$: 
$$
u= -  \frac{\pi(\delta^2-1)}{\pi \delta^2-4 \sqrt 2 \Dc+\pi}.
$$
We will  replace all occurrences of $u$ by this 
expression, to avoid  handling  algebraic coefficients.


Let us now return to the expansion~\eqref{exp1}. Given that
$\Dc>0$, we have $b_1>0$ for $u \in[-1,0)$. Hence $s=O(\vareps \ln  \vareps)$,
and~\eqref{exp1} can be rewritten as
\beq\label{exp2}
b_1s+c_1\vareps \ln \vareps + d_1\vareps = O(\vareps^2 \ln^2 \vareps ).
\eeq
Let us now move to~\eqref{TRS}. Using $\rt= \Dc^4/64$, it gives
\beq\label{exp3}
 b_2 s + d_2\vareps + e_2 r = O( \vareps^2 \ln^2 \vareps )
\eeq
with $ b_2=-8 \Dc^2, d_2= 64 \Dc^4 , e_2=-64$.
Finally, the equation~\eqref{phi2enpsi} that defines $R$ gives
\beq\label{exp4}
a_3+b_3s + c_3 \vareps \ln \vareps + d_3  \vareps + e_3 r +f_3 x
=O(\vareps^2 \ln ^2 \vareps ),
\eeq
where, in particular,
$$ 
a_3=96(\pi \Dc^2 -4\sqrt 2 \Dc+\pi)\rh+\Dc^3(2\sqrt2\Dc^2-3\pi\Dc
+4\sqrt 2).
$$
Since $a_3$ must vanish, we obtain  a rational expression of $\rh$ in
terms of $\Dc$, and then, using~\eqref{delta-c}, an explicit expression which
coincides with~\eqref{rho-3}. We do not give here the
expressions of $b_3, c_3, d_3, e_3$ and $f_3$, which are rational
in $\Dc$. They are easy to compute. Let us just mention that
$f_3\not =0$. 

Now, using~\eqref{exp2},~\eqref{exp3} and~\eqref{exp4} in this order, we
obtain for $s$, $r$ and finally $x$ expansions in $\vareps$ of the form
\begin{eqnarray}
s&= & c_4\,\vareps  \ln \vareps + d_4\,\vareps + O(\vareps^2 \ln ^2
\vareps ),
\label{s-eps}
\\
r&=&  c_5\,\vareps  \ln \vareps + d_5\,\vareps + O(\vareps^2 \ln ^2
\vareps ),
 \label{r-eps}\\
x&=&  c_6\,\vareps  \ln \vareps + d_6\,\vareps+  O(\vareps^2 \ln
^2\vareps ).
\label{x-eps}
\end{eqnarray}
In particular, $c_6\not = 0$ for $u \in [-1,0)$ and the latter equation gives
$
x\sim c_6 \,\vareps  \ln \vareps,
$
so that $\ln x \sim \ln \vareps$ and thus
\beq \label{eps-x}
\vareps = \frac x{c_6\ln x} \left(1+ o(1)\right).
\eeq
To conclude, we use~\eqref{x-eps} to express $\vareps  \ln \vareps $
as a linear combination of $x$ and $\vareps$ (plus $O()$ terms), and~ \eqref{eps-x}
 to express $\vareps $ in terms of $x$. This replaces~\eqref{s-eps}
 and~\eqref{r-eps} by 
\begin{eqnarray*}
s&= & \frac{c_4}{c_6} x + \frac{d_4c_6-c_4d_6}{c_6^2} \frac x{\ln x} (1+o(1)),
\\
r&=&  \frac{c_5}{c_6} x + \frac{d_5c_6-c_5d_6}{c_6^2} \frac x{\ln x}
(1+o(1)) .
\end{eqnarray*}
These equations, written explicitely, read
\begin{eqnarray*}
S(z)&=& \frac {1-\Dc^2}4+
\frac{4\pi}{\Dc\sqrt{\pi^2(1-u^2)+8u^2}}
(\rh -z)
- \frac{2\sqrt 2\pi}{u\Dc}\frac{
  \rh -z}{\ln     (\rh -z)}\left(1+o(1)\right),
\\
R(z)&=& \rt   - \frac{\pi \Dc}{2\sqrt{\pi^2(1-u^2)+8u^2}}(\rh-z) - 
\frac{\sqrt 2 \pi \Dc}{4u} \frac{
  \rh -z}{\ln(\rh-z)}\left(1+o(1)\right),
\end{eqnarray*}
as $z\rightarrow \rh$.
In particular, $R$ and $S$ are singular at $\rh$, so that $\rh=\rho$
is their common radius.

Using~\eqref{F-cubic}, we finally compute an expansion of $F'(z)$ near $\rho$,
which gives~\eqref{exp-3}. The coefficient $\beta$ of
$(\rho-z)/\ln(\rho-z)$ does not vanish on $[-1,0)$, 
  and $F'$ has radius $\rho$ as well.
\end{proof}

\section{Final comments}
\label{sec:final}

\subsection{Universality}
Our asymptotic results remain incomplete when $p=3$, as we have not
been able to obtain the asymptotic behaviour of $f_n(u)$ for negative
values of $u$ (but only the singular behaviour of $F'(z,u)$). We still
expect $f_n(u)$ to behave like $c_u \rho_u^{-n} n^{-3} (\ln n)^{-2}$,
as in the 4-valent case.

We have also examined general even values of $p$. As explained below
Theorem~\ref{thm:equations}, the series $S$ vanishes, so that we only deal with one
equation (in $R$). When $p=6$ for instance, it reads:
$$
R= z+u\Phi(R)= z+u \sum_{\ell \ge 1} \frac{(5\ell)!}{\ell! (4\ell+1)!}
R^{2\ell+1}.
$$
New difficulties arise from the periodicity of $\Phi$ and $R$, but we
still expect the same behaviour for the numbers $f_n(u)$, even though
$R$ and $F$ will have multiple singularities on their circle of
convergence.

We also plan the study of general (non-regular) forested maps.

\subsection{A differential equation involving $F$, rather than $F'$?}
The two differential equations (DEs) obtained for the series $F$ in
Section~\ref{sec:de}, for the 4-valent, and then for the cubic
case, are in fact equations  of order 2 satisfied by $F'$. It is 
natural to ask if $F$ itself satisfies a DE of order 2. Let us examine
in detail the case $p=4$.

Returning to Lemma~\ref{lem:cont}, we first need an expression
of $\bar M$. Since $t_{2i+1}=t^c_{2i+1}=0$ when $p=4$, we can content
ourselves with an expression of $\bar M$ valid  when $g_{2i+1}=0$ for all $i$. Such an
expression is easily obtained from the expression~\eqref{relM} of
$\bar M'_z$. Indeed,  $S=0$ in the even case, and the
equations~\eqref{relM} and~\eqref{relS}, written as
$$
\bar M'_z= \bar \theta(R), \quad R=z+u\bar \Phi(R),
$$
imply at once
$$
\bar M= \bar \Psi(R)
$$
where
\begin{eqnarray*}
\bar \Psi(x)&=& \int \bar \theta(x) \left( 1-u \bar \Phi'(x)\right) dx
\\
&=& \sum_{i\ge 1} h_{2i}  {2i \choose i} \frac{x^{i+1}}{i+1} -u  \sum_{i\ge 1, j \ge 0}
h_{2i} g_{2j+2} ( {2j+1} ){2i \choose i}{2j\choose j} \frac {x^{i+j+1}}{i+j+1}.
\end{eqnarray*}
This should be compared to Eq.~(1.4)in~\cite{bg-continuedfractions},
which reads, in the even case,
$$
 \bar M= \sum_{n\ge 1} h_{2n} {2n \choose n}\frac{ R^{n+1}}{n+1} -u
 \sum_{n\ge 1, q\ge 0, k>q}h_{2n} g_{2k} {2n+2q \choose n+q} {2k-2q-2 \choose k-q-1}\frac{ R^{n+k}}{n+q+1}.
$$
Our (simpler) expression is obtained by summing over $q$. 

Hence for $p=4$,  Lemma~\ref{lem:cont} gives
\beq\label{Fzu-expl}
F(z,u)= 4\sum_{i\ge 2}\frac{(3i-3)!}{(i-2)! i!^2} \frac{R^{i+1}}{i+1}
 -u  \sum_{i\ge 2, j \ge 1}\frac{(3i-3)!}{(i-2)!
   i!^2}\frac{(3j)!}{j!^3} \frac {R^{i+j+1}}{i+j+1}
=\Psi(R),
\eeq
where $\Psi(x)= \Psi_1(x)- u \Psi_2(x)$,
$$
\Psi_1(x) \int \theta(x), \quad   \quad \Psi_2(x) =\int \theta(x)  \Phi'(x) dx,
$$
and  now $R$ is defined by $R=z+u\Phi(R)$, where $\Phi$ is given
by~\eqref{theta-phi-4V}. 

Now assume that $F$ is differentially algebraic of order 2: there
exists a non-zero polynomial  $P$ such that 
$$
P( F, F', F'', z, u)=0.
$$
Equivalently,
$$
P(\Psi(R), \theta(R), R'\theta'(R),z,u)=0.
$$
Using $z=R-u\Phi(R)$, $R'=(1-u\Phi'(R))^{-1}$ and the
equations~\eqref{phi-second} and~\eqref{thetrat} that relate $\theta$,
$\Phi$, and their derivatives, we conclude that 
either $\Psi(x)$ is algebraic over $\Q(x, u, \Phi(x), \Phi'(x))$, or
$\Phi$ and $\Phi'$ are algebraically related over $\Q(x)$. Let us examine these two possibilities. 

\medskip
\noindent 
1. Can  $\Psi(x)$  be algebraic over $\Q(x, u, \Phi(x), \Phi'(x))$? 
Given that
$$
15 \Psi_1(x)= 15 \int \theta(x)dx=  54 x^2-2(1+81x) \Phi(x)
+8x(27x-1) \Phi'(x)
$$
and
$$
3\Psi_2(x)= 3\int \theta(x) \Phi'(x) dx= 12 x \Phi(x)
-2(1-27x)\Phi(x) \Phi'(x) -48\Phi^2(x) + 12 \int \frac{\Phi^2(x)}{x} dx
,
$$
this is equivalent to saying that $\int \Phi(x)^2/x dx$ is algebraic over
$\Q(x,\Phi(x), \Phi'(x))$. Or, using~\eqref{F-hg}, that the hypergeometric
function
$$
f(x)=\  _2F_1\left(\frac 1 3,\frac 2 3;2;x\right) 
$$
is such that $g(x):= \int xf^2(x)dx$ is algebraic over $\Q(x, f(x) ,
f'(x))$ (here, we use the fact that 
$$
\left. 20 \int xf(x) dx= 9 x^2 f(x) +9x^2(1-x) f'(x).\right)
$$
A related question is whether $g$ is a linear combination of $
f^2, ff', (f')^2$. Given that
$$
2f(x)+18(x-1)f'(x)+9(x-1)f''(x)=0,
$$
the vector space spanned over $\Q(x)$ by these 3 series
contains all products $f^{(i)} f^{(j)}$ and is closed by differentiation. This would imply that $g$
satisfies a linear DE of order 4 with coefficients in
$\Q(x)$. Starting from the order 4 DE satisfied by $g'$,
$$
-4g'(x)+8x(x-1)g''(x) +27x(x-1)^2g^{(3)}(x) +9x^2(x-1)^2g^{(4)}(x)=0,
$$
the Maple command {\tt ode\_int\_y} tells us that $g$ satisfies no
linear DE of order 4. Following discussions with Alin Bostan and Bruno
Salvy, this seems to imply actually that $g$ is not algebraic over
$\Q(x,f,f')$. 

\medskip
\noindent 
2. Now could it be that $F'$ satisfies a DE of order 1? This would imply
that $\Phi$ and $\Phi'$ are algebraically linked over $\Q(x)$, or,
equivalently, that $f$ and $f'$ are algebraically linked over
$\Q(x)$. One can prove that this is not the case, by  combining the
fact that $f'(x)$ diverges at 1 like $\ln(1-x)$, while $f(1)=\sqrt 3
/(12 \pi)$  is finite and transcendental.

These considerations lead us to believe that we have found in Section~\ref{sec:de4} the
equation of minimal order satisfied by $F$.

\medskip\noindent{\bf Acknowledgements.}
 We are grateful to Yvan Le Borgne and Andrea Sportiello for interesting
discussions on this problem, and to Alin Bostan and Bruno Salvy for
their help with hypergeometric series.


\bibliographystyle{plain}
\bibliography{coloured}

\end{document}